\theoremstyle{plain}
\newtheorem{lemma}{Lemma}[section]
\newtheorem{proposition}[lemma]{Proposition}
\newtheorem{corollary}[lemma]{Corollary}
\newtheorem{theorem}[lemma]{Theorem}
\newtheorem{remark}[lemma]{Remark}
\newtheorem{example}[lemma]{Example}
\newtheorem*{ack}{Acknowledgements}
\DeclareMathOperator{\Ric}{Ric}
\newcommand{\Lie}[1]{\operatorname{\textsl{#1}}}
\newcommand{\lie}[1]{\operatorname{\mathfrak{#1}}}
\newcommand{\SO}{\Lie{SO}}
\newcommand{\Un}{\Lie{U}}
\newcommand{\un}{\lie{u}}
\newcommand{\Gtwo}{\ifmmode{{\rm G}_2}\else{${\rm G}_2$}\fi}
 \newcommand{\cyclic}{\mathop{\kern0.9ex{{+}\kern-2.2ex\raise-.28ex\hbox{\Large\hbox
 {$\circlearrowright$}}}}}
\newcommand{\inp}[2]{\langle #1, #2\rangle}
\newcommand{\Wc}[1]{\mathcal W_{#1}}
\newcommand{\Kah}{\mathcal K}
\DeclareMathOperator{\Cur}{\mathcal K}
\def\sideremark#1{\ifvmode\leavevmode\fi\vadjust{\vbox to0pt{\vss
 \hbox to 0pt{\hskip\hsize\hskip1em
 \vbox{\hsize2.5cm\tiny\raggedright\pretolerance10000
 \noindent #1\hfill}\hss}\vbox to8pt{\vfil}\vss}}}%
\newfont{\eusm}{eusm10 scaled \magstep1}
\newfont{\eusmiii}{eusm10 scaled \magstep3}
\newcommand{\comp}{\makebox[7pt]{\raisebox{1.5pt}{\tiny $\circ$}}}
\newcommand{\trace}{\mathop{\rm trace}}
\title{Harmonic G-structures}
\author{J.~C.~Gonz{\'a}lez~D{\'a}vila}
\address[J.~C.~Gonz{\'a}lez~D{\'a}vila]{Department of Fundamental Mathematics\\
  University of La Laguna\\ 38200 La Laguna, Tenerife, Spain}
\email{jcgonza@ull.es}
\author{F.~Mart\'\i n~Cabrera}
\address[F.~Mart\'\i n~Cabrera]{Department of Fundamental Mathematics\\
  University of La Laguna\\ 38200 La Laguna, Tenerife, Spain}
\email{fmartin@ull.es}
\date{\today}
\begin{document}
\maketitle

\begin{abstract}{\indent}
For closed and connected subgroups $G$ of $\SO(n)$, we study the
energy functional on the space of $G$-structures of a (compact)
Riemannian manifold $(M,\langle \cdot, \cdot \rangle),$ where
$G$-structures are considered as sections of the quotient bundle
${\mathcal S\mathcal O}(M)/G.$ Then, we deduce the corresponding
first and second variation formulae and the characterising
conditions for critical points by means of tools closely  related
with the study of $G$-structures. In this direction, we show the
r\^{o}le in the energy functional played by the intrinsic torsion
of the $G$-structure. Moreover, we analyse the particular case
$G=\Lie{U}(n)$ for even-dimensional manifolds. This leads to the
study of harmonic almost Hermitian manifolds and harmonic maps
from $M$ into ${\mathcal S\mathcal O}(M)/U(n).$ \vspace{4mm}

 \noindent {\footnotesize \emph{Keywords and phrases:} $G$-structure,
  intrinsic torsion, minimal connection, almost
Hermitian manifold, harmonic $G$-structure, harmonic almost
Hermitian structure, harmonic map } \vspace{2mm}

\noindent {\footnotesize \emph{2000 MSC}: 53C10, 53C15,  53C25 }
\end{abstract}

\tableofcontents

\section{Introduction}{\indent}
The energy of a map between Riemannian manifolds is a functional
which  has been widely studied by diverse authors
\cite{EeLe1,EeLe2,Ur}. Critical points for the energy functional
are called \emph{harmonic maps} and have been characterised by
Eells and Sampson \cite{EeSa} as maps with vanishing \emph{tension
field}.

For a Riemannian manifold $(M,\langle \cdot , \cdot \rangle)$, we
denote by $(T_1 M, \langle \cdot , \cdot \rangle^S)$ its unit
tangent bundle equipped with the Sasaki metric $\langle \cdot ,
\cdot \rangle^S$ (see \cite{Sak}). Looking at  unit vectors fields
as maps $M \to T_1 M$, if $M$ is  compact and oriented,
 one  can consider the energy  functional as defined on the set
$\mathfrak X_1(M)$ of unit vector fields. Critical points for this
functional give rise to the notion of \emph{harmonic unit vector
field}. The condition characterising harmonic unit vector fields
has been obtained by Wiegmink \cite{Wie1} (see also Wood's paper
\cite{Wood}). This has been also extended in a natural way to
sections of sphere  bundles (see \cite{GMS}, \cite{Salvai}) and to
oriented distributions, considered as sections of the
corresponding Grassmann bundle \cite{GGV}.

In \cite{Wood2}, for principal $G$-bundles $Q \to M$ over a
Riemannian manifold $(M, \langle \cdot , \cdot \rangle)$, Wood
considers global sections $\sigma \,: \; M \to Q/H$ of the
quotient bundle $\pi : Q/H \to M$, where $H$ is a closed subgroup
of $G$ such that $G/H$ is reductive. Note that such global
sections are in one-to-one correspondence with the $H$-reductions
of the $G$-bundle $Q \to M$. Likewise, a connection on $Q \to M$
and a $G$-invariant metric on $G/H$ are fixed. Thus, $Q/H$ can be
equipped in a natural way with a metric $\langle \cdot , \cdot
\rangle_{Q/H},$ defined by using the metrics on $M$ and $G/H$. For
such a metric on $Q/H$, the submersion $\pi : Q/H \to M$ is
Riemannian and has totally geodesic fibres. In such conditions,
harmonic sections are characterised as those with vanishing
vertical tension field. This situation arises when the Riemannian
manifold $M$ is equipped with some additional geometric structure,
viewed as reduction of the structure group of the tangent bundle.

In this paper, we consider the particular situation for
$\Lie{G}$-structures defined on an oriented Riemannian
$n$-manifold  $(M ,\langle \cdot , \cdot \rangle)$, where
$\Lie{G}$ is a closed and connected subgroup of $\SO(n).$ The
manifold $M$ is said to be equipped with a $\Lie{G}$-structure if
its oriented orthogonal frame bundle $\mathcal{SO}(M)$ admits a
reduction $\mathcal{G}(M)$ to the subgroup $\Lie{G}$. Moreover, if
$\mathcal{SO}(M)/G = \mathcal{SO}(M) \times_G \Lie{SO}(n)/ G$ is
the quotient bundle under the action of $G$ on $\mathcal{SO}(M),$
the existence of a $G$-structure on $M$ is equivalent to the
existence of a global section $\sigma : M \to \mathcal{SO}(M)/G.$
In the present work, we analyse the energy functional defined on
the space of sections $\Gamma^{\infty} (\mathcal{SO}(M)/G)$ of the
quotient bundle. Thus, if  $\xi^G$ denotes the intrinsic torsion
of the $G$-structure,   we clearly shows the central r\^{o}le played
by $\xi^G$ in the energy functional.
 (Theorem \ref{siginttor}). Furthermore, the first
variation formula is deduced (Theorem \ref{firstvar}). Then, we
show several equivalent characterising conditions  of critical
points for the energy functional on the space of $G$-structures
defined on $(M ,\langle \cdot , \cdot \rangle)$ (Theorem
\ref{carharm}). This gives rise to the notion of \emph{harmonic
$G$-structure} for general Riemannian manifolds, not necessarily
compact and oriented. It is worthwhile to note that harmonic
$G$-structures are not necessarily critical for the energy
functional on all maps from $(M,\langle\cdot,\cdot\rangle)$ to
$(\mathcal{SO}(M)/G,\langle\cdot,\cdot\rangle_{\mathcal{SO}(M)/G})
.$ They are harmonic maps when the corresponding harmonic
$G$-structures satisfy a condition involving the curvature of the
Riemannian manifold. Additionally, we deduce the second variation
formula (Theorem \ref{secondvar}).

We point out that because the intrinsic torsion of the
$G$-structures is involved in all results above mentioned, this
makes possible going further in the study of relations between
harmonicity and classes of $G$-structures. This will be
illustrated in  Section \ref{sect:almherm}, where we focus
attention  on the  study of harmonic almost Hermitian structures
initiated by Wood in \cite{Wood1,Wood2}. Thus, we study
harmonicity of almost Hermitian structures by using the tools
developed in Section \ref{charactgstruc}, recovering Wood's
results and proving some additional ones. In Theorem
\ref{characharmherm1}, several equivalent characterising
conditions  for harmonic almost Hermitian structures are shown.
The relation of harmonicity with Gray-Hervella's classes of almost
Hermitian structures is studied in Theorem \ref{classhermharm}.
Note that the results there contained characterise harmonic almost
almost Hermitian structures by means of conditions on the
Riemannian curvature.
 Concretely, in terms of the particular Ricci tensor $\Ric^*$ determined
 by the almost Hermitian structure.
 Finally, we point out that Theorem \ref{classhermharm}, in some sense,   generalises the
 results
  proved by  Bor et al.
   \cite{BHLS} (see Theorem \ref{bor-Hlam-Salva}). In fact,
   note that the results in \cite{BHLS} are stated
  for conformally flat manifolds, i.e.,  Weyl curvature tensor vanished.

After these remarks, we focus  attention  on the study of
harmonicity as a map of almost Hermitian structures. Results in
that direction were already  obtained by Wood \cite{Wood2}. Here
we complete such results by using tools here presented.

 For completeness, we
finish this paper by briefly giving  a detailed and self-contained
explanation of the situation for nearly K\"{a}hler manifolds.
Thus, we will recover results already known originally proved,
some of them, by Gray and, others, by Wood. However, we will
display alternative proofs in terms of the intrinsic torsion.
Additionally, it is also shown a Kirichenco`s result \cite{Kir}
saying that, for nearly K\"{a}hler manifolds,  the intrinsic
torsion is parallel with respect to the minimal connection.

\begin{ack} {\rm  The authors are are supported by  grants from
MEC (Spain), projects MTM2004-06015-C02-01 and  MTM2004-2644. }
\end{ack}

\section{Preliminaries}{\indent} First we recall some notions relative to
$\Lie{G}$-structures, where $\Lie{G}$ is a subgroup of the linear
group $\Lie{GL}(n , \mathbb R)$. The Lie algebra of $G$ will be
denoted by $\mathfrak{g}$. An $n$-dimensional manifold $M$ is
equipped with a $\Lie{G}$-structure if its frame bundle admits a
reduction $\mathcal{G}(M)$ to the subgroup $\Lie{G}$.  Moreover,
if $(M ,\langle \cdot , \cdot \rangle)$ is an $n$-dimensional
oriented Riemannian manifold, we can consider the principal
$SO(n)$-bundle $\pi_{\Lie{SO}(n)} :  \mathcal{SO}(M) \to M$ of the
oriented orthonormal frames with respect to the metric $\langle
\cdot, \cdot \rangle.$ A $G$-structure on $(M ,\langle \cdot ,
\cdot \rangle)$ is a reduction $\mathcal{G}(M)\subset
\mathcal{SO}(M)$ to a subgroup $\Lie{G}$ of $\SO(n).$

In what follows, we always assume that $G$ is closed and also,
connected. Then, the quotient space $SO(n)/G$ is a homogeneous
manifold and it becomes into a normal homogeneous Riemannian
manifold with bi-invariant metric induced by the inner product
$\langle \cdot,\cdot \rangle$ on ${\mathfrak s \mathfrak o}(n)$
given by $\langle X,Y\rangle  = -\trace XY,$ the natural extension
of the usual Euclidean product $\langle \cdot , \cdot \rangle$ on
$\mathbb{R}^n$ to $\mbox{End}(\mathbb{R}^n).$ Let ${\mathcal
S\mathcal O}(M)/G$ be the orbit space under the action of $G$ on
${\mathcal S\mathcal O}(M)$ on the right as subgroup of $SO(n).$
Then the $G$-orbit map $\pi_{G}:{\mathcal S\mathcal O}(M)\to
{\mathcal S\mathcal O}(M)/G$ is a principal $G$-bundle and we have
$\pi_{SO(n)} = \pi\comp \pi_{G},$ where $\pi: {\mathcal S\mathcal
O}(M)/G\to M$ is a fibre bundle with fibre $SO(n)/G,$ which is
naturally isomorphic to the associated bundle ${\mathcal S\mathcal
O}(M)\times_{SO(n)}SO(n)/G.$ The map $\sigma:M\to {\mathcal
S\mathcal O}(M)/G$ given by $\sigma(m) = \pi_{G}(p),$ for all
$p\in {\mathcal G}(M)$ with $\pi_{SO(n)}(p) = m,$ is well-defined
because $\pi_{G}$ is constant on each fiber of the reduced bundle.
It is a smooth section and we have ${\mathcal G}(M) =
\pi^{-1}_{G}(\sigma(M)).$ Hence, there is a one-to-one
correspondence between the totally of $G$-structures and the
manifold $\Gamma^{\infty}({\mathcal S\mathcal O}(M)/G)$ of all
global sections of ${\mathcal S\mathcal O}(M)/G.$ In what sequel,
we shall also denote by $\sigma$ the $G$-structure determined by
the section $\sigma.$


If $u_{1}= (1,0,\dots,0),\dots ,u_{n} = (0 ,\dots ,0,1)$ is the
canonical orthonormal frame  on $\mathbb{R}^n$, then an oriented
frame $p \in \mathcal{SO}(M)$ can be viewed as an isomorphism $p :
\mathbb{R}^n \to \mbox{\rm T}_{\pi_{\Lie{SO}(n)}(p)}M$ such that
$\{ p(u_1), \dots , p(u_n)\}$ is a positive oriented basis of
$T_{\pi_{\Lie{SO}(n)}(p)}M$. From now on, we will make reiterated
use of the {\it musical isomorphisms} $\flat : \mbox{\rm T}M \to
\mbox{\rm T}^* M$ and $\sharp : \mbox{\rm T}^* M \to \mbox{\rm T}
M$, induced by the metric $\langle \cdot , \cdot \rangle$ on $M$,
respectively defined by $X^{\flat} = \langle X , \cdot \rangle$
and $\langle \theta^{\sharp} , \cdot \rangle = \theta $.

In the presence of  a $G$-structure determined by a section
$\sigma  :  M \to \mathcal{SO}(M)/G$, a frame $p \in
\mathcal{SO}(M)$ is said to be an {\it adapted frame}  to the
$G$-structure, if $p \in \sigma \circ \pi_{\Lie{SO}(n)}(p)$ or,
equivalently,  if $p \in \mathcal{G}(M) \subseteq
\mathcal{SO}(M)$. Note also that,  in a first instance, the bundle
of endomorphisms $\mbox{End} (\mbox{T} M )$ on the fibers in the
tangent bundle $\mbox{T} M$ coincides with the associated vector
bundle $\mathcal{SO}(M) \times_{\Lie{SO}(n)}
\mbox{End}(\mathbb{R}^n)$, where $\Lie{SO}(n)$ acts on
$\mbox{End}(\mathbb{R}^n)$ in the usual way $(g \cdot \varphi)(x)
= g \varphi (g^{-1} x)= (\mbox{Ad}_{\Lie{SO}(n)}(g)\varphi)(x)$.
Thus, it is identified
\begin{equation}
\label{matrix1}
 \varphi_{m} = a_{ji} \, p(u_i)^{\flat} \otimes p(u_j) \cong [(p, a_{ji} \, u_i^{\flat}
\otimes u_j)],
\end{equation}
where $m \in M$ and $p \in \pi^{-1}_{\Lie{SO}(n)}(m)$ and  the
summation convention is used. Such a convention will be followed
in the sequel. When a risk of confusion appear, the sum will be
written in detail.

 In our context, we have also  a reduced subbundle
$\mathcal{G}(M)$. So that we can do the identification $\mbox{End}
(\mbox{T} M ) = \mathcal{G}(M) \times_{G}
\mbox{End}(\mathbb{R}^n)$ because any $\varphi_m$ can be
identified with an element in $\mathcal{G}(M) \times_{G}
\mbox{End}(\mathbb{R}^n)$ as in Equation \eqref{matrix1}, but in
this case must be $p \in \sigma(m)$.

 Now we restrict our attention to the
subbundle $\lie{so}(M)$ of $\mbox{End} (\mbox{T} M )$ of
skew-symmetric endomorphisms $\varphi_m$, for all $m \in M$, i.e.,
$\langle \varphi_m X , Y \rangle= -\langle \varphi_m Y , X
\rangle$. Note that this subbundle $\lie{so}(M)$ is expressed as $
\lie{so}(M) = \mathcal{SO}(M) \times_{\Lie{SO}(n)} \lie{so}(n) =
\mathcal{G}(M) \times_{G} \lie{so}(n) $.  The corresponding
matrices $(a_{ij})$ for $\lie{so}(M)$, given by Equation
\eqref{matrix1}, are such that $a_{ij} =- a_{ji}$. Furthermore,
because $\lie{so}(n)$ is decomposed into the $G$-modules $\lie{g}$
and the orthogonal complement $\lie{m}$ on $\lie{so}(n)$ with
respect to the inner product $\langle\cdot,\cdot\rangle,$ the
bundle $\lie{so}(M)$ is also decomposed into $\lie{so}(M) =
\lie{g}_{\sigma}(M) \oplus \lie{m}_{\sigma}(M) $, where
$\lie{g}_{\sigma}(M) = \mathcal{G}(M) \times_G \lie{g}$ and
$\lie{m}_{\sigma}(M) = \mathcal{G}(M) \times_G \lie{m}$. The
matrices $(a_{ij})$ in Equation \eqref{matrix1} corresponding to
$\lie{g}_{\sigma}(M)$ and $\lie{m}_{\sigma}(M) $ are such that
they are in $\lie{g}$ and $\lie{m}$, respectively. The subindex
$\sigma$ in $\lie{g}_{\sigma}(M)$ and $\lie{m}_{\sigma}(M) $ is to
point out that these bundles are determined by the $G$-structure
$\sigma$. From now on, we will merely write $\lie{g}_{\sigma}$ and
$\lie{m}_{\sigma}$.

Under the conditions above fixed, if $M$ is equipped with a
$G$-structure, then there exists a $G$-connection
$\widetilde{\nabla}$ defined on $M$. Doing the difference
$\widetilde{\xi}_X = \widetilde{\nabla}_X - \nabla_X$, where
$\nabla_X$ is the Levi-Civita connection of $\langle \cdot , \cdot
\rangle$, a tensor $\widetilde{\xi}_X \in \lie{so}(M)$ is
obtained. Because $\nabla$ is torsion-free, $\widetilde{\xi}$ is
an alternative way of giving the torsion of $\widetilde{\nabla}$.
In fact, if $\widetilde{T}$ is the usual torsion tensor of
$\widetilde{\nabla}$ given by $\widetilde{T}(X,Y) =
\widetilde{\nabla}_X Y - \widetilde{\nabla}_Y X -[X,Y]$, then it
is satisfied
\begin{equation} \label{tortor}
\begin{array}{l}
 \widetilde{T}(X,Y) = \widetilde{\xi}_X Y -
\widetilde{\xi}_Y X, \\[1mm]
2 \langle \widetilde{\xi}_X Y , Z \rangle = \langle
\widetilde{T}(X,Y), Z \rangle - \langle \widetilde{T}(Y,Z), X
\rangle + \langle \widetilde{T}(Z,X), Y \rangle.
\end{array}
\end{equation}
 Decomposing $\widetilde{\xi}_X = (
\widetilde{\xi}_X )_{\lie{g}_{\sigma}} + ( \widetilde{\xi}_X
)_{\lie{m}_{\sigma}}$, $( \widetilde{\xi}_X )_{\lie{g}_{\sigma}}
\in \lie{g}_{\sigma}$ and $( \widetilde{\xi}_X
)_{\lie{m}_{\sigma}} \in \lie{m}_{\sigma}$, a new $G$-connection
$\nabla^G$, defined by $\nabla^G_X = \widetilde{\nabla}_X -
(\tilde{\xi}_X )_{\lie{g}_{\sigma}}$, can be considered. Because
the difference between two $G$-connections must be in
$\lie{g}_{\sigma}$, $\nabla^G$ is the unique $G$-connection on $M$
such that its torsion satisfies the condition $\xi^G_X = (
\widetilde{\xi}_X )_{\lie{m}_{\sigma}} = \nabla^{G}_X - \nabla_X
\in \lie{m}_{\sigma}$. $\nabla^G$ is called the {\it minimal
connection} and $\xi^G$ is referred as the {\it intrinsic torsion}
of the $G$-structure $\sigma$ \cite{CleytonSwann:torsion,Salamon}.
A natural way of classifying $G$-structures arises by decomposing
of the space $\mbox{T}^* M \otimes \lie{m}_{\sigma}$ of possible
intrinsic torsions into irreducible $G$-modules. If $\xi^G=0$, the
$G$-structure is usually referred as a {\it parallel} (or {\it
integrable}) $G$-structure. In such a case, the Riemannian
holonomy group of $M$ is contained in $G$.

Associated to the metric connections $\nabla$ and $\nabla^G$ there
are connections one-forms $\omega$ and $\omega^G$  defined on
$\mathcal{SO}(M)$ and $\mathcal{G}(M)$ with values in
$\lie{so}(n)$ and $\lie{g}$, respectively. Note that the
projection $\mathcal{G}(M) \to M$ of the reduced bundle is
$\pi_{\Lie{SO}(n)}$ restricted to $\mathcal{G}(M)$. Therefore, if
$\wp=\{e_1, \dots e_n\} \, : \, U \to \mathcal{G}(M) $ is a local
frame field adapted to the $G$-structure, then
$$
\langle \xi^G_{X} e_i , e_j \rangle_{m} = \langle \nabla^G_{X} e_i
, e_j \rangle_{m} - \langle \nabla_{X} e_i , e_j \rangle_{m} =
\omega^G_{\wp(m)} ( \wp_{\ast m} X )_{ji} - \omega_{\wp(m)} (
\wp_{\ast m} X )_{ji}.
$$
Since the matrices $(\langle \xi^G_{X} e_i , e_j \rangle_{m}) \in
\lie{m}$ and $(\omega^G_{\wp(m)} ( \wp_{\ast m} X )_{ji})\in
\lie{g}$, it is obtained the following identities for matrices
$$
( (\omega_{\wp(m)}( \wp_{\ast m} X)_{ji} )_{\lie{g}} =
(\omega^G_{\wp(m)} ( \wp_{\ast m} X )_{ji}), \qquad
(\omega_{\wp(m)}( \wp_{\ast m} X  )_{ji})_{\lie{m}} = - \left(
\langle \xi^G_{X} e_{i} , e_{j}
 \rangle_{m}\right).
$$
Therefore, the intrinsic torsion is expressed as
\begin{equation} \label{inttorome}
\xi_X^G = - (\omega ( \wp_{\ast } X )_{ji})_{\lie{m}} \,
e_{i}^{\flat} \otimes e_j,
\end{equation}
where $\wp = \{ e_1 , \dots e_n \}$ is a local  frame field
adapted to the $G$-structure.

Finally, we need to point out that,  along the present paper,   we
will consider the  natural extension of the metric $\langle \cdot
, \cdot \rangle$ to $(r,s)$-tensor fields on $M$. Such an
extension is defined by
\begin{equation} \label{extendedmetric}
\langle \Psi,\Phi \rangle = \Psi^{i_{1}\dots i_{r}}_{j_{1}\dots
j_{s}} \Phi^{i_{1}\dots
 i_{r}}_{j_{1}\dots j_{s}},
  \end{equation}
where $\Psi^{i_{1}\dots i_{r}}_{j_{1}\dots j_{s}}$ and
$\Phi^{i_{1}\dots i_{r}}_{j_{1}\dots j_{s}}$ are the components of
$\Psi$ and $\Phi$ with respect to an orthonormal local frame.
\vspace{2mm}

\section{Characterising harmonic $G$-structures via the intrinsic
torsion}{\indent} \label{charactgstruc}
 Now we  consider the bundle $\pi_{G} \, : \,
\mathcal{SO}(M) \to \mathcal{SO}(M)/G$. Because we have $\mbox{T}
\mathcal{SO}(M) = \ker \pi_{\Lie{SO}(n)\ast} \oplus \ker \omega$,
the tangent bundle of $\mathcal{SO}(M)/G$ is decomposed into
$\mbox{T} \mathcal{SO}(M)/G = \mathcal{V} \oplus \mathcal{H}$,
where $\mathcal{V} = \pi_{G\ast} (\ker \pi_{\Lie{SO}(n)\ast} )$
and $\mathcal{H} = \pi_{G\ast} (\ker \omega )$. Then the {\it
vertical} and {\it horizontal} distributions $\mathcal{V}$ and
$\mathcal{H}$ are such that $\pi_{\ast} \mathcal{V} =0$ and
$\pi_{\ast} \mathcal{H} = \mbox{\rm T}M$.

Moreover, we consider the pullback or induced bundle $\pi^*
\lie{so}(M)$ of $\lie{so}(M)$ by $\pi,$ that is, the vector bundle
over $\mathcal{SO}(M)/G$ consisting of those pairs $(pG,
\varphi_m)$, where $\pi(pG)=m$ and $\varphi_m \in \lie{so}(M)_m$.
Alternatively, $\pi^* \lie{so}(M)$ is also described as the
associated bundle $\mathcal{SO}(M) \times_G \lie{so}(n)$ to
$\pi_{G}.$ Then $\pi^* \lie{so}(M)$ is decomposed into $\pi^*
\lie{so}(M) = \lie{g}_{\mathcal{SO}(M)} \oplus
\lie{m}_{\mathcal{SO}(M)}$, where $\lie{g}_{\mathcal{SO}(M)}=
\mathcal{SO}(M) \times_G \lie{g} $ and
 $\lie{m}_{\mathcal{SO}(M)}= \mathcal{SO}(M) \times_G \lie{m}$.
A metric on each fiber of $\pi^* \lie{so}(M)$ is defined by
$$
\langle (pG , \varphi_m) , (pG ,\psi_m) \rangle = \langle
\varphi_m , \psi_m \rangle,
$$
where $\langle \cdot , \cdot \rangle$ in the right side is the
extension to $(1,1)$-tensors of the metric on $M$ given by
\eqref{extendedmetric}. With respect to this metric,  the
decomposition $\pi^* \lie{so}(M) = \lie{g}_{\mathcal{SO}(M)}
\oplus \lie{m}_{\mathcal{SO}(M)}$ is orthogonal.

Additionally, we have a covariant derivative $\nabla$ on $\pi^*
\lie{so}(M)$ induced  by the Levi-Civita connection
  associated to the metric $\langle \cdot , \cdot
\rangle$ on $M$ and given by
\begin{equation} \label{inducedLC}
\left(\nabla_A \tilde{\varphi} \right)_{pG} = \left(pG \, , \,
\frac{\nabla}{ds}_{|s=0} \mbox{pr}_2^{\pi}
\tilde{\varphi}_{\tilde{\gamma}(s)}\right),
\end{equation}
for all $A \in
\mathfrak{X}(\mathcal{SO}(M)/G)=\Gamma^{\infty}(\mbox{T}\mathcal{SO}(M)/G)$
and $\tilde{\varphi} \in \Gamma^{\infty}(\pi^* \lie{so}(M)),$
where $s \to \tilde{\gamma}(s)$ is a curve in $\mathcal{SO}(M)/G$
such that $\tilde{\gamma}(0)=pG$ and $\tilde{\gamma}'(0) = A_{pG}$
and $ \mbox{pr}_2^{\pi}$ is the projection $ \mbox{pr}_2^{\pi}(pG,
\varphi_{m}) = \varphi_{m}$ on $\lie{so}(M).$ Note that, in the
right side, the covariant derivative is along the curve $\gamma(s)
=\pi \circ \tilde{\gamma}(s).$

There is a canonical isomorphism between $\mathcal{V}$ and the
bundle $\lie{m}_{\mathcal{SO}(M) }$. For describing such an
isomorphism, let us firstly say that the elements in
$\lie{m}_{\mathcal{SO}(M)}$ can be seen as pairs $(pG ,
\varphi_m)$ such that if $\varphi_m$ with respect to $p$ is
expressed as in Equation \eqref{matrix1}, then $( a_{ji}) \in
\lie{m}$.
 Now, let us describe the mentioned
canonical isomorphism $\phi_{| \mathcal{V}_{pG}} :
\mathcal{V}_{pG} \to \left( \lie{m}_{\mathcal{SO}(M)}
\right)_{pG}$. For all $a \in \lie{m}$, we have the fundamental
vector field $a^*$ on $\mathcal{SO}(M)$ given by
$$
a^*_p = \frac{d}{dt}_{|t=0} p . \exp t a \in \ker
\pi_{\Lie{SO}(n)*p} \subseteq \mbox{T}_p \mathcal{SO}(M).
$$
Any vector in $\mathcal{V}_{pG}$ is given by $\pi_{G*p} (a^*_p)$,
for some $a =(a_{ji}) \in \lie{m}$. The isomorphism $\phi_{|
\mathcal{V}_{pG}}$ is defined by
$$
\phi_{| \mathcal{V}_{pG}} ( \pi_{G*p} (a^*_p)) = (pG, a_{ji} \,
p(u_i)^{\flat} \otimes p(u_j)).
$$
Next it is extended the map $\phi_{|\mathcal{V}} : \mathcal{V} \to
\lie{m}_{\mathcal{SO}(M)}$ to $\phi : \mbox{T} \,
\mathcal{SO}(M)/G \to \lie{m}_{\mathcal{SO}(M)}$ by saying that
$\phi (A) =0$, for all $A \in \mathcal{H}$,  and $\phi (V) =
\phi_{|\mathcal{V}}(V)$, for all $V \in \mathcal{V}$. This is used
to define a metric $\langle \cdot , \cdot
\rangle_{\mathcal{SO}(M)/G}$ on $\mathcal{SO}(M)/G$ by
\begin{equation} \label{metricquo}
\langle A , B \rangle_{\mathcal{SO}(M)/G} = \langle \pi_{\ast} A ,
\pi_{\ast} B \rangle + \langle \phi (A) , \phi (B) \rangle.
\end{equation}
For this metric, the projection $\pi \, : \, \mathcal{SO}(M)/G \to
M$ is a Riemannian submersion with totally geodesic fibres (see
\cite{Vilms} and \cite[page 249]{Besse:Einstein}). That is, if
${\sf v}  \,: \, \mbox{\rm T} \mathcal{SO}(M)/G \to \mathcal{V}$
and ${\sf h} \,: \, \mbox{\rm T} \mathcal{SO}(M)/G \to
\mathcal{H}$  are respectively the vertical and horizontal
projections and $\nabla^q$ is the Levi-Civita connection of
$\langle \cdot , \cdot \rangle_{\mathcal{SO}(M)/G}$, then $
\nabla^q_{V} W = {\sf v}  \nabla^q_{V} W$ and $ \nabla^q_{V}H
 = {\sf h}  \nabla^q_{V} H$, for all $H \in \Gamma^{\infty}(\mathcal{H})$ and
$V, W \in \Gamma^{\infty}(\mathcal{V})$.

Because $\lie{so}(n) = \lie{g} \oplus \lie{m}$ is a reductive
decomposition, that is, it satisfied ${Ad}_{\Lie{SO}(n)} (G)
\lie{m} \subseteq \lie{m},$ the component $\omega_{\lie{g}}$ in
$\lie{g}$ of the the connection-form $\omega$ is a connection-form
for the bundle $\pi_G \,: \, \mathcal{SO}(M) \to
\mathcal{SO}(M)/G$ which is referred as {\it canonical
connection}. This connection provides a covariant derivative
$\nabla^c$ on $\lie{m}_{\mathcal{SO}(M)}$, which respect to which
the fibre metric is holonomy invariant. The Levi-Civita connection
$\nabla^q$ is related with $\nabla^c$ on
$\lie{m}_{\mathcal{SO}(M)}$ via the projection of the
$\lie{m}$-component of the curvature form $\Omega$ of the
Levi-Civita connection $\nabla$ of $M$. Thus, it is considered the
two-form $\Phi$ on $\mathcal{SO}(M)/G$, with values in
$\lie{m}_{\mathcal{SO}(M)}$, defined by
$$
\Phi (A,B) = \phi \pi_{G*} \Omega ( \tilde{A},
\tilde{B})^*_{\lie{m}} = \phi \pi_{G*}  d \omega ( \tilde{A},
\tilde{B})^*_{\lie{m}} + \phi \pi_{G*}  [\omega (\tilde{A}) ,
\omega ( \tilde{B}) ]^*_{\lie{m}},
$$
where $\tilde{A}, \tilde{B} \in \mbox{T} \mathcal{SO}(M)$ such
that $\pi_{G*} \tilde{A} = A$, $\pi_{G*} \tilde{B} = B$.
Therefore, if on $\mathcal{SO}(M)/G$ we consider the vertical
vectors   $U$ and $V$  and   the horizontal vectors $H$ and $K$,
then
$$
 \Phi (U,V) = 0, \qquad \Phi (U,H) =0, \qquad \Phi (H,K) = \phi \pi_{G*} \Omega (
\tilde{H}, \tilde{K})^*_{\lie{m}}  = \phi \pi_{G*}  d \omega (
\tilde{H},  \tilde{K})^*_{\lie{m}}.
$$

Next, we recall some useful facts proved in \cite[Corollary 2.4
and Proposition 2.7]{Wood2}.
\begin{lemma}[\cite{Wood2}] $\;$ \label{wood:lemma} We have
\begin{enumerate}
\item[{\rm (i)}]
$
\nabla^c_A \tilde{V} = \nabla_A \tilde{V} - [ \phi \, A ,
\tilde{V}].
$
\item[{\rm (ii)}]
$ \phi(\nabla^q_{A}B) - \nabla^c_{A}\phi B = \frac{\textstyle
1}{\textstyle 2}\left \{ [\phi A,\phi B]_{\lie{m}} -
\Phi(A,B)\right \}, $
\end{enumerate}
for all $A,B \in \mathfrak{X}(\mathcal{SO}(M)/G)$ and $\tilde{V}
\in \Gamma^{\infty}(\lie{m}_{\mathcal{SO}(M)}).$
\end{lemma}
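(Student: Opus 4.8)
The plan is to work with the three fibrations at hand --- $\pi_{\SO(n)}\colon\mathcal{SO}(M)\to M$, $\pi_G\colon\mathcal{SO}(M)\to\mathcal{SO}(M)/G$ and $\pi\colon\mathcal{SO}(M)/G\to M$ --- and with the two connection one-forms $\omega$ and $\omega_{\lie{g}}$ that respectively induce $\nabla$ and $\nabla^c$, representing a section $\tilde V$ of $\pi^*\lie{so}(M)=\mathcal{SO}(M)\times_G\lie{so}(n)$ (in particular of $\lie{m}_{\mathcal{SO}(M)}$) by its $G$-equivariant function $f\colon\mathcal{SO}(M)\to\lie{so}(n)$, $f(pg)=\mathrm{Ad}_{g^{-1}}f(p)$. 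By \eqref{matrix1} the section with $\tilde V_{pG}=(pG,\,a_{ji}\,p(u_i)^\flat\otimes p(u_j))$ is the one with $f(p)=(a_{ji})$, and the description of $\phi_{|\mathcal V}$ in the text says precisely that for a vertical $A=\pi_{G\ast}(a^*_p)$ with $a\in\lie{m}$, the section $\phi(A)$ is represented by the function taking the value $a$ at $p$.

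For (i), unwinding \eqref{inducedLC} gives the usual description of a bundle connection induced from a principal connection: for any lift $\tilde A$ of $A$ along $\pi_G$, $\nabla_A\tilde V\leftrightarrow\tilde A(f)+[\,\omega(\tilde A),f\,]$ (the right side is lift-independent by $G$-equivariance of $f$); likewise, $\omega_{\lie{g}}$ being the connection form of $\pi_G$, $\nabla^c_A\tilde V\leftrightarrow\tilde A(f)+[\,\omega_{\lie{g}}(\tilde A),f\,]$. Subtracting, $\nabla_A\tilde V-\nabla^c_A\tilde V\leftrightarrow[\,\omega_{\lie{m}}(\tilde A),f\,]$, where $\omega_{\lie{m}}$ is the $\lie{m}$-part of $\omega$. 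It only remains to check $\omega_{\lie{m}}(\tilde A)=\phi(A)$: for vertical $A=\pi_{G\ast}(a^*_p)$, $a\in\lie{m}$, take $\tilde A=a^*_p$, so $\omega_{\lie{m}}(\tilde A)=a$, the representative of $\phi(A)$; for horizontal $A$, take an $\omega$-horizontal lift, so $\omega_{\lie{m}}(\tilde A)=0=\phi(A)$; and replacing the lift by a fundamental field of $\lie{g}$ alters neither side. Hence $\nabla_A\tilde V-\nabla^c_A\tilde V=[\phi A,\tilde V]$.

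For (ii), the first observation is that $(A,B)\mapsto\phi(\nabla^q_AB)-\nabla^c_A\phi B$ is tensorial in both arguments --- in $B$ because $\phi$ is a bundle map linear over functions, so the $(Af)\phi B$ terms cancel --- hence it suffices to evaluate on pairs in which each of $A,B$ is vertical or basic horizontal, and to use O'Neill's equations for $\pi$, whose fibres are totally geodesic. \emph{Vertical--vertical:} $\nabla^q_UV$ is vertical and is the Levi-Civita connection of the fibre, which by \eqref{metricquo} is the normal homogeneous space $\SO(n)/G$, while $\nabla^c$ restricts there to the canonical connection; the naturally reductive identity $\nabla^{\mathrm{LC}}-\nabla^{\mathrm{can}}=\frac12[\,\cdot\,,\cdot\,]_{\lie{m}}$ together with $\Phi(U,V)=0$ gives the claim. \emph{Vertical--horizontal} $(A=U,\,B=H)$: totally geodesic fibres force $\nabla^q_UH$ horizontal, so $\phi(\nabla^q_UH)=0=\nabla^c_U(\phi H)$, matching $\frac12([\phi U,\phi H]_{\lie{m}}-\Phi(U,H))=0$. \emph{Horizontal--vertical} $(A=H,\,B=U)$: torsion-freeness of $\nabla^q$ and the previous case give ${\sf v}\nabla^q_HU={\sf v}[H,U]$; lifting $H$ to its $\omega$-horizontal lift $\tilde H$ and $U$ to the $\pi_G$-related vertical field $\tilde U$ with $\omega(\tilde U)=f_U$ (the function representing $\phi U$), the structure equation together with the vanishing of the curvature $\Omega$ on vertical vectors gives $\omega([\tilde H,\tilde U])=\tilde H(f_U)$, which is precisely the representative of $\nabla^c_H\phi U$; as $\phi H=0$ and $\Phi(H,U)=0$, the identity follows. \emph{Horizontal--horizontal:} here $\nabla^c_H\phi K=0$ and ${\sf v}\nabla^q_HK=\frac12{\sf v}[H,K]$; the structure equation $\omega([\tilde H,\tilde K])=-\Omega(\tilde H,\tilde K)$ identifies ${\sf v}[H,K]$ with $-\pi_{G\ast}(\Omega(\tilde H,\tilde K)^*_{\lie{m}})$, so $\phi(\nabla^q_HK)=-\frac12\Phi(H,K)=\frac12([\phi H,\phi K]_{\lie{m}}-\Phi(H,K))$.

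The step I expect to be the main obstacle is the bookkeeping --- keeping the three fibrations, the two connection forms, and the sign conventions for fundamental vector fields and the $\mathrm{ad}$-action mutually consistent throughout --- together with producing, in the vertical--vertical case, the factor $+\frac12[\,\cdot\,,\cdot\,]_{\lie{m}}$ with the correct sign under the identifications fixed by $\phi$. Once (i) and the O'Neill/Vilms description of $\nabla^q$ are in hand, the curvature form $\Phi$ enters only through the horizontal--horizontal case, essentially by its definition, so that part is routine.
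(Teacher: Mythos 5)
Your argument is correct, but there is nothing in the paper to compare it against: the authors give no proof of this lemma at all, simply importing it from Wood's paper (the bracketed citation \cite{Wood2}, specifically Corollary~2.4 and Proposition~2.7 there). Your reconstruction is the standard one and all four cases check out. Part (i) is exactly the comparison of the two covariant derivatives on $\mathcal{SO}(M)\times_G\lie{so}(n)$ induced by $\omega$ and $\omega_{\lie{g}}$ via the representative-function formula $\tilde A(f)+[\,\omega'(\tilde A),f\,]$, and your identification $\omega_{\lie{m}}(\tilde A)=\phi(A)$ is right (indeed $\mathsf{v}(\pi_{G*}W)=\pi_{G*}\bigl((\omega(W)_{\lie{m}})^*\bigr)$ for any $W\in T\mathcal{SO}(M)$, which also powers your horizontal--vertical and horizontal--horizontal cases). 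For (ii), the tensoriality reduction and the case analysis via totally geodesic fibres, O'Neill's tensor $A_HK=\tfrac12\mathsf{v}[H,K]$, and the structure equation $\omega([\tilde H,\tilde K])=-\Omega(\tilde H,\tilde K)$ are all sound; the sign of the naturally reductive correction $\nabla^{LC}-\nabla^{\mathrm{can}}=\tfrac12[\cdot,\cdot]_{\lie{m}}$, which you rightly flag as the delicate point, agrees with the paper's own consequence \eqref{nn}, so your conventions are consistent with theirs. The only cosmetic caveat is that in (i) the bracket $[\phi A,\tilde V]$ need not lie in $\lie{m}_{\mathcal{SO}(M)}$, so the identity should be read in all of $\pi^*\lie{so}(M)$ with $\nabla^c$ the canonical connection on that bundle; your derivation already works there, so nothing needs to change.
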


From here, we obtain
\begin{equation}\label{nn}
\phi ( \nabla^q_A V) = \nabla^c_A\phi \, V + \frac12 [ \phi A ,
\phi V]_{\lie{m}} = \nabla_A \phi \, V - \frac12 [ \phi A , \phi
V]_{\lie{m}} - [ \phi A , \phi V]_{\lie{g}},
\end{equation}
for all $A \in \mathfrak{X}(\mathcal{SO}(M)/G)$ and $V \in
\Gamma^{\infty}(\mathcal{V}).$
\begin{remark} \label{id:mvert}
{\rm (1) The Lie bracket on $\pi^{*} \lie{so}(M)$ is defined by
$$
[(pG,\varphi_m), (pG,\psi_m)] = (pG , [\varphi_m, \psi_m]) = (pG ,
\varphi_m \circ \psi_m - \psi_m \circ  \varphi_m).
$$

(2) Given a $G$-structure $\sigma \, : \, M \to
\mathcal{SO}(M)/G$, the bundle $\sigma^* \pi^* \lie{so}(M)$ is
identified with $\lie{so}(M)$ by the bijection map $\mbox{\rm
pr}_2^{\pi} \circ \mbox{\rm pr}_2^{\sigma}:
(m,(\sigma(m),\varphi_{m}))\mapsto \varphi_{m}$ and likewise,
$\sigma^* \lie{g}_{\mathcal{SO}(M)} \cong \lie{g}_{\sigma}$ and
$\sigma^* \lie{m}_{\mathcal{SO}(M)} \cong \lie{m}_{\sigma}$. With
respect to sections, if $\varphi \in \Gamma^{\infty}(\lie{so}(M))$
then $pG \to (pG , \varphi_{\pi(pG)})$ belongs to
$\Gamma^{\infty}(\pi^* \lie{so}(M))$ and conversely, if
$\tilde{\varphi} \in \Gamma^{\infty} (\lie{m}_{\mathcal{SO}(M)})$
(respectively, $\tilde{\varphi} \in \Gamma^{\infty}
(\lie{g}_{\mathcal{SO}(M)})$), then $m \to \mbox{pr}_2^\pi
\tilde{\varphi}_{\sigma(m)}$ is in $\Gamma^{\infty}
(\lie{m}_{\sigma})$ (respectively, in $\Gamma^{\infty}
(\lie{g}_{\sigma})$).
 }
\end{remark}

 Now, we consider the set of all possible $G$-structures on a
closed and oriented Riemannian manifold $(M, \langle \cdot , \cdot
\rangle).$ As it has been already mentioned, this set is
identified with the manifold $\Gamma^{\infty}(\mathcal{SO}(M)/G)$
of all global sections $\sigma \, : \, M \to \mathcal{SO}(M)/G$.
Then the {\it energy} of the $G$-structure is defined as the
energy of the corresponding section $\sigma,$ given by the
integral
\begin{equation}\label{1}
 {\mathcal E}(\sigma)=\frac{\textstyle 1}{\textstyle2}\int_{M}\|\sigma_{\ast}\|^{2}dv,
\end{equation}
where $\|\sigma_{\ast}\|^{2}$ is the norm of the differential
$\sigma_{\ast}$ of $\sigma$ with respect to the metrics $\langle
\cdot , \cdot \rangle$ and $ \langle \cdot , \cdot
\rangle_{\mathcal{SO}(M)/G}$, and $dv$ denotes the volume form on
$(M,\langle \cdot , \cdot \rangle)$. On the domain of a local
orthonormal frame field $\{e_1, \dots , e_n \}$ on $M$,
$\|\sigma_{*}\|^{2}$ can be expressed as $\|\sigma_{*}\|^{2} =
\langle
\sigma_{*}e_{i},\sigma_{*}e_{i}\rangle_{\mathcal{SO}(M)/G}$.
Furthermore, from \eqref{1} and using \eqref{metricquo}, it is
obtained that the energy $\mathcal{E}(\sigma)$ of $\sigma$ is
given by
\[
{\mathcal E}(\sigma) = \frac{\textstyle n}{\textstyle 2} {\rm
Vol}(M) + \frac{\textstyle 1}{\textstyle 2}\int_{M}\|  \phi \,
\sigma_*\|^{2}dv.
\]
We will  call the {\it total bending} of the $G$-structure
$\sigma$ to the relevant part of this formula
$B(\sigma)=\frac{\textstyle 1}{\textstyle 2}\int_{M}\| \phi \,
\sigma_*\|^{2}dv$. Because we will show that $ \phi \,
\sigma_{\ast} = - \xi^G$, the total bending provides a measure of
how the $G$-structure $\sigma$ fails to be parallel. Here, we are
doing the identification $\sigma^* \lie{m}_{\mathcal{SO}(M)} \cong
\lie{m}_{\sigma}$ pointed out in Remark \ref{id:mvert}.
\begin{theorem} \label{siginttor}
If $\sigma$ is a global section of $\mathcal{SO}(M)/G$ then $\phi
\, \sigma_{\ast} = - \xi^G$, where $\xi^G$ is the intrinsic
torsion of the $G$-structure determined by $\sigma,$ and the total
bending of the $G$-structure $\sigma$ is given by
$$
B(\sigma)=\frac{\textstyle 1}{\textstyle 2}\int_{M}\| \xi^G\|^{2}
\, dv.
$$
\end{theorem}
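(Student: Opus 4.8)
The plan is to reduce everything to the pointwise identity $\phi\,\sigma_{\ast}X=-\xi^G_X$ for all $X\in\mathrm{T}_mM$; granting this, the total bending formula is immediate, since $B(\sigma)=\frac12\int_M\|\phi\,\sigma_\ast\|^2\,dv$ by definition and the identification $\sigma^\ast\lie{m}_{\mathcal{SO}(M)}\cong\lie{m}_\sigma$ of Remark \ref{id:mvert} is an isometry for the fibre metrics (both being the restriction of the extended metric \eqref{extendedmetric} on $(1,1)$-tensors), whence $\|\phi\,\sigma_\ast\|^2=\|\xi^G\|^2$ as functions on $M$.

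To prove the pointwise identity I would work over the domain $U$ of a local frame field $\wp=\{e_1,\dots,e_n\}\,:\,U\to\mathcal{G}(M)$ adapted to the $G$-structure. Since $\pi_{\Lie{SO}(n)}\comp\wp=\mathrm{id}_U$ and $\pi_G$ is constant on the fibres of the reduced bundle, one has $\sigma=\pi_G\comp\wp$ on $U$, hence $\sigma_\ast X=\pi_{G\ast}(\wp_{\ast m}X)$. Using the splitting $\mathrm{T}\mathcal{SO}(M)=\ker\pi_{\Lie{SO}(n)\ast}\oplus\ker\omega$, decompose $\wp_{\ast m}X=\bigl(\omega_{\wp(m)}(\wp_{\ast m}X)\bigr)^{\ast}_{\wp(m)}+\bigl(\wp_{\ast m}X\bigr)^{hor}$, where the first summand is the fundamental vector field generated by $\omega_{\wp(m)}(\wp_{\ast m}X)\in\lie{so}(n)$ (this is the standard fact that the $\ker\pi_{\Lie{SO}(n)\ast}$-component of a tangent vector is the fundamental vector field of its $\omega$-value) and the second lies in $\ker\omega$. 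Applying $\pi_{G\ast}$, the horizontal summand lands in $\mathcal{H}=\pi_{G\ast}(\ker\omega)$, so $\phi$ annihilates it.

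Now split $\omega_{\wp(m)}(\wp_{\ast m}X)=a_{\lie{g}}+a_{\lie{m}}$ along $\lie{so}(n)=\lie{g}\oplus\lie{m}$. The fundamental vector field of $a_{\lie{g}}$ is tangent to the $G$-orbit $\wp(m)G$, which is precisely the fibre of $\pi_G$, so $\pi_{G\ast}$ kills it; the fundamental vector field of $a_{\lie{m}}$ is mapped by $\pi_{G\ast}$ into $\mathcal{V}$, and by the very definition of $\phi_{|\mathcal{V}}$ (writing $a_{\lie{m}}=\bigl((\omega_{\wp(m)}(\wp_{\ast m}X)_{ji})_{\lie{m}}\bigr)$ and $e_i=\wp(m)(u_i)$) one obtains
$$
\phi(\sigma_\ast X)=\phi\bigl(\pi_{G\ast}(a_{\lie{m}})^{\ast}_{\wp(m)}\bigr)=\bigl(\wp(m)G,\;(\omega_{\wp(m)}(\wp_{\ast m}X)_{ji})_{\lie{m}}\,e_i^{\flat}\otimes e_j\bigr).
$$
Under the identification $\sigma^\ast\lie{m}_{\mathcal{SO}(M)}\cong\lie{m}_\sigma$ this is exactly $(\omega(\wp_{\ast}X)_{ji})_{\lie{m}}\,e_i^{\flat}\otimes e_j$, which by the expression \eqref{inttorome} for the intrinsic torsion equals $-\xi^G_X$. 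This establishes $\phi\,\sigma_\ast=-\xi^G$, and therefore $B(\sigma)=\frac12\int_M\|\phi\,\sigma_\ast\|^2\,dv=\frac12\int_M\|\xi^G\|^2\,dv$.

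The genuinely structural input here consists only of two facts already recorded in the preliminaries: that the vertical component (in $\ker\pi_{\Lie{SO}(n)\ast}$) of $\wp_{\ast m}X$ is the fundamental vector field of $\omega_{\wp(m)}(\wp_{\ast m}X)$, and that $\pi_{G\ast}$ annihilates fundamental vector fields generated by elements of $\lie{g}$ while carrying those generated by elements of $\lie{m}$ isomorphically onto $\mathcal{V}$ (this is the content of the canonical isomorphism $\phi_{|\mathcal{V}}$). The remaining work is purely bookkeeping of the matrix-index conventions of \eqref{matrix1} and \eqref{inttorome}, so I expect the only mild obstacle to be making sure the definition of $\phi_{|\mathcal{V}}$ is matched against \eqref{inttorome} with the correct placement of the $\lie{m}$-projection and of the indices $ji$ versus $ij$.
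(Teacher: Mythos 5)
Your argument is correct and follows essentially the same route as the paper's proof: both reduce to the pointwise identity $\phi\,\sigma_{\ast}X=-\xi^G_X$ by writing $\sigma_{\ast}=\pi_{G\ast}\comp\wp_{\ast}$ for an adapted local frame $\wp$, extracting the vertical part of $\wp_{\ast}X$ as the fundamental vector field of $\omega(\wp_{\ast}X)$, discarding the $\lie{g}$-part under $\pi_{G\ast}$, and matching the $\lie{m}$-part against \eqref{inttorome} via the definition of $\phi_{|\mathcal{V}}$. You merely make explicit two steps (the connection-form decomposition of $\wp_{\ast}X$ and the annihilation of $\lie{g}$-generated fundamental fields by $\pi_{G\ast}$) that the paper compresses into a single displayed chain of equalities.
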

\begin{proof} For $X \in \mbox{T}_m M$, we will compute
$\phi \, \sigma_{\ast} X$. If $\wp = \{ e_1 , \dots , e_n \} : U
\to \mathcal{G}(M)$ is a local frame field adapted to the
$G$-structure $\sigma$ with $m\in U,$ then $\pi_{\Lie{SO}(n)}\comp
\wp = Id_{U}$ and taking $\pi_{G|\mathcal{G}(M)} = \sigma \comp
\pi_{\Lie{SO}(n)}$ into account, we have $\sigma_{*} =
\pi_{G*}\comp \wp_{\ast}.$ Therefore, we get
$$
\begin{array}{lcl}
\phi(\sigma_{*} X) & = &  \phi({\sf v}(\sigma_{*}X)) = \phi({\sf
v}(\pi_{G*}\wp_{\ast} X)) = \phi \left(\pi_{G*}(\omega(\wp_{\ast}
X)_{ji} u_i^{\flat} \otimes u_j)^*\right)\\[0.5pc]
 &  = &
\phi\left((\pi_{G*}(\omega(\wp_{\ast} X)_{ji})_{\lie{m}}
u_i^{\flat} \otimes u_j)^*\right) = \left(\sigma(m),
(\omega(\wp_{\ast} X)_{ji})_{\lie{m}}e_i^{\flat} \otimes
e_j\right).
\end{array}
$$
Thus, from \eqref{inttorome}, we have
$$
\phi \sigma_{*} X = (\sigma(m) , -\xi^G_X)
$$
and
$$
\|\phi\sigma_{*}\|^{2} = \langle \phi \, \sigma_* (e_{i })) , \phi
\, \sigma_* (e_{i}) \rangle = \langle (\sigma , \xi^G_{e_i}) ,
(\sigma, \xi^G_{e_i}) \rangle = \langle \xi^G_{e_i} , \xi^G_{e_i }
\rangle =\|\xi^{G}\|^{2}.
$$
Now, the theorem follows using the above identification $\sigma^*
\lie{m}_{\mathcal{SO}(M)} \cong \lie{m}_{\sigma}.$
\end{proof}
Some immediate consequences of last Theorem, most of them already
proved in \cite{Wood2}, are given in the following corollary.
\begin{corollary} The following conditions are equivalent:
\begin{enumerate}
 \item[{\rm (i)}] $\sigma_{\ast} X$ is horizontal, for all $X \in
\mbox{\rm T}M$.
 \item[{\rm (ii)}] $\sigma$ is a parallel $G$-structure, i.e.,  $\xi^G=0$, or $\nabla^G$ is torsion-free.
 \item[{\rm (iii)}] $\sigma$ is an isometric immersion.
  \item[{\rm (iv)}] $\nabla$ can be reduced to a
  $G$-connection.
\end{enumerate}
\end{corollary}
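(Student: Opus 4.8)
The plan is to route every one of the equivalences through Theorem~\ref{siginttor}, which gives $\phi\,\sigma_{\ast}=-\xi^{G}$, combined with the two structural properties of the bundle map $\phi$ established just before it: $\phi$ annihilates the horizontal distribution $\mathcal H$, and $\phi_{|\mathcal V}$ is an isomorphism onto $\lie{m}_{\mathcal{SO}(M)}$.

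First I would establish the core equivalence \textrm{(i)}~$\Leftrightarrow$~$\xi^{G}=0$. Writing $\sigma_{\ast}X={\sf h}(\sigma_{\ast}X)+{\sf v}(\sigma_{\ast}X)$ and using $\phi\comp{\sf h}=0$, we obtain $\phi\,\sigma_{\ast}X=\phi({\sf v}(\sigma_{\ast}X))$; since $\phi_{|\mathcal V}$ is an isomorphism, $\sigma_{\ast}X$ is horizontal precisely when $\phi\,\sigma_{\ast}X=0$. By Theorem~\ref{siginttor} this holds for every $X\in\mbox{\rm T}M$ exactly when $\xi^{G}=0$, which is the first form of \textrm{(ii)}. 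For the second form, note $\xi^{G}=\nabla^{G}-\nabla\in\lie{m}_{\sigma}$, so $\xi^{G}=0$ forces $\nabla^{G}=\nabla$, which is torsion-free; conversely, if the torsion of $\nabla^{G}$ vanishes, the polarisation identity in the second line of \eqref{tortor}, applied to $\nabla^{G}$ (whose difference tensor with $\nabla$ is $\xi^{G}$), gives $\langle\xi^{G}_{X}Y,Z\rangle=0$ for all $X,Y,Z$, hence $\xi^{G}=0$. This settles \textrm{(i)}~$\Leftrightarrow$~\textrm{(ii)}.

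Next I would treat \textrm{(i)}~$\Leftrightarrow$~\textrm{(iii)}. Since $\pi\comp\sigma=\mathrm{id}_{M}$ we have $\pi_{\ast}\comp\sigma_{\ast}=\mathrm{id}$, so $\sigma_{\ast}$ is injective and $\sigma$ is automatically an immersion; and by \eqref{metricquo},
\[
\langle\sigma_{\ast}X,\sigma_{\ast}Y\rangle_{\mathcal{SO}(M)/G}=\langle X,Y\rangle+\langle\phi\,\sigma_{\ast}X,\phi\,\sigma_{\ast}Y\rangle .
\]
Thus $\sigma$ is an isometric immersion iff $\langle\phi\,\sigma_{\ast}X,\phi\,\sigma_{\ast}X\rangle=0$ for all $X$, i.e. iff $\phi\,\sigma_{\ast}=0$, which is \textrm{(i)}. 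Finally, for \textrm{(ii)}~$\Leftrightarrow$~\textrm{(iv)}: $\nabla^{G}$ is, by construction, the unique $G$-connection on $M$ with intrinsic torsion lying in $\lie{m}_{\sigma}$, and any two $G$-connections differ by a $\lie{g}_{\sigma}$-valued tensor; hence if $\nabla$ is itself a $G$-connection then $\xi^{G}=\nabla^{G}-\nabla\in\lie{g}_{\sigma}\cap\lie{m}_{\sigma}=0$, while conversely $\xi^{G}=0$ makes $\nabla=\nabla^{G}$ a $G$-connection. This closes the cycle of equivalences.

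Because Theorem~\ref{siginttor} carries all the real content, I do not anticipate a genuine obstacle here: the argument is a matter of unwinding definitions. The only point needing a moment's care is the internal equivalence inside \textrm{(ii)} between ``$\xi^{G}=0$'' and ``$\nabla^{G}$ torsion-free'', which is exactly where \eqref{tortor} is used.
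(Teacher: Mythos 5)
Your proof is correct and follows exactly the route the paper intends: the corollary is stated without proof as an ``immediate consequence'' of Theorem~\ref{siginttor}, and your argument is precisely the expected unwinding --- using $\phi\,\sigma_{\ast}=-\xi^{G}$ together with the fact that $\phi$ kills $\mathcal H$ and is injective on $\mathcal V$, the metric formula \eqref{metricquo}, the polarisation identity \eqref{tortor}, and the fact that two $G$-connections differ by a $\lie{g}_{\sigma}$-valued tensor while $\lie{g}_{\sigma}\cap\lie{m}_{\sigma}=0$. No gaps.
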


Next, we determine the Euler-Lagrange equation or the critical
point condition for the energy functional $\mathcal{E}$ on closed
and oriented Riemannian manifolds. If we consider a smooth
variation $\sigma_t \in \Gamma^{\infty} (\mathcal{SO}(M)/G)$ of
$\sigma=\sigma_0$, then the corresponding {\it variation field} $m
 \to \varphi(m) = \frac{d}{dt}_{|t=0} \sigma_t(m)$ is a section of the
pullback bundle $\sigma^{*}\mathcal{V}$ over $M$. Thus, the
tangent space $\mbox{T}_{\sigma}  \Gamma^{\infty}
(\mathcal{SO}(M)/G)$ is identified with the space $\Gamma^{\infty}
(\sigma^{*} \mathcal{V})$ of global sections of $\sigma^{*}
\mathcal{V}$ \cite{Ur}. Because $\phi$ determines also an
identification $\sigma^{*}\mathcal{V}\cong \lie{m}_{\sigma}$ by
the bijection
\[
(m,\pi_{G*\sigma(m)}a^{*}_{\sigma(m)})\mapsto \varphi_{m} = a_{ji}
\, p(u_i)^{\flat} \otimes p(u_j),
\]
where $a = (a_{ij})\in \lie{m}$ and $p\in \mathcal{G}(M)$ with
$\pi_{\Lie{SO}(n)}(p) = m,$ we can identify the tangent space
$\mbox{T}_{\sigma}  \Gamma^{\infty} (\mathcal{SO}(M)/G)$ with
$\Gamma^{\infty}(\lie{m}_{\sigma})$.

In following results, we will consider the coderivative $d^*
\xi^G$ of the intrinsic torsion $\xi^G$, which is defined by
$$
(d^* \xi^G)_m (X) = - (\nabla_{e_i} \xi^G)_{e_i} X,
$$
where $\{ e_1, \dots , e_n \}$ is any orthonormal frame on $m\in
M$. In a first instance, $d^* \xi^G$ is a global section of
$\lie{so}(M)= \lie{g}_{\sigma} \oplus \lie{m}_{\sigma}$.
\begin{lemma} \label{coderxilem} The coderivative $d^* \xi^G$ is a global section
of $\lie{m}_{\sigma}$ and is given by
\begin{equation} \label{coderxi}
d^* \xi^G = - (\nabla^{G}_{e_i} \xi^G)_{e_i} - \xi^G_{\xi^G_{e_i}
e_i}.
\end{equation}
\end{lemma}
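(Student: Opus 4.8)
The plan is to compute $d^*\xi^G$ directly from its definition $(d^*\xi^G)_m(X) = -(\nabla_{e_i}\xi^G)_{e_i}X$ by rewriting the Levi-Civita connection $\nabla$ in terms of the minimal connection $\nabla^G$ using the relation $\nabla_X = \nabla^G_X - \xi^G_X$. The key observation is that $\xi^G$ should be regarded as a section of $\mathrm{T}^*M \otimes \lie{so}(M)$, and since the minimal connection $\nabla^G$ is a $G$-connection it preserves both subbundles $\lie{g}_\sigma$ and $\lie{m}_\sigma$; hence $\nabla^G_{e_i}\xi^G$ remains a section of $\mathrm{T}^*M\otimes\lie{m}_\sigma$. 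By contrast, $\nabla$ only preserves $\lie{so}(M)$, not the splitting, so the naive expression $(\nabla_{e_i}\xi^G)_{e_i}$ lives a priori in all of $\lie{so}(M)$; the point of the lemma is that the discrepancy lands back in $\lie{m}_\sigma$.

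First I would expand, for a local frame field $\wp = \{e_1,\dots,e_n\}$ adapted to the $G$-structure and a tangent vector $X$,
\[
(\nabla_{e_i}\xi^G)_{e_i}X = \nabla_{e_i}\bigl(\xi^G_{e_i}X\bigr) - \xi^G_{\nabla_{e_i}e_i}X - \xi^G_{e_i}\bigl(\nabla_{e_i}X\bigr),
\]
and then substitute $\nabla_{e_i} = \nabla^G_{e_i} - \xi^G_{e_i}$ into each occurrence (the first as a connection on $\lie{so}(M)$, the other two as a connection on $\mathrm{T}M$). Collecting terms, the $\nabla^G$-pieces assemble into $(\nabla^G_{e_i}\xi^G)_{e_i}X$, which is the covariant derivative of $\xi^G$ as a section of $\mathrm{T}^*M\otimes\lie{m}_\sigma$ with respect to $\nabla^G$, and is therefore in $\lie{m}_\sigma$. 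The remaining correction terms are quadratic in $\xi^G$: one gets a term $\xi^G_{\xi^G_{e_i}e_i}X$ coming from $-\xi^G_{\nabla_{e_i}e_i - \nabla^G_{e_i}e_i}X = \xi^G_{\xi^G_{e_i}e_i}X$, and a pair of terms $-\xi^G_{e_i}(\xi^G_{e_i}X)$ and $+(\text{from the first summand})\,\xi^G_{e_i}(\xi^G_{e_i}X)$ that should cancel, using that $\nabla^G$ being metric means $\nabla^G(\xi^G_{e_i}\cdot)$ contributes no extra $\xi^G_{e_i}$-composition beyond what I have written — one has to be careful to bracket the compositions of endomorphisms correctly.

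The main obstacle I expect is the bookkeeping of which $\nabla$ acts as a connection on which bundle ($\lie{so}(M)$ versus $\mathrm{T}M$) and getting the signs and the ordering of the endomorphism compositions right so that the two cubic/quadratic cross-terms involving $\xi^G_{e_i}\circ\xi^G_{e_i}$ cancel exactly, leaving only $-\xi^G_{\xi^G_{e_i}e_i}$. Once that cancellation is verified, formula \eqref{coderxi} follows immediately, and since its right-hand side $-(\nabla^G_{e_i}\xi^G)_{e_i} - \xi^G_{\xi^G_{e_i}e_i}$ is visibly a sum of elements of $\lie{m}_\sigma$ (the first because $\nabla^G$ preserves $\lie{m}_\sigma$, the second because $\xi^G_Y \in \lie{m}_\sigma$ for every $Y$), this simultaneously proves that $d^*\xi^G$ is a global section of $\lie{m}_\sigma$ rather than of all of $\lie{so}(M)$, which is the other assertion of the lemma. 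A minor point to check is independence of the chosen adapted frame, but this is automatic since the defining expression for $d^*\xi^G$ is already frame-independent.
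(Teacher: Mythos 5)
Your proposal is correct and follows essentially the same route as the paper: substitute $\nabla = \nabla^G - \xi^G$ into $(\nabla_{e_i}\xi^G)_{e_i}$, observe that the quadratic terms $\mp\xi^G_{e_i}(\xi^G_{e_i}X)$ cancel leaving only $\xi^G_{\xi^G_{e_i}e_i}$, and read off the $\lie{m}_\sigma$-membership from the fact that $\nabla^G$ preserves the $G$-type of $\xi^G$ while $\xi^G_Y\in\lie{m}_\sigma$ for every $Y$. The paper packages your term-by-term cancellation as the single identity $(\xi^G_{e_i}\xi^G)_{e_i} = -\xi^G_{\xi^G_{e_i}e_i}$ for the derivation action of $\xi^G_{e_i}$ on the tensor $\xi^G$ (and note only that in your expansion the $-\xi^G_{e_i}(\xi^G_{e_i}X)$ term comes from the first summand and the $+$ from the third, not the other way around), but the computation is identical.
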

\begin{proof} Because $\nabla^G = \nabla + \xi^G$, it follows
that $ d^* \xi^G = - (\nabla^{G}_{e_i} \xi^G)_{e_i} + (\xi^G_{e_i}
\xi^G)_{e_i}. $ But one can check that $(\xi^G_{e_i} \xi)_{e_i} =
- \xi^G_{\xi^G_{e_i} e_i}$. Thus, Equation \eqref{coderxi} is
obtained. It is obvious  that $\xi^G_{\xi^G_{e_i} e_i}$ is in
$\lie{m}_{\sigma}$. Since $\nabla^G$ is a $G$-connection,
$\nabla^G$ preserves the $G$-type of a tensor. Therefore, from
$\xi^G_X \in \lie{m}_{\sigma}$, it follows that $(\nabla^{G}_{e_i}
\xi^G)_{e_i} \in \lie{m}_{\sigma}$.
\end{proof}
\begin{theorem}[The first variation formula] \label{firstvar}
If $(M,\langle \cdot , \cdot\rangle)$ a closed and oriented
Riemannian manifold and $\sigma$ a global section of
$\mathcal{SO}(M)/G$, then, for all $\varphi \in \Gamma^{\infty}
(\lie{m}_{\sigma}) \cong \mbox{\rm T}_{\sigma} \Gamma^{\infty}
(\mathcal{SO}(M)/G)$, we have
$$
d \mathcal{E}_{\sigma} (\varphi) = - \int_M \langle \xi^G , \nabla
\varphi \rangle dv =  - \int_M \langle d^* \xi^G , \varphi \rangle
dv,
$$
where $\xi^G$ is the intrinsic torsion of $\sigma$.
\end{theorem}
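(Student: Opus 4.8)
The plan is to differentiate $\mathcal{E}(\sigma_t) = \frac{n}{2}\operatorname{Vol}(M) + \frac12\int_M \|\phi\,(\sigma_t)_*\|^2\,dv$ at $t=0$ and reduce the integrand to something expressible in terms of $\xi^G$ and $\nabla\varphi$. By Theorem \ref{siginttor} we have $\phi\,(\sigma_t)_* = -\xi^{G}_t$, so $d\mathcal{E}_\sigma(\varphi) = \frac{d}{dt}_{|t=0}\frac12\int_M \|\xi^{G}_t\|^2\,dv = \int_M \langle \xi^{G}, \dot\xi^{G}\rangle\,dv$, where $\dot\xi^{G} = \frac{d}{dt}_{|t=0}\xi^{G}_t$. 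The first task is therefore to compute this derivative of the intrinsic torsion along the variation. The cleanest route is to work upstairs on $\mathcal{SO}(M)/G$: the variation $\sigma_t$ is a map $(-\epsilon,\epsilon)\times M \to \mathcal{SO}(M)/G$, with $\partial_t$ pushing forward to the variation field, which under $\phi$ is identified with $\varphi \in \Gamma^\infty(\lie{m}_\sigma)$. Then $\dot\xi^{G}$ should be related to $\phi$ applied to the mixed second derivative $\frac{\nabla^q}{\partial t}(\sigma_t)_* X$, and one uses the symmetry $\frac{\nabla^q}{\partial t}(\sigma_t)_*X = \frac{\nabla^q}{\partial X}(\sigma_t)_*\partial_t$ (torsion-freeness of $\nabla^q$) together with Lemma \ref{wood:lemma} and formula \eqref{nn} to convert the vertical covariant derivative into $\nabla\varphi$ plus bracket terms lying in $\lie{g}_\sigma$.

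The key algebraic point is that the bracket corrections, which take values in $\lie{g}_\sigma$ (or in $\lie{m}_\sigma$ but pair to zero against $\xi^G$ for type reasons, via the reductivity $[\lie{m},\lie{m}]\subseteq\lie{g}$... — more precisely one must check which components survive), drop out of $\langle \xi^G, \dot\xi^G\rangle$ because $\xi^G$ takes values in $\lie{m}_\sigma$ and the decomposition $\lie{so}(M) = \lie{g}_\sigma\oplus\lie{m}_\sigma$ is orthogonal. What remains is $\langle \xi^G, \dot\xi^G\rangle = \langle \xi^G, \nabla\varphi\rangle$ pointwise, or rather after integration; so the first equality $d\mathcal{E}_\sigma(\varphi) = -\int_M \langle\xi^G,\nabla\varphi\rangle\,dv$ follows. (The sign comes from $\phi(\sigma_*) = -\xi^G$, which contributes a $(-1)^2$ to $\|\cdot\|^2$ but a single $(-1)$ when one carefully tracks $\dot\xi^G$ versus $\phi$ of the derivative; I would verify this sign bookkeeping carefully as it is the most error-prone part.)

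For the second equality, integrate by parts: $\int_M \langle \xi^G, \nabla\varphi\rangle\,dv = \int_M \langle (\nabla_{e_i}\xi^G)_{e_i}\,,\,\varphi\rangle\,dv + \int_M \operatorname{div}(\text{something})\,dv$, where the divergence term integrates to zero since $M$ is closed. By the definition $(d^*\xi^G)(X) = -(\nabla_{e_i}\xi^G)_{e_i}X$, this gives $-\int_M\langle d^*\xi^G,\varphi\rangle\,dv$. One subtlety: $d^*\xi^G$ a priori lies in $\lie{so}(M)$, but by Lemma \ref{coderxilem} it actually lies in $\lie{m}_\sigma$, so pairing it against $\varphi\in\Gamma^\infty(\lie{m}_\sigma)$ is the same as pairing its full $\lie{so}(M)$-value; hence no projection is lost and the formula is consistent.

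The main obstacle I anticipate is the passage from $\dot\xi^G$ to $\nabla\varphi$ cleanly — i.e., making rigorous the interchange of $t$- and $X$-derivatives on the quotient manifold and correctly identifying, via $\phi$, the vertical part of the second variation with the covariant derivative of $\varphi$ on $M$, while accounting for the bracket terms from \eqref{nn}. Once that identity is in hand, the integration by parts and the identification of $d^*\xi^G$ are routine given Lemma \ref{coderxilem}.
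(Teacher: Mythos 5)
Your proposal follows essentially the same route as the paper: differentiate the energy, use the totally geodesic fibres and the symmetry of mixed covariant derivatives to turn the $t$-derivative into a covariant derivative of the variation field along $M$, convert via \eqref{nn}, discard the bracket terms, and integrate by parts using Lemma \ref{coderxilem}. The one point you left hanging resolves as follows: the $\lie{g}$-bracket term dies by orthogonality of $\lie{g}_\sigma\oplus\lie{m}_\sigma$, while the term $\tfrac12[\phi\,\sigma_*X,\phi\,\varphi]_{\lie{m}}$ dies not because of any inclusion $[\lie{m},\lie{m}]\subseteq\lie{g}$ (which is not assumed) but because the inner product on $\lie{so}(n)$ is $\mathrm{ad}$-invariant ($SO(n)/G$ is normal homogeneous), so $\langle [\phi\,\sigma_*X,\phi\,\varphi],\phi\,\sigma_*X\rangle=0$.
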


\begin{proof} We will also denote by $\varphi$ the section in
$\Gamma^{\infty}(\sigma^{*}\mathcal{V})$ which is identified with
$\varphi \in \Gamma^{\infty}(\lie{m}_{\sigma})$, i.e.,
$\mbox{pr}_2^\pi \phi \varphi = \varphi$. If $I_{\varepsilon_1} =
]-\varepsilon_1, \varepsilon_1[ \to \Gamma^{\infty}
(\mathcal{SO}(M)/G)$, $t \to \sigma_t$,  is  a curve such that
$\sigma_0 = \sigma$, and  $ (\sigma_t)'(0) = \varphi$, then we
obtain
\begin{eqnarray*}
d{\mathcal E}_{\sigma}(\varphi) = \frac{\textstyle d}{\textstyle
dt}_{\mid t=0}{\mathcal E}(\sigma_t) & = &
\frac{1}{2}\int_{M}\frac{d}{dt}_{\mid t=0} \langle {\sf v} \,
\sigma_{t*} , {\sf v} \, \sigma_{t*} \rangle_{\mathcal{SO}(M)/G}
dv \\
 & = & \int_{M} \langle  {\sf v} \, \sigma_{*}  ,
\frac{\nabla^q}{dt}_{\mid t=0}  {\sf v} \, \sigma_{t*}
\rangle_{\mathcal{SO}(M)/G} dv.
\end{eqnarray*}
Now, since $\pi$ have totally geodesic fibres and the tangent
vector $(\sigma_t(m))'_{t=0} = \varphi(m)$ of the curve $t \to
\sigma_t(m)$ is vertical, it follows
$$
\langle  {\sf v} \, \sigma_{*}  , \frac{\nabla^q}{dt}_{\mid t=0}
{\sf v} \, \sigma_{t*} \rangle_{\mathcal{SO}(M)/G} = \langle  {\sf
v} \, \sigma_{*}  , \frac{\nabla^q}{dt}_{\mid t=0}  \sigma_{t*}
\rangle_{\mathcal{SO}(M)/G}.
$$
Next, if $I_{\varepsilon_2} = ]-\varepsilon_2, \varepsilon_2[ \to
M$, $s \to \gamma (s)$, is a curve such that $\gamma(0)=m$ and
$\gamma'(0)=X$ and  we consider the smooth map $I_{\varepsilon_1}
\times I_{\varepsilon_2} \to \mathcal{SO}(M)/G$ defined by $(t,s)
\to \sigma_t(\gamma (s))$, then we obtain
\begin{equation*}
\frac{\nabla^q }{\partial t}_{|t=0} \frac{\partial}{\partial
s}_{|s =0} (\sigma_t (\gamma(s)))  = \frac{\nabla^q}{dt}_{|t=0}
(\sigma_{t \ast m} X ) = \frac{\nabla^q}{\partial s}_{|s=0}
\frac{\partial}{\partial t}_{|t =0} (\sigma_t (\gamma(s))) =
\frac{\nabla^q}{d s}_{|s=0} \varphi(\gamma(s)).
\end{equation*}
Therefore,
\begin{eqnarray*}
 \langle  {\sf
v} \, \sigma_{*} X  , \frac{\nabla^q}{dt}_{\mid t=0}  \sigma_{t*}
X \rangle_{\mathcal{SO}(M)/G} & = & \langle  {\sf v} \, \sigma_{*}
X , \frac{\nabla^q}{ds}_{\mid s=0} \varphi(\gamma(s))
\rangle_{\mathcal{SO}(M)/G} \\
& = & \langle  \phi \sigma_{*} X , \phi \frac{\nabla^q}{ds}_{\mid
s=0} \varphi(\gamma(s)) \rangle.
 \end{eqnarray*}
Hence, using \eqref{nn}, we get
\begin{eqnarray*}
 \langle  {\sf
v} \, \sigma_{*} X  , \frac{\nabla^q}{dt}_{\mid t=0}  \sigma_{t*}
X \rangle_{\mathcal{SO}(M)/G} & = &  \langle  \phi \sigma_{*} X ,
\frac{\nabla}{ds}_{\mid s=0} \phi \varphi(\gamma(s)) - \frac12 [
\phi  \sigma_{*} X , \phi \varphi]_{\lie{m}} \rangle
\\
  & = & \langle  \phi \sigma_{*} X ,
\frac{\nabla}{ds}_{\mid s=0}  \phi \varphi(\gamma(s)) \rangle,
 \end{eqnarray*}
where we have used that $SO(n)/G$ is a normal homogeneous
Riemannian manifold
 and $\frac{\nabla}{ds}_{\mid s=0} \phi
\varphi(\gamma(s))$ means the covariant derivative along the curve
$ s \to \sigma (\gamma(s))$. Finally, since by Equation
\eqref{inducedLC} we have
$$
\frac{\nabla}{ds}_{\mid s=0}  \phi \varphi(\gamma(s)) = \left(
\sigma (\gamma(0)) , \frac{\nabla}{ds}_{\mid s=0} \mbox{pr}_2^\pi
\phi \varphi(\gamma(s)) \right),
$$
then we obtain
\begin{eqnarray*}
 \langle  {\sf v} \, \sigma_{*} X  , \frac{\nabla^q}{dt}_{\mid t=0}  \sigma_{t*}
X \rangle_{\mathcal{SO}(M)/G} & = &  \langle  \mbox{pr}_2^\pi \phi
\sigma_{*} X , \frac{\nabla}{ds}_{\mid s=0} \mbox{pr}_2^\pi \phi
\varphi(\gamma(s))  \rangle
\\
  & = &- \langle  \xi^G_{X} , \nabla_X  \mbox{pr}_2^\pi \phi \varphi \rangle.
 \end{eqnarray*}
From this, and taking into account that $\mbox{pr}_2^\pi \phi
\varphi = \varphi$, we will get the required identity
\begin{equation} \label{difener}
d{\mathcal E}_{\sigma}(\varphi) =   - \int_{M} \langle \xi^G ,
\nabla  \varphi \rangle dv.
\end{equation}
On the other hand, we have the equality
$$
\langle \xi^G , \nabla \varphi \rangle = \mbox{div}
(\xi^G)^{\mbox{t}} \varphi + \langle d^* \xi^G ,  \varphi \rangle,
$$
where $\mbox{t}$ means the {\it transpose} operator which is
applied to any section $\Psi \in \Gamma^{\infty} (\mbox{T}^{*} M
\otimes \lie{so}(M))$ and defined  by $\Psi^{\mbox{t}} :
\lie{so}(M) \to \lie{X}(M)$, $\langle \Psi^{\mbox{t}} \varphi, X
\rangle = \langle \Psi_X , \varphi \rangle$. Using last identity
in Equation \eqref{difener}, we will finally  have the another
expression for $d{\mathcal E}_{\sigma}(\varphi)$ required in
Theorem.
\end{proof}
\begin{theorem} \label{carharm}
Under the same assumptions as in Theorem \ref{firstvar}, the
following conditions are equivalent:
\begin{enumerate}
  \item[{\rm (i)}]$\sigma$ is a critical point for the energy functional
   on $\Gamma^{\infty}(\mathcal{SO}(M)/G)$.
    \item[{\rm (ii)}]$d^* \xi^G=0$.
  \item[{\rm (iii)}]$(\nabla^G_{e_i} \xi^G)_{e_i} = -
  \xi^G_{\xi^G_{e_i} e_i}$.
    \item[{\rm (iv)}] If $T^G$ is the torsion of the minimal connection
     $\nabla^G$, then
   \begin{enumerate}
     \item[{\rm (a)}] $\langle (\nabla_{e_i} T^G)(X,Y), e_i\rangle=0$, for all $X,Y \in
    \mathfrak{X}(M)$, and
     \item[{\rm (b)}]$d^* T^G$ is a skew-symmetric endomorphism,
     i.e.,  $d^* T^G \in \lie{so}(M)$.
    \end{enumerate}
\end{enumerate}
\end{theorem}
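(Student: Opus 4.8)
The plan is to run the chain (i)$\Leftrightarrow$(ii)$\Leftrightarrow$(iii) and then (ii)$\Leftrightarrow$(iv), invoking the results already established.

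\emph{(i)$\Leftrightarrow$(ii).} Under the identification $\mbox{T}_{\sigma}\Gamma^{\infty}(\mathcal{SO}(M)/G)\cong\Gamma^{\infty}(\lie{m}_{\sigma})$, Theorem \ref{firstvar} gives $d\mathcal{E}_{\sigma}(\varphi)=-\int_{M}\langle d^*\xi^G,\varphi\rangle\,dv$ for every $\varphi\in\Gamma^{\infty}(\lie{m}_{\sigma})$. Hence $\sigma$ is critical precisely when $\int_{M}\langle d^*\xi^G,\varphi\rangle\,dv=0$ for all such $\varphi$. By Lemma \ref{coderxilem}, $d^*\xi^G$ itself lies in $\Gamma^{\infty}(\lie{m}_{\sigma})$, so taking $\varphi=d^*\xi^G$ yields $\int_{M}\|d^*\xi^G\|^2\,dv=0$, whence $d^*\xi^G=0$; the converse is immediate from the same formula.

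\emph{(ii)$\Leftrightarrow$(iii).} This is just a reading of Lemma \ref{coderxilem}: by Equation \eqref{coderxi}, $d^*\xi^G=-(\nabla^{G}_{e_i}\xi^G)_{e_i}-\xi^G_{\xi^G_{e_i}e_i}$, and the vanishing of the right-hand side is condition (iii).

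\emph{(ii)$\Leftrightarrow$(iv).} This is the substantive part. Since $\nabla^G=\nabla+\xi^G$ with $\nabla$ torsion-free, the relations \eqref{tortor} hold with $(\widetilde\nabla,\widetilde\xi,\widetilde T)$ replaced by $(\nabla^G,\xi^G,T^G)$; in particular $T^G(X,Y)=\xi^G_XY-\xi^G_YX$ and $2\langle\xi^G_XY,Z\rangle=\langle T^G(X,Y),Z\rangle-\langle T^G(Y,Z),X\rangle+\langle T^G(Z,X),Y\rangle$. The idea is to differentiate the second identity covariantly, contract the derivative index $e_i$ successively with the first and with the second argument, and use that $(\nabla_{e_i}\xi^G)_X$ and $(\nabla_{e_i}T^G)(e_i,\cdot)$ are skew-symmetric endomorphisms. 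This produces two scalar bilinear-form identities relating $d^*\xi^G$ on one side to, on the other side, the endomorphism $d^*T^G$ together with the bilinear form $\Theta(X,Y):=\langle(\nabla_{e_i}T^G)(X,Y),e_i\rangle$ that enters (a). Decomposing $d^*T^G$ and $\Theta$ into symmetric and skew parts and converting the $\nabla$-contractions back into $\nabla^G$-contractions (so that the purely algebraic, quadratic-in-$\xi^G$ terms of Lemma \ref{coderxilem} are accounted for), one reads off that $d^*\xi^G=0$ holds if and only if $\Theta\equiv 0$ and the symmetric part of $d^*T^G$ vanishes, i.e. if and only if (a) and (b) hold.

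The step I expect to be the main obstacle is exactly this last disentangling: $\nabla T^G$ (equivalently $\nabla\xi^G$) has three a priori independent metric traces, and the content of (iv) is that the two of them controlling (a) and the symmetric part of $d^*T^G$ force the third one, which is $d^*\xi^G$, to vanish. Making this precise is a matter of careful index bookkeeping through the dictionary \eqref{tortor}, with the skew-symmetry of $(\nabla_{e_i}\xi^G)_X$ and the automatic membership $d^*\xi^G\in\lie{m}_{\sigma}$ doing the real work and, in particular, ensuring that no term involving the Riemann curvature of $M$ survives in the final identity. The remaining verifications are routine.
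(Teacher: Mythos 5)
Your handling of (i)$\Leftrightarrow$(ii) and (ii)$\Leftrightarrow$(iii) is correct and coincides with the paper's argument: the first variation formula together with Lemma \ref{coderxilem} (which guarantees that $d^*\xi^G$ is itself an admissible test section in $\Gamma^{\infty}(\lie{m}_{\sigma})$), and Equation \eqref{coderxi}.

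The step (ii)$\Leftrightarrow$(iv) is where the proposal breaks down, and you have in effect conceded the point by calling it ``the main obstacle'' and then declaring it routine bookkeeping. Differentiating the second line of \eqref{tortor} and contracting yields only the linear dictionary between the two independent traces of $\nabla\xi^G$ --- namely $\langle d^*\xi^G(Y),Z\rangle=-\langle(\nabla_{e_i}\xi^G)_{e_i}Y,Z\rangle$, which is automatically skew in $(Y,Z)$, and $B(Y,Z):=\langle(\nabla_{e_i}\xi^G)_{Y}e_i,Z\rangle$, which has no a priori symmetry --- and the two traces of $\nabla T^G$: with the natural reading $(d^*T^G)(Y)=-(\nabla_{e_i}T^G)(e_i,Y)$ one finds
$\langle d^*T^G(Y),Z\rangle=\langle d^*\xi^G(Y),Z\rangle+B(Y,Z)$ and $\langle(\nabla_{e_i}T^G)(Y,Z),e_i\rangle=-B(Y,Z)+B(Z,Y)$.
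Hence (iv)(a) says $B$ is symmetric, (iv)(b) says $B$ is skew, and (a) together with (b) is equivalent to $B=0$, i.e.\ to $d^*T^G=d^*\xi^G$ --- a condition on the \emph{other} trace, which is not converted into $d^*\xi^G=0$ by any symmetric/skew decomposition; no curvature term ever appears, so there is nothing for your skew-symmetry observations to cancel. (Your parenthetical claim that $(\nabla_{e_i}T^G)(e_i,\cdot)$ is a skew-symmetric endomorphism is precisely condition (b) and may not be assumed.) That the two traces really are independent can be seen on the Hopf vector field $V$ on $S^3$ regarded as an $\SO(2)$-structure: there $d^*\xi^G=0$, yet $d^*T^G=3\langle\cdot\,,V\rangle V-\mathrm{Id}$ is symmetric and nonzero. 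So the route you describe (which is also the one-line route the paper itself indicates) cannot be completed as stated; establishing (ii)$\Leftrightarrow$(iv) would require an input beyond the purely algebraic relation between $\xi^G$ and $T^G$.
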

\begin{proof} An immediate consequence of Theorem \ref{firstvar}
and Lemma \ref{coderxilem} is that conditions (i) and (ii) are
equivalent. The equivalence of (iii) follows from Equation
\eqref{coderxi}. Finally, the equivalence of the conditions in
(iv) is a direct consequence of the identity
$$
2 \langle (\nabla_X \xi^G)_Y Z , U \rangle =  \langle Y, (\nabla_X
T^G)( Z , W) \rangle -  \langle Z, (\nabla_X T^G)( W , Y) \rangle
+  \langle W, (\nabla_X T^G)( Y , Z) \rangle.
$$
\end{proof}
For general  Riemannian manifolds $(M,\langle \cdot ,\cdot
\rangle)$, not necessarily closed and oriented,  we will say that
 a $G$-structure $\sigma$ is {\em harmonic}, if it satisfies $d^* \xi^G=0$.
\begin{theorem}[The second variation formula] \label{secondvar}
With the same assumptions as in Theorem \ref{firstvar}, if
$\sigma$ is a harmonic $G$-structure,  then the Hessian form
$({\rm Hess}\;{\mathcal E})_{\sigma}$ on $
\Gamma^{\infty}(\lie{m}_{\sigma} ) \cong \mbox{T}_{\sigma}
\Gamma^{\infty} (\mathcal{SO}(M)/G)$ is given by
\[
({\rm Hess}\; {\mathcal E})_{\sigma}\varphi =  \int_{M} \left(
\|\nabla  \varphi \|^{2} - \frac12 \| [ \xi^G ,
\varphi]_{\lie{m}_{\sigma}}
  \|^2 +  \langle \nabla
\varphi , 2[ \xi^G , \varphi] -[ \xi^G ,
\varphi]_{\lie{m}_{\sigma}}\rangle\right ) dv.
\]
In particular, if $[\lie{m},\lie{m}]\subset \lie{g}$ or
equivalently $SO(n)/G$ is locally symmetric, then
\[
({\rm Hess}\; {\mathcal E})_{\sigma}\varphi =  \int_{M} \left(
\|\nabla  \varphi \|^{2} - 2 \| [ \xi^G , \varphi]\|^2 \right )
dv.
\]
\end{theorem}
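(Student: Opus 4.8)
The plan is to differentiate the first variation once more, adapting the classical second--variation (Jacobi) computation to sections of the Riemannian submersion $\pi\colon\mathcal{SO}(M)/G\to M$ and reading everything through the tools of Section~\ref{charactgstruc}. Fix a harmonic $G$--structure $\sigma$ and $\varphi\in\Gamma^{\infty}(\lie{m}_{\sigma})\cong\mbox{T}_{\sigma}\Gamma^{\infty}(\mathcal{SO}(M)/G)$, and take a smooth variation $(t,m)\mapsto\sigma_{t}(m)$ of $\sigma=\sigma_{0}$ with variation field $\varphi$. Since every $\sigma_{t}$ is a section, the curves $t\mapsto\sigma_{t}(m)$ are vertical, so along them $\frac{\nabla^{q}}{dt}$ preserves the splitting $\mathcal{V}\oplus\mathcal{H}$, $\pi$ having totally geodesic fibres. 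Starting from $\frac{d}{dt}\mathcal{E}(\sigma_{t})=\int_{M}\langle{\sf v}\,\sigma_{t*}e_{i},\frac{\nabla^{q}}{dt}\sigma_{t*}e_{i}\rangle_{\mathcal{SO}(M)/G}\,dv$ (as in the proof of Theorem~\ref{firstvar}) and differentiating at $t=0$, one commutes covariant derivatives by $\frac{\nabla^{q}}{\partial t}\frac{\nabla^{q}}{\partial s}-\frac{\nabla^{q}}{\partial s}\frac{\nabla^{q}}{\partial t}=R^{q}(\partial_{t},\partial_{s})$, with $R^{q}$ the curvature of $\nabla^{q}$, and obtains $({\rm Hess}\;\mathcal{E})_{\sigma}\varphi=\int_{M}\bigl(\sum_{i}\|{\sf v}\,\nabla^{q}_{\sigma_{*}e_{i}}\varphi\|^{2}+\sum_{i}\langle{\sf v}\,\sigma_{*}e_{i},R^{q}(\varphi,\sigma_{*}e_{i})\varphi\rangle\bigr)\,dv$; the only other term produced, which involves the second $t$--derivative of $\sigma_{t}$, equals $d\mathcal{E}_{\sigma}$ evaluated on a section of $\sigma^{*}\mathcal{V}$ and hence vanishes because $\sigma$ is harmonic.

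For the \emph{gradient term} I apply \eqref{nn} (equivalently Lemma~\ref{wood:lemma}) with $A=\sigma_{*}e_{i}$, $V=\varphi$ and $\phi\,\sigma_{*}e_{i}=-\xi^{G}_{e_{i}}$ from Theorem~\ref{siginttor}; since the pullback $\sigma^{*}\nabla^{c}$ is the restriction to $\lie{m}_{\sigma}$ of the minimal connection $\nabla^{G}$, this gives $\phi\,\nabla^{q}_{\sigma_{*}e_{i}}\varphi=\nabla^{G}_{e_{i}}\varphi-\tfrac12[\xi^{G}_{e_{i}},\varphi]_{\lie{m}}\in\lie{m}_{\sigma}$. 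I then expand $\|\cdot\|^{2}$, substitute $\nabla^{G}=\nabla+\xi^{G}$, and use that $\nabla^{G}$ preserves $\lie{g}_{\sigma}\oplus\lie{m}_{\sigma}$ — equivalently $(\nabla_{e_{i}}\varphi)_{\lie{g}_{\sigma}}=-[\xi^{G}_{e_{i}},\varphi]_{\lie{g}_{\sigma}}$ — to rewrite the sum through $\nabla\varphi$ and brackets. For the \emph{curvature term} I first note that only the vertical part ${\sf v}\,\sigma_{*}e_{i}=\phi^{-1}(-\xi^{G}_{e_{i}})$ contributes, because the totally geodesic fibre property forces $\langle R^{q}(U_{1},U_{2})U_{3},X\rangle=0$ for vertical $U_{j}$ and horizontal $X$; hence $\langle{\sf v}\,\sigma_{*}e_{i},R^{q}(\varphi,\sigma_{*}e_{i})\varphi\rangle=\langle{\sf v}\,\sigma_{*}e_{i},R^{q}(\varphi,{\sf v}\,\sigma_{*}e_{i})\varphi\rangle$, which by the Gauss equation is the intrinsic curvature of the fibre. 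Carrying this over through the isometry $\phi$, the fibre is the normal homogeneous space $SO(n)/G$, whose curvature on $X,Y\in\lie{m}$ is expressed through $[X,Y]_{\lie{m}}$ and $[X,Y]_{\lie{g}}$; putting $X=\varphi$, $Y=\xi^{G}_{e_{i}}$ and summing over $i$ then produces, modulo the curvature symmetries, contributions proportional to $\|[\xi^{G},\varphi]_{\lie{m}}\|^{2}$ and $\|[\xi^{G},\varphi]_{\lie{g}}\|^{2}$.

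Adding the two contributions and simplifying — using repeatedly $(\nabla_{e_{i}}\varphi)_{\lie{g}_{\sigma}}=-[\xi^{G}_{e_{i}},\varphi]_{\lie{g}_{\sigma}}$ (so that $\langle\nabla_{e_{i}}\varphi,[\xi^{G}_{e_{i}},\varphi]_{\lie{g}}\rangle=-\|[\xi^{G}_{e_{i}},\varphi]_{\lie{g}}\|^{2}$, which absorbs the $\lie{g}$--bracket norm into a $\langle\nabla\varphi,\cdot\rangle$ term) and the algebraic identity $[\xi^{G},\varphi]+[\xi^{G},\varphi]_{\lie{g}}=2[\xi^{G},\varphi]-[\xi^{G},\varphi]_{\lie{m}}$ — collapses everything to the asserted formula for $({\rm Hess}\;\mathcal{E})_{\sigma}\varphi$. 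For the final assertion, $[\lie{m},\lie{m}]\subset\lie{g}$ is exactly the condition that $(SO(n),G)$ be a symmetric pair, i.e.\ that $SO(n)/G$ be locally symmetric; then every bracket of two elements of $\lie{m}_{\sigma}$ lies in $\lie{g}_{\sigma}$, so $[\xi^{G},\varphi]_{\lie{m}}=0$ and $[\xi^{G},\varphi]=[\xi^{G},\varphi]_{\lie{g}}$, whence the formula reduces to $\int_{M}(\|\nabla\varphi\|^{2}+2\langle\nabla\varphi,[\xi^{G},\varphi]\rangle)\,dv=\int_{M}(\|\nabla\varphi\|^{2}-2\|[\xi^{G},\varphi]\|^{2})\,dv$. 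The main obstacle is the curvature term: one must reduce $\langle{\sf v}\,\sigma_{*}e_{i},R^{q}(\varphi,\sigma_{*}e_{i})\varphi\rangle$ to the fibre via the O'Neill/Gauss identities for totally geodesic fibres and then evaluate it using the explicit curvature of the normal homogeneous space $SO(n)/G$, keeping careful track of the $\lie{g}$-- and $\lie{m}$--parts and of the symmetries of $R^{q}$.
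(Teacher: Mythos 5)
Your overall strategy coincides with the paper's: differentiate the first variation, use the totally geodesic fibres and the symmetry of second covariant derivatives to split the integrand into a gradient term, a curvature term, and a term involving $(\sigma_t)''_{t=0}$ whose integral vanishes by harmonicity; and your treatment of the gradient term via \eqref{nn} (with $\sigma^*\nabla^c=\nabla^G$ on $\lie{m}_{\sigma}$) is exactly the paper's, giving $\|\nabla_{e_i}\varphi+\tfrac12[\xi^G_{e_i},\varphi]_{\lie{m}_{\sigma}}+[\xi^G_{e_i},\varphi]_{\lie{g}_{\sigma}}\|^2$. The reduction in the locally symmetric case is also fine: there $[\xi^G,\varphi]_{\lie{m}_{\sigma}}=0$ and $\langle\nabla\varphi,[\xi^G,\varphi]\rangle=-\|[\xi^G,\varphi]\|^2$ because $\nabla\varphi=\nabla^G\varphi-[\xi^G,\varphi]$ with $\nabla^G\varphi\in\Gamma^{\infty}(\lie{m}_{\sigma})$.

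The genuine gap is the curvature term, which is precisely the step you defer. You never produce the coefficients of $\|[\xi^G_{e_i},\varphi]_{\lie{m}_{\sigma}}\|^2$ and $\|[\xi^G_{e_i},\varphi]_{\lie{g}_{\sigma}}\|^2$, yet these coefficients are the whole content of the formula: the $-\tfrac12\|[\xi^G,\varphi]_{\lie{m}_{\sigma}}\|^2$ in the statement arises only from the mismatch between the $+\tfrac14\|[\xi^G_{e_i},\varphi]_{\lie{m}_{\sigma}}\|^2$ produced by expanding the gradient term and the $\lie{m}$-part of the curvature term. Moreover, the recipe you propose --- Gauss equation for the totally geodesic fibre plus the textbook curvature of the normal homogeneous space $SO(n)/G$, namely $\langle R(a,b)b,a\rangle=\tfrac14\|[a,b]_{\lie{m}}\|^2+\|[a,b]_{\lie{g}}\|^2$ --- yields a curvature contribution $-\tfrac14\|[\xi^G_{e_i},\varphi]_{\lie{m}_{\sigma}}\|^2-\|[\xi^G_{e_i},\varphi]_{\lie{g}_{\sigma}}\|^2$, and adding this to the expanded gradient term gives $\|\nabla\varphi\|^2+\langle\nabla\varphi,2[\xi^G,\varphi]-[\xi^G,\varphi]_{\lie{m}_{\sigma}}\rangle$ with \emph{no} $\|[\xi^G,\varphi]_{\lie{m}_{\sigma}}\|^2$ term at all. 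The paper instead computes the term directly, expanding $\phi\,{\sf v}\nabla^q_{\varphi}{\sf v}\nabla^q_{{\sf v}\sigma_*X}\varphi$ and $\phi\,{\sf v}\nabla^q_{[\varphi,\sigma_*X]}\varphi$ through \eqref{nn} with the specific fibrewise extensions of $\varphi$ and ${\sf v}\sigma_*X$, and obtains $-\tfrac34\|[\xi^G_{e_i},\varphi]_{\lie{m}_{\sigma}}\|^2-\|[\xi^G_{e_i},\varphi]_{\lie{g}_{\sigma}}\|^2$, which is what produces the stated $-\tfrac12$ coefficient. So your method, executed as you describe it, does not reproduce the theorem; to close the argument you must either carry out the explicit expansion as in the paper or account for where your fibre-curvature reduction loses the extra $\tfrac12\|[\xi^G,\varphi]_{\lie{m}_{\sigma}}\|^2$ (the discrepancy hides in identifying $R^q(\varphi,{\sf v}\sigma_*X)\varphi$, computed on the particular non-coordinate extensions and with the paper's sign convention $R^q(A,B)=\nabla^q_{[A,B]}-[\nabla^q_A,\nabla^q_B]$, with an intrinsic fibre curvature). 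As written, the decisive computation is asserted rather than proved.
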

\begin{proof} From results contained in the proof of
Theorem \ref{firstvar}, relative to the first variation formula,
we have
\begin{eqnarray*}
\frac{\textstyle d}{\textstyle dt}_{\mid t=0} d{\mathcal
E}_{\sigma_{t}}(\varphi)  & = & \int_{M} \frac{\textstyle
d}{\textstyle dt}_{\mid t=0} \langle  {\sf v} \, \sigma_{t*}  ,
\frac{\nabla^q}{dt}_{\mid t=t}   \, \sigma_{t*}
\rangle_{\mathcal{SO}(M)/G} dv.
\end{eqnarray*}
But using the same arguments as in the referred proof, we will get
$$
\frac{\textstyle d}{\textstyle dt}_{\mid t=0} \langle  {\sf v} \,
\sigma_{t*} X , \frac{\nabla^q}{dt}_{\mid t=t}    \, \sigma_{t*} X
\rangle_{\mathcal{SO}(M)/G} = \frac{\textstyle d}{\textstyle
dt}_{\mid t=0} \langle  {\sf v} \, \sigma_{t*X}
,\frac{\nabla^q}{ds}_{\mid s=0} \varphi_t(\gamma(s))
\rangle_{\mathcal{SO}(M)/G},
$$
where $s \to \gamma (s)$ is a curve in $M$ such that $\gamma(0)=m$
and $\gamma'(0)=X$, $(\sigma_t)'_{t=t_0} (m) = \varphi_{t_0}(m)$,
and $\frac{\nabla^q}{ds}_{\mid s=0}$ is the covariant derivative
along the curve $s \to \sigma_t(\gamma(s))$. From last identity,
using that the fibers are totally geodesic, it follows
 \begin{eqnarray*}
\frac{\textstyle d}{\textstyle dt}_{\mid t=0} \langle  {\sf v} \,
\sigma_{t*} X , \frac{\nabla^q}{dt}_{\mid t=t}    \, \sigma_{t*} X
\rangle_{\mathcal{SO}(M)/G} & = &   \| {\sf v}
\frac{\nabla^q}{ds}_{\mid s=0} \varphi (\gamma(s)) \|^2_{\mathcal{SO}(M)/G}, \\
&&
 +  \langle  {\sf v} \, \sigma_{*} X
, \frac{\nabla^q}{dt}_{\mid t=0} \frac{\nabla^q}{ds}_{\mid s=0}
\varphi_t(\gamma(s)) \rangle_{\mathcal{SO}(M)/G}.
\end{eqnarray*}
Now, from \eqref{nn}, the first summand is expressed as
\begin{equation} \label{secondvar:1}
 \| {\sf v}
\frac{\nabla^q}{ds}_{\mid s=0} \varphi (\gamma(s))
\|^2_{\mathcal{SO}(M)/G} =  \| \phi \frac{\nabla^q}{ds}_{\mid s=0}
\varphi (\gamma(s)) \|^2 = \|  \nabla_X  \varphi + \frac12[\xi^G_X
,  \varphi]_{\lie{m}_{\sigma}} + [\xi^G_X ,
\varphi]_{\lie{g}_{\sigma}}   \|^2
\end{equation}
and the second summand can be given by
\begin{eqnarray*}
\langle  {\sf v} \, \sigma_{*} X , \frac{\nabla^q}{dt}_{\mid t=0}
\frac{\nabla^q}{ds}_{\mid s=0} \varphi_t(\gamma(s))
\rangle_{\mathcal{SO}(M)/G} & = & \langle  {\sf v} \, \sigma_{*} X
,
 \frac{\nabla^q}{ds}_{\mid s=0} \frac{\nabla^q}{dt}_{\mid t=0}
\varphi_t(\gamma(s)) \rangle_{\mathcal{SO}(M)/G}\\
& & + \langle   R^q{( \varphi (m), {\sf v} \sigma_{*} X)}
\varphi(m), {\sf v} \, \sigma_{*} X
 \rangle_{\mathcal{SO}(M)/G},
\end{eqnarray*}
where $R^q{(A,B)} = \nabla^q_{[A,B]} - [\nabla^q_A , \nabla^q_B]$
 is the Riemannian curvature tensor of $\langle \cdot , \cdot
\rangle_{\mathcal{SO}(M)/G}$ and we have used that $\pi$ has
totally geodesic fibres. On one hand, by using similar arguments
as in the proof of Theorem \ref{firstvar}, we get
\begin{eqnarray} \label{secondvar:2}
\langle  {\sf v} \, \sigma_{*} X ,
 \frac{\nabla^q}{ds}_{\mid s=0} \frac{\nabla^q}{dt}_{\mid t=0}
\varphi_t(\gamma(s)) \rangle_{\mathcal{SO}(M)/G} & = &  \langle
\phi \, \sigma_{*} X , \phi  \frac{\nabla^q}{ds}_{\mid s=0}
(\sigma_t)''_{t=0}(\gamma(s)) \rangle \\
& = & - \langle \xi_X^G , \nabla_X \mbox{pr}^{\pi}_2
(\sigma_t)''_{t=0} (m) \rangle. \nonumber
\end{eqnarray}
Additionally, since $\sigma$ is harmonic, $d^* \xi^G=0$,  we have
the  identity
\begin{equation} \label{secondvar:3}
\langle \xi^G , \nabla \mbox{pr}^{\pi}_2 (\sigma_t)''_{t=0} (m)
\rangle = \mbox{div} (\xi^G)^{\mbox{t}} \mbox{pr}^{\pi}_2
(\sigma_t)''_{t=0} (m).
\end{equation}
On the other hand, in order to compute $\langle   R^q{( \varphi
(m), {\sf v} \sigma_{*} X)} \varphi(m), {\sf v} \, \sigma_{*} X
 \rangle_{\mathcal{SO}(M)/G},$ note that the ${\sf v} \nabla^q_{\varphi} \psi$ is a well defined
connection on the fibres of $\pi$. In our case, ${\sf v}
\nabla^q_{\varphi} \psi=  \nabla^q_{\varphi} \psi$ and  the
corresponding Riemannian curvature  tensor $R^{\sf v}$ is such
that $R^{\sf v}{(\varphi ,\psi_1)} \psi_2 = R^q{(\varphi ,\psi_1)}
\psi_2$. Therefore,
\begin{eqnarray*} \langle   R^q{( \varphi (m), {\sf v} \sigma_{*} X)}
\varphi(m), {\sf v} \, \sigma_{*} X
 \rangle_{\mathcal{SO}(M)/G} &  = & \langle  R^{\sf v}{( \varphi (m), {\sf v} \sigma_{*} X)} \varphi(m) ,
 {\sf v} \, \sigma_{*} X
 \rangle_{\mathcal{SO}(M)/G} .
\end{eqnarray*}
Now, using \eqref{nn}, we get
\begin{eqnarray*}
 \phi {\sf v} \nabla^q_{\varphi(m)} {\sf v} \nabla^q_{{\sf v} \sigma_{*} X} \varphi & = &
 \frac14 [\phi \varphi , [\phi \sigma_{*} X ,
 \phi \varphi]_{\lie{m}} ]_{\lie{m}}
 + \frac12 [\phi \varphi , [\phi \sigma_{*} X ,
 \phi \varphi]_{\lie{g}} ]_{\lie{m}} \\
 &&
  + \frac12  [\phi \varphi ,[\phi \sigma_{*} X,
 \phi \varphi]_{\lie{m}}]_{\lie{g}}
    +   [\phi \varphi ,[\phi \sigma_{*} X,
 \phi \varphi]_{\lie{g}}]_{\lie{g}}, \\
\phi {\sf v} \nabla^q_{{\sf v} \sigma_{*} X} {\sf v}
\nabla^q_{\varphi(m)} \varphi & = & 0, \\
 \phi  {\sf v} \nabla^q_{[\varphi, \sigma_{*} X]} \varphi & = &
-  \frac12 [\phi \varphi , [\phi \sigma_{*} X ,
 \phi \varphi]_{\lie{m}} ]_{\lie{m}}
 - \frac12 [\phi \varphi , [\phi  \sigma_{*} X ,
 \phi \varphi]_{\lie{g}} ]_{\lie{m}},\\
& &-   [\phi \varphi , [\phi \sigma_{*} X ,
 \phi \varphi]_{\lie{m}} ]_{\lie{g}}
 -  [\phi \varphi , [\phi  \sigma_{*} X ,
 \phi \varphi]_{\lie{g}} ]_{\lie{g}}.
\end{eqnarray*}
From these identities and because with the induced metric by the
inner product $\langle \cdot,\cdot \rangle$ on ${\mathfrak s
\mathfrak o}(n)$ is bi-invariant, it is not hard to deduce
\begin{eqnarray} \label{secondvar:4}
 \qquad \quad\langle  R^{\sf v}{( \varphi (m), {\sf v}
\sigma_{*} X)} \varphi(m) ,
 {\sf v} \, \sigma_{*} X
 \rangle_{\mathcal{SO}(M)/G}   & = & - \frac34 \| [\xi_X^G ,
 \varphi]_{\lie{m}_{\sigma}} \|^2
 - \langle  [  \varphi , [ \xi_X^G ,
  \varphi]_{\lie{g}_{\sigma}} ]_{\lie{m}_{\sigma}} , \xi^G_X
  \rangle\\
\nonumber  & = &- \frac34 \| [\xi_X^G ,
 \varphi]_{\lie{m}_{\sigma}} \|^2
 - \|[\xi_X^G ,
 \varphi]_{\lie{g}_{\sigma}} \|^2.
\end{eqnarray}
The required formula for the second variation follows from
\eqref{secondvar:1}, \eqref{secondvar:2}, \eqref{secondvar:3} and
\eqref{secondvar:4}. For the last part of the theorem, we use that
$\nabla \varphi = \nabla^{G}\varphi - [\xi,\varphi]$ and
$\nabla^{G}\varphi\in \Gamma^{\infty}(\lie{m}_{\sigma}).$
\end{proof}

For studying harmonicity as a map of $G$-structures, we need to
consider   $\nabla \sigma_* $, where $\left( \nabla_X \sigma_*
\right)(Y) = \nabla^q_{X} \sigma_* Y - \sigma_* (\nabla_X Y),$ for
all $X,Y\in \mathfrak{X}(M).$ Here as before, $\nabla^q$ also
denotes the induced connection on $\sigma^*T\mathcal{SO}(M)/G.$
\begin{lemma} \label{curvphi1} If $R{(X,Y)} = \nabla_{[X,Y]} - [ \nabla_X , \nabla_Y]$ is the
curvature Riemannian tensor of $(M , \langle \cdot , \cdot
\rangle)$ and $\sigma$ is a $G$-structure on $M$, then
$$
\sigma^* \Phi (X,Y) =  - (\nabla_{X} \xi^G)_Y + (\nabla_{Y}
\xi^G)_X - 2[\xi^G_X , \xi^G_Y] + [ \xi_X^G , \xi^G_Y
]_{\lie{m}_{\sigma}} = - R(X,Y)_{\lie{m}_{\sigma}}.
$$
\end{lemma}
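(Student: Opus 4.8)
The plan is to evaluate the two-form $\Phi$, defined on $\mathcal{SO}(M)/G$ in terms of the $\lie m$-component of the curvature form $\Omega$ of the Levi-Civita connection, along the section $\sigma$, and to match it up with the curvature of $M$ via the structure equation. Recall from the text that for horizontal vectors $\Phi(H,K)=\phi\pi_{G*}\,d\omega(\tilde H,\tilde K)^{*}_{\lie m}$, and that $\Omega=d\omega+\tfrac12[\omega,\omega]$. First I would pull back along $\sigma$: using a local frame field $\wp=\{e_1,\dots,e_n\}:U\to\mathcal G(M)$ adapted to the $G$-structure, so that $\sigma_{*}=\pi_{G*}\circ\wp_{*}$, I would compute $\sigma^{*}\Phi(X,Y)=\phi\pi_{G*}\bigl(\Omega(\wp_{*}X,\wp_{*}Y)\bigr)^{*}_{\lie m}$ and express $\wp^{*}\Omega$ in terms of the curvature $R$ of $(M,\inp{\cdot}{\cdot})$, exactly as in the identification \eqref{matrix1}: $\wp^{*}\Omega(X,Y)$ is the matrix of $R(X,Y)\in\lie{so}(M)$ in the adapted frame. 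Hence $\sigma^{*}\Phi(X,Y)=-\bigl(R(X,Y)\bigr)_{\lie m_{\sigma}}$, where the sign comes from matching orientation conventions as in the derivation of $\phi\sigma_{*}X=-\xi^{G}_{X}$ in Theorem \ref{siginttor}. This settles the second of the two claimed equalities; it is essentially a bookkeeping exercise once the conventions are pinned down.

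The heart of the proof is the first equality, relating $R(X,Y)_{\lie m_{\sigma}}$ to the intrinsic torsion. The key observation is that the minimal connection $\nabla^{G}=\nabla+\xi^{G}$ has curvature $R^{G}(X,Y)=R(X,Y)+(\nabla_{X}\xi^{G})_{Y}-(\nabla_{Y}\xi^{G})_{X}+[\xi^{G}_{X},\xi^{G}_{Y}]$ (the standard formula for the curvature of a connection perturbed by a one-form with values in $\lie{so}(M)$, using that $\xi^{G}$ is a tensor and $\nabla$ is torsion-free so the bracket of the frame fields drops out symmetrically). Since $\nabla^{G}$ is a $G$-connection, $R^{G}(X,Y)\in\lie g_{\sigma}$; therefore the $\lie m_{\sigma}$-component of the right-hand side vanishes, giving
\[
R(X,Y)_{\lie m_{\sigma}}=-\bigl((\nabla_{X}\xi^{G})_{Y}\bigr)_{\lie m_{\sigma}}+\bigl((\nabla_{Y}\xi^{G})_{X}\bigr)_{\lie m_{\sigma}}-\bigl[\xi^{G}_{X},\xi^{G}_{Y}\bigr]_{\lie m_{\sigma}}.
\]
Now, because $\nabla^{G}$ preserves $G$-type (as used already in Lemma \ref{coderxilem}), $(\nabla_{X}\xi^{G})_{Y}=(\nabla^{G}_{X}\xi^{G})_{Y}-[\xi^{G}_{X},\xi^{G}_{Y}]$, and $(\nabla^{G}_{X}\xi^{G})_{Y}\in\lie m_{\sigma}$, so $\bigl((\nabla_{X}\xi^{G})_{Y}\bigr)_{\lie m_{\sigma}}=(\nabla_{X}\xi^{G})_{Y}+[\xi^{G}_{X},\xi^{G}_{Y}]_{\lie g_{\sigma}}$. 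Substituting this (and the analogous expression with $X,Y$ swapped) and collecting terms — using $[\xi^{G}_{X},\xi^{G}_{Y}]=[\xi^{G}_{X},\xi^{G}_{Y}]_{\lie g_{\sigma}}+[\xi^{G}_{X},\xi^{G}_{Y}]_{\lie m_{\sigma}}$ and the antisymmetry of the bracket in $X\leftrightarrow Y$ after the swap — yields
\[
R(X,Y)_{\lie m_{\sigma}}=-(\nabla_{X}\xi^{G})_{Y}+(\nabla_{Y}\xi^{G})_{X}-2[\xi^{G}_{X},\xi^{G}_{Y}]+[\xi^{G}_{X},\xi^{G}_{Y}]_{\lie m_{\sigma}},
\]
which is exactly the negative of the claimed formula for $\sigma^{*}\Phi(X,Y)$, completing the proof.

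The main obstacle, as usual in this circle of ideas, is sign and convention tracking: there are at least three independent sign conventions in play (the curvature $R(X,Y)=\nabla_{[X,Y]}-[\nabla_X,\nabla_Y]$, the identification $\phi\sigma_{*}=-\xi^{G}$, and the definition of $\Phi$ via $\phi\pi_{G*}\Omega^{*}_{\lie m}$), and one must be careful that the curvature-form identity $\wp^{*}\Omega(X,Y)=\bigl($matrix of $R(X,Y)\bigr)$ carries the correct sign relative to the convention for $R$ chosen here. I would verify this by comparing with the already-established computation $\phi\sigma_{*}X=-\xi^{G}_{X}$, which fixes the relation between $\omega_{\lie m}$ and $\xi^{G}$, and then differentiating formally. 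The algebraic collection in the last step is routine but needs the $\lie g$/$\lie m$ splitting applied consistently; everything else is the standard perturbed-connection curvature identity together with the $G$-invariance of $\nabla^{G}$.
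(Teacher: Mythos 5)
Your strategy coincides with the paper's: identify $\sigma^{*}\Phi(X,Y)$ with $-R(X,Y)_{\lie{m}_{\sigma}}$ by pulling back the curvature form in an adapted frame, then compare $R$ with the curvature $R^{G}$ of the minimal connection and use $R^{G}(X,Y)\in\lie{g}_{\sigma}$ to isolate the $\lie{m}_{\sigma}$-component. However, your key perturbed-curvature formula has the wrong sign for the convention in force. With $R(X,Y)=\nabla_{[X,Y]}-[\nabla_{X},\nabla_{Y}]$ and $\nabla^{G}=\nabla+\xi^{G}$ a direct computation gives
\[
R^{G}(X,Y)=R(X,Y)-(\nabla_{X}\xi^{G})_{Y}+(\nabla_{Y}\xi^{G})_{X}-[\xi^{G}_{X},\xi^{G}_{Y}],
\]
i.e.\ $R=R^{G}+(\nabla_{X}\xi^{G})_{Y}-(\nabla_{Y}\xi^{G})_{X}+[\xi^{G}_{X},\xi^{G}_{Y}]$; the formula you wrote is the one appropriate to the opposite convention $R(X,Y)=[\nabla_{X},\nabla_{Y}]-\nabla_{[X,Y]}$. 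Projecting with the correct sign yields $R(X,Y)_{\lie{m}_{\sigma}}=(\nabla_{X}\xi^{G})_{Y}-(\nabla_{Y}\xi^{G})_{X}+2[\xi^{G}_{X},\xi^{G}_{Y}]-[\xi^{G}_{X},\xi^{G}_{Y}]_{\lie{m}_{\sigma}}$, which combined with $\sigma^{*}\Phi=-R_{\lie{m}_{\sigma}}$ gives the lemma. As written, your conclusion is internally inconsistent: you derive $R(X,Y)_{\lie{m}_{\sigma}}$ \emph{equal to} (not the negative of) the displayed expression for $\sigma^{*}\Phi(X,Y)$, which together with $\sigma^{*}\Phi=-R_{\lie{m}_{\sigma}}$ would force that expression to vanish identically. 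This is exactly the sign trap you flagged at the end, and it does bite.

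A smaller point: the identity $(\nabla_{X}\xi^{G})_{Y}=(\nabla^{G}_{X}\xi^{G})_{Y}-[\xi^{G}_{X},\xi^{G}_{Y}]$ omits the term $+\xi^{G}_{\xi^{G}_{X}Y}$ coming from the action of $\xi^{G}_{X}$ on the one-form slot of $\xi^{G}$. Since that term lies in $\lie{m}_{\sigma}$, the consequence you actually use, namely $\bigl((\nabla_{X}\xi^{G})_{Y}\bigr)_{\lie{m}_{\sigma}}=(\nabla_{X}\xi^{G})_{Y}+[\xi^{G}_{X},\xi^{G}_{Y}]_{\lie{g}_{\sigma}}$, remains valid, but the intermediate identity should be corrected (the paper keeps these terms explicitly, as $\xi^{G}_{\xi^{G}_{X}Y}-\xi^{G}_{\xi^{G}_{Y}X}$, in its formula relating $R$ and $R^{G}$). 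The first equality $\sigma^{*}\Phi=-R_{\lie{m}_{\sigma}}$ is treated exactly as in the paper, which simply invokes the structure equation in an adapted local section, citing Lawson--Michelsohn.
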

\begin{proof} If $\wp : U \to \mathcal G(M)$ is a local section of
the reduced bundle $\mathcal G(M) \subseteq \mathcal{SO}(M)$, then
$$
\Phi_{\sigma(m)} (\sigma_* X, \sigma_* Y) = \phi \pi_{G*\wp(m)}
\Omega ( \wp_*{X}, \wp_*Y)^*_{\lie{m}} = (\sigma(m) , -
R(X,Y)_{\lie{m}_{\sigma}} ) = - R(X,Y)_{\lie{m}_{\sigma}}
$$
(see \cite[Proposition 4.5]{LM}). Now, if we use $\nabla^G =
\nabla + \xi^G$ in the expression for $R$, it is not hard to see
that
$$
R{(X,Y)} = R^G{(X,Y)} + (\nabla^G_{X} \xi^G)_Y - (\nabla^G_{Y}
\xi^G)_X + \xi^G_{ \xi^G_X Y}  - \xi^G_{\xi^G_YX} - [ \xi^G_X ,
\xi^G_Y ],
$$
where $ R^G{(X,Y)} = \nabla^G_{[X,Y]} - [\nabla_X^G , \nabla_Y^G
]$. Finally, since $R^G \in \Lambda^2 T^* M \otimes
\lie{g}_{\sigma}$, $\xi^G \in T^* M \otimes \lie{m}_{\sigma}$ and
$\nabla^G$ is a $G$-connection, we get
\begin{eqnarray*}
R(X,Y)_{\lie{m}_{\sigma}} & = & (\nabla^G_{X} \xi^G)_Y -
(\nabla^G_{Y} \xi^G)_X + \xi^G_{ \xi^G_X Y}  - \xi^G_{\xi^G_YX} -
[ \xi^G_X , \xi^G_Y ]_{\lie{m}_{\sigma}} \\
 & = & (\nabla_{X} \xi^G)_Y -
(\nabla_{Y} \xi^G)_X  + 2 [\xi^G_X , \xi^G_Y] - [ \xi^G_X ,
\xi^G_Y ]_{\lie{m}_{\sigma}}.
\end{eqnarray*}
From all of this, Lemma follows. Finally, note also that
$R(X,Y)_{\lie{g}_{\sigma}} = R^G{(X,Y)} -  [ \xi^G_X , \xi^G_Y
]_{\lie{g}_{\sigma}}$.
\end{proof}
If $\sigma^{*}\Phi=0$, the $G$-structure $\sigma$ is referred as
{\it flat $G$-structure}. By the final remark in the proof of last
Lemma, this notion is characterised by $R{(X,Y)} = R^G{(X,Y)}  - [
\xi^G_X , \xi^G_Y ]_{\lie{g}_{\sigma}} \in S^2 \lie{g}_{\sigma}$.
Therefore, the intrinsic torsion of a flat $G$-structure  has not
contributions in the $G$-components of $R$ orthogonal to $S^2
\lie{g}_{\sigma}$. Thus, $R$ is in the space of algebraic
curvature tensors for manifolds with parallel $G$-structure.


Now we have the tools to show some  results (Theorem
\ref{verthor}, Theorem \ref{harmmap1} and Theorem \ref{superflat})
 which are versions of Wood's
results given  in \cite{Wood2}, expressed in terms of the
intrinsic torsion $\xi^G$ and the Riemannian curvature tensor $R$.
But we firstly recall that a $G$-structure $\sigma$ is said to be
{\it totally geodesic},  if $\nabla \sigma_* =0$. In such a
situation, $\sigma(M)$ is a totally geodesic submanifold of
$\mathcal{SO}(M)/G$. Weaker conditions can be considered by saying
that a $G$-structure $\sigma$ is {\it vertically geodesic} (resp.,
{\it horizontally geodesic}), if the vertical component (resp.,
{\it horizontal component}) of $\nabla \sigma_*$ vanishes.  In
these situations, $\sigma$ send geodesics to path with horizontal
(resp., vertical) acceleration.
\begin{theorem} \label{verthor} If $\sigma$ is a $G$-structure on $(M,\langle
\cdot , \cdot \rangle)$, then:
\begin{enumerate}
 \item[{\rm (a)}] $\phi (\nabla_X \sigma_*)Y = - \frac12 \left( (\nabla_{X} \xi^G)_Y +
 (\nabla_{Y} \xi^G)_X \right)$. Therefore, $\sigma$ is vertically geodesic if and only if
 $(\nabla_{X} \xi^G)_X = 0.$ In particular,  if $\sigma$ is
 vertically geodesic, then $\sigma$ is a harmonic $G$-structure.
  \item[{\rm (b)}] $ 2 \langle \pi_* (\nabla_X \sigma_*)Y , Z
  \rangle = \langle \xi^G_X , R{(Y,Z)} \rangle + \langle \xi^G_Y , R{(X,Z)}
  \rangle$. Therefore, $\sigma$ is horizontally  geodesic if and only
  if $\langle \xi^G_X , R{(Y,Z)} \rangle$ is a skew-symmetric
  three-form. In particular, if $\sigma$ is a flat $G$-structure,
  then $\sigma$ is horizontally  geodesic.
\end{enumerate}
\end{theorem}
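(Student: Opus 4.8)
The plan is to compute the vertical and horizontal components of $\nabla\sigma_*$ separately, exploiting three facts: $\pi\colon\mathcal{SO}(M)/G\to M$ is a Riemannian submersion with totally geodesic fibres; $\phi$ restricts to an isomorphism $\mathcal{V}\to\lie{m}_{\mathcal{SO}(M)}$ and annihilates $\mathcal{H}$; and $\phi\,\sigma_*=-\xi^G$ by Theorem \ref{siginttor}. Here and below $(\nabla_X\sigma_*)Y=\nabla^q_{\sigma_*X}\sigma_*Y-\sigma_*\nabla_XY$, where $\nabla^q_{\sigma_*X}\sigma_*Y$ denotes the covariant derivative along the curve $s\mapsto\sigma(\gamma(s))$, $\gamma'(0)=X$.

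For part (a), since $\phi$ kills horizontal vectors one has $\phi(\nabla_X\sigma_*)Y=\phi\,{\sf v}(\nabla_X\sigma_*)Y=\phi(\nabla^q_{\sigma_*X}\sigma_*Y)-\phi(\sigma_*\nabla_XY)$, the last term being $\xi^G_{\nabla_XY}$. To the first term I would apply Lemma \ref{wood:lemma}(ii) with $A=\sigma_*X$, $B=\sigma_*Y$ (legitimate since that identity is tensorial in its first argument and first order in the second), getting $\phi(\nabla^q_{\sigma_*X}\sigma_*Y)=\nabla^c_{\sigma_*X}(-\xi^G_Y)+\tfrac12\big([\xi^G_X,\xi^G_Y]_{\lie{m}}-\sigma^*\Phi(X,Y)\big)$. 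One then uses that the pull-back of the canonical connection $\nabla^c$ along $\sigma$ is exactly the connection induced on $\lie{m}_\sigma$ by the minimal connection (their connection forms agree on $\mathcal{G}(M)$, by the relation between $\omega^G$ and the $\lie{g}$-part of $\omega$ recalled in the Preliminaries), so that $\nabla^c_{\sigma_*X}\xi^G_Y=(\nabla^G_X\xi^G)_Y+\xi^G_{\nabla^G_XY}$. Substituting $\nabla^G=\nabla+\xi^G$ to rewrite $(\nabla^G_X\xi^G)_Y$ and $\xi^G_{\nabla^G_XY}$ in terms of $\nabla$, and then inserting the expression for $\sigma^*\Phi(X,Y)$ from Lemma \ref{curvphi1}, all the quadratic terms $[\xi^G_X,\xi^G_Y]$, $[\xi^G_X,\xi^G_Y]_{\lie{m}}$, $\xi^G_{\xi^G_XY}$ cancel and one is left with $\phi(\nabla_X\sigma_*)Y=-\tfrac12\big((\nabla_X\xi^G)_Y+(\nabla_Y\xi^G)_X\big)$. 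Vertical geodesy means this vanishes for all $X,Y$; polarizing ($X=Y$ and back) this is equivalent to $(\nabla_X\xi^G)_X=0$ for all $X$; and tracing over an orthonormal frame gives $d^*\xi^G=-(\nabla_{e_i}\xi^G)_{e_i}=0$, so $\sigma$ is harmonic.

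For part (b), write $\sigma_*X=H_X+V_X$ with $H_X={\sf h}\,\sigma_*X$ the horizontal lift of $X$ and $V_X={\sf v}\,\sigma_*X$, $\phi V_X=-\xi^G_X$. Using the standard O'Neill identities for a Riemannian submersion with totally geodesic fibres --- $\nabla^q_VW$ is vertical for $V,W$ vertical, ${\sf h}\nabla^q_{H_X}H_Y=H_{\nabla_XY}$ for basic lifts, and ${\sf h}\nabla^q_{H_X}V_Y={\sf h}\nabla^q_{V_Y}H_X=A_{H_X}V_Y$ (the bracket of a vertical field with a basic field being vertical) --- one finds ${\sf h}(\nabla_X\sigma_*)Y=A_{H_X}V_Y+A_{H_Y}V_X$. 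Pairing with $H_Z$, using skew-symmetry of the O'Neill tensor $\langle A_{H_X}V_Y,H_Z\rangle=-\langle V_Y,A_{H_X}H_Z\rangle$ together with $A_{H_X}H_Z=\tfrac12\,{\sf v}[H_X,H_Z]$, everything reduces to computing $\phi\,{\sf v}[H_X,H_Z]$ along $\sigma$: the structure equation for $\omega$ gives ${\sf v}[H_X,H_Z]=-\pi_{G*}\Omega(\tilde H_X,\tilde H_Z)^*_{\lie{m}}$, hence $\phi\,{\sf v}[H_X,H_Z]=-\Phi(H_X,H_Z)$, which along $\sigma$ equals $-\sigma^*\Phi(X,Z)=R(X,Z)_{\lie{m}_\sigma}$ by Lemma \ref{curvphi1}. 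Since $\xi^G_Y\in\lie{m}_\sigma$ is orthogonal to $\lie{g}_\sigma$ one has $\langle\xi^G_Y,R(X,Z)_{\lie{m}_\sigma}\rangle=\langle\xi^G_Y,R(X,Z)\rangle$, and since $\pi_*$ is an isometry on $\mathcal{H}$, assembling yields $2\langle\pi_*(\nabla_X\sigma_*)Y,Z\rangle=\langle\xi^G_X,R(Y,Z)\rangle+\langle\xi^G_Y,R(X,Z)\rangle$. Setting $F(X,Y,Z)=\langle\xi^G_X,R(Y,Z)\rangle$, which is already skew in $(Y,Z)$, horizontal geodesy ($\pi_*\nabla\sigma_*=0$) reads $F(X,Y,Z)+F(Y,X,Z)=0$, i.e.\ $F$ is skew in $(X,Y)$ too and hence totally skew, a three-form; and if $\sigma$ is flat then $R(\cdot,\cdot)_{\lie{m}_\sigma}=-\sigma^*\Phi=0$, so $F\equiv0$ and $\sigma$ is automatically horizontally geodesic.

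The main obstacle is the bookkeeping in (b): invoking the O'Neill submersion formulas correctly in the ``along $\sigma$'' setting (choosing basic extensions of $H_X,H_Y,H_Z$ and vertical extensions of $V_X,V_Y$) and tracking all signs and the factor $\tfrac12$ when passing from ${\sf v}[H_X,H_Z]$ through $\Phi$ to $R(X,Z)_{\lie{m}_\sigma}$. In (a) the corresponding delicate point is the identification of $\sigma^*\nabla^c$ with the $\nabla^G$-induced connection on $\lie{m}_\sigma$, and keeping the $\nabla$ versus $\nabla^G$ conversions consistent so that the quadratic torsion terms genuinely cancel.
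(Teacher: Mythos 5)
Your argument is correct, and it splits into two halves of different character relative to the paper. For part (a) you follow essentially the same route as the paper: apply Lemma \ref{wood:lemma}(ii) to $A=\sigma_*X$, $B=\sigma_*Y$, substitute $\phi\,\sigma_*=-\xi^G$, and cancel the quadratic torsion terms against $\sigma^*\Phi(X,Y)$ via Lemma \ref{curvphi1}. The only divergence is cosmetic: the paper converts $\nabla^c_{\sigma_*X}\phi\sigma_*Y$ to the Levi-Civita derivative using Lemma \ref{wood:lemma}(i), whereas you identify $\sigma^*\nabla^c$ with the connection induced by the minimal connection $\nabla^G$ on $\lie{m}_{\sigma}$ (which is legitimate, since the $\lie{g}$-part of $\omega$ restricted to $\mathcal G(M)$ is exactly $\omega^G$) and then pass to $\nabla$ via $\nabla^G=\nabla+\xi^G$; both bookkeepings land on $-\tfrac12\bigl((\nabla_X\xi^G)_Y+(\nabla_Y\xi^G)_X\bigr)$. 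For part (b) the paper simply quotes Wood's formula $2\langle\pi_*(\nabla_X\sigma_*)Y,Z\rangle=\langle\phi\sigma_*X,\Phi(\sigma_*Y,\sigma_*Z)\rangle+\langle\phi\sigma_*Y,\Phi(\sigma_*X,\sigma_*Z)\rangle$ from \cite[Theorem 3.4(ii)]{Wood2} and then substitutes $\phi\sigma_*=-\xi^G$ and $\Phi(\sigma_*Y,\sigma_*Z)=-R(Y,Z)_{\lie{m}_{\sigma}}$. You instead rederive Wood's formula from scratch: decomposing $\sigma_*=H+V$, using the O'Neill tensor identities for a Riemannian submersion with totally geodesic fibres to get ${\sf h}(\nabla_X\sigma_*)Y=A_{H_X}V_Y+A_{H_Y}V_X$, and then tracing $A_{H_X}H_Z=\tfrac12{\sf v}[H_X,H_Z]$ back through the structure equation to $-\Phi$ and hence to $R(\cdot,\cdot)_{\lie{m}_{\sigma}}$. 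The signs and the factor $\tfrac12$ come out right, and the orthogonality $\langle\xi^G_Y,R(X,Z)_{\lie{g}_{\sigma}}\rangle=0$ is correctly invoked to drop the subscript. What your version buys is self-containedness — the reader does not need to chase the reference to \cite{Wood2} — at the cost of importing the O'Neill machinery; what the paper's version buys is brevity. The deductions of the three corollaries (vertically geodesic $\Rightarrow$ harmonic by tracing, the skew-symmetry characterisation of horizontal geodesy, and flat $\Rightarrow$ horizontally geodesic) are handled correctly in both.
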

\begin{proof}
For (a). Using Lemma \ref{wood:lemma} , we have
\begin{eqnarray*}
\phi \left(\nabla_{X} \sigma_*\right) Y & = &  \nabla^c_{\sigma_*
X} \phi \sigma_*  Y  + \frac12\left\{[\phi\sigma_{*}X,
\phi\sigma_{*}Y]_{\lie{m}} - \Phi( \sigma_* X , \sigma_*
Y)\right\} - \phi \sigma_* (\nabla_X Y)
 \\
 & = & \nabla_{\sigma_*X} \phi \, \sigma_* Y -  [ \phi \sigma_*X
, \phi \sigma_* Y ]+ \frac12 [ \phi \sigma_*X , \phi \sigma_* Y
]_{\lie{m}} \\
&& - \phi \sigma_* (\nabla_X Y) - \frac12 \Phi( \sigma_* X ,
\sigma_* Y).
\end{eqnarray*}
Now, taking  $\phi \sigma_* = - \xi^G$ into account and using
Lemma \ref{curvphi1}, the required identity in (a) follows.

For (b). In \cite[Theorem 3.4 (ii)]{Wood2}, it is proved that
\begin{eqnarray*}
2 \langle \pi_*  \left(\nabla_{X} \sigma^*\right) Y ,  Z \rangle &
= & \langle \phi \sigma_* X , \Phi( \sigma_* Y , \sigma_* Z)
\rangle +  \langle \phi \sigma_* Y , \Phi( \sigma_* X , \sigma_*
Z) \rangle.
\end{eqnarray*}
Since $\phi \sigma_* = - \xi^G$ and $\Phi( \sigma_* Y , \sigma_* Z
\rangle = -R(Y,Z)_{\lie{m}_{\sigma}}$, (b) follows.
\end{proof}

Next, we compute the respective vertical and horizontal components
of the {\it tension field} $\tau(\sigma)= \left( \nabla^q_{e_i}
\sigma_*\right)(e_i)$ used in variational problems \cite{Ur}.
Given a $G$-structure $\sigma$ on a closed Riemannian manifold
$(M,\langle\cdot ,\cdot \rangle),$ the map $(M, \langle \cdot,
\cdot \rangle) \mapsto (\mathcal{SO}(M)/G,
\langle\cdot,\cdot\rangle_{\mathcal{SO}(M)/G})$ is harmonic, i.e.,
$\sigma$ is a critical point for the energy functional on
$\mathcal{C}^{\infty}(M,\mathcal{SO}(M)/G),$ if and only if
$\tau(\sigma)$ vanishes. Because variations vector fields of
smooth variations of $\sigma$ through sections belong to
$\Gamma^{\infty}(\sigma^{*}\mathcal{V}),$ it follows that harmonic
sections are characterised by the vanishing of the vertical
component of $\tau(\sigma)$. By Theorem \ref{verthor}(a), it
follows  $ \phi \tau(\sigma) = - (\nabla_{e_i} \xi^G)_{e_i} = d^*
\xi^G$ which coincides with the above exposed relative to harmonic
$G$-structures. Since, by Theorem \ref{verthor}(b), the horizontal
component of $\tau(\sigma)$ is determined by the horizontal lift
of the vector field metrically equivalent to the one-form $\langle
\xi^G_{e_i} , R{(e_i,X)}\rangle$, then next result follows.
\begin{theorem} \label{harmmap1} A $G$-structure $\sigma$ on a closed and oriented Riemannian manifold
$(M,\langle \cdot , \cdot \rangle)$ is a harmonic map if and only
if $\sigma$ is a harmonic $G$-structure such that $\langle
\xi^G_{e_i} , R(e_i,X)\rangle=0$. Therefore, if $\sigma$ is flat,
then $\sigma$ is a harmonic map if and only if $\sigma$ is a
harmonic $G$-structure.
\end{theorem}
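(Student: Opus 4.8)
The plan is to translate harmonicity as a map into the vanishing of the tension field and then to read off that vanishing componentwise with respect to the orthogonal splitting $\mbox{T}\,\mathcal{SO}(M)/G = \mathcal{V}\oplus\mathcal{H}$. By the Eells--Sampson criterion, $\sigma\colon (M,\langle\cdot,\cdot\rangle)\to(\mathcal{SO}(M)/G,\langle\cdot,\cdot\rangle_{\mathcal{SO}(M)/G})$ is a harmonic map precisely when its tension field $\tau(\sigma) = (\nabla^q_{e_i}\sigma_*)(e_i)$ vanishes, where $\{e_1,\dots,e_n\}$ is a local orthonormal frame on $M$. Since $\tau(\sigma)$ is a section of $\sigma^*\mbox{T}\,\mathcal{SO}(M)/G$, it is zero if and only if its vertical part ${\sf v}\,\tau(\sigma)$ and its horizontal part ${\sf h}\,\tau(\sigma)$ both vanish, so I would compute these two pieces separately using Theorem \ref{verthor}.

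For the vertical part I would apply the bundle isometry $\phi$ and Theorem \ref{verthor}(a) summand by summand, obtaining $\phi\,\tau(\sigma) = \phi(\nabla_{e_i}\sigma_*)(e_i) = -(\nabla_{e_i}\xi^G)_{e_i} = d^*\xi^G$ (using also $\phi\sigma_* = -\xi^G$ from Theorem \ref{siginttor}). Hence ${\sf v}\,\tau(\sigma) = 0$ is equivalent to $d^*\xi^G = 0$, i.e.\ to $\sigma$ being a harmonic $G$-structure in the sense introduced after Theorem \ref{carharm}. For the horizontal part I would put $X = Y = e_i$ in the identity of Theorem \ref{verthor}(b) and sum over $i$, which yields $\langle\pi_*\tau(\sigma),Z\rangle = \langle\xi^G_{e_i},R(e_i,Z)\rangle$ for every $Z\in\mathfrak{X}(M)$; since $\pi$ is a Riemannian submersion, $\pi_*$ is injective on $\mathcal{H}$, so ${\sf h}\,\tau(\sigma) = 0$ is equivalent to $\langle\xi^G_{e_i},R(e_i,X)\rangle = 0$ for all $X$. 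Combining the two descriptions gives the asserted equivalence: $\sigma$ is a harmonic map if and only if $d^*\xi^G = 0$ and $\langle\xi^G_{e_i},R(e_i,X)\rangle = 0$.

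For the last statement I would invoke Lemma \ref{curvphi1}: a flat $G$-structure satisfies $\sigma^*\Phi = 0$, which by that lemma is the same as $R(X,Y)_{\lie{m}_{\sigma}} = 0$ for all $X,Y$. Since each $\xi^G_{e_i}$ lies in $\lie{m}_{\sigma}$ and $\lie{m}_{\sigma}$ is orthogonal to $\lie{g}_{\sigma}$, we then get $\langle\xi^G_{e_i},R(e_i,X)\rangle = \langle\xi^G_{e_i},R(e_i,X)_{\lie{m}_{\sigma}}\rangle = 0$ automatically, so the horizontal obstruction is vacuous and for flat $\sigma$ the notions of harmonic map and harmonic $G$-structure coincide. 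The substantive analytic work having already been carried out in Theorem \ref{verthor}, the only point needing care here is the elementary bookkeeping in the vertical computation --- commuting $\phi$ with the sum and identifying $\phi\,\tau(\sigma)$ with $d^*\xi^G$ via $\phi\sigma_* = -\xi^G$ --- together with the observation that harmonicity \emph{as a map} requires the full $\tau(\sigma)$ to vanish, not merely its vertical part.
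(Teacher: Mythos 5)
Your proposal is correct and follows essentially the same route as the paper: the paper also computes the vertical and horizontal components of the tension field $\tau(\sigma)=(\nabla^q_{e_i}\sigma_*)(e_i)$ via Theorem \ref{verthor}(a) and (b), identifies $\phi\,\tau(\sigma)=d^*\xi^G$ and the horizontal part with the one-form $\langle\xi^G_{e_i},R(e_i,X)\rangle$, and deduces the flat case from $R(X,Y)_{\lie{m}_{\sigma}}=0$. The bookkeeping details you flag (injectivity of $\pi_*$ on $\mathcal{H}$, orthogonality of $\lie{g}_{\sigma}$ and $\lie{m}_{\sigma}$) are exactly the ones implicit in the paper's argument.
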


Such a $G$-structure $\sigma$ is said to determine a
\emph{harmonic map}, even when $M$ is possibly non-compact or
non-orientable and if ${\sf v} \left( \nabla_\cdot {\sf v}
\sigma_*\right) \cdot =0$, the $G$-structure $\sigma$ is called
{\it super-flat}.
\begin{theorem} \label{superflat} We have
\[
\phi (\nabla_X {\sf v} \sigma_*)(Y) = - \frac12 \left( (\nabla_{X}
\xi^G)_Y + (\nabla_{Y} \xi^G)_X + R(X,Y)_{\lie{m}_{\sigma}}
\right).
\]
Therefore, $\sigma$ is super-flat if and only if $\sigma$ is flat
and totally geodesic. In particular, a parallel $G$-structure is
super-flat.
\end{theorem}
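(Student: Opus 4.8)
The plan is to first establish the displayed identity for $\phi(\nabla_X {\sf v}\,\sigma_{\ast})(Y)$, and then to read off the geometric characterisation of super-flat $G$-structures from it by a symmetry argument.

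For the identity, I would decompose $\sigma_{\ast} = {\sf v}\,\sigma_{\ast} + {\sf h}\,\sigma_{\ast}$, so that $(\nabla_X {\sf v}\,\sigma_{\ast})(Y) = (\nabla_X \sigma_{\ast})(Y) - (\nabla_X {\sf h}\,\sigma_{\ast})(Y)$, and apply $\phi$. The first summand is precisely what Theorem \ref{verthor}(a) evaluates, namely $-\tfrac12\bigl((\nabla_X\xi^G)_Y + (\nabla_Y\xi^G)_X\bigr)$. For the second, since $\phi$ annihilates horizontal vectors one has $\phi(\nabla_X {\sf h}\,\sigma_{\ast})(Y) = \phi\bigl(\nabla^q_X({\sf h}\,\sigma_{\ast}Y)\bigr)$; applying Lemma \ref{wood:lemma}(ii) to a horizontal extension $H$ of ${\sf h}\,\sigma_{\ast}Y$ — for which $\phi H = 0$, hence both $\nabla^c_{\sigma_\ast X}\phi H$ and $[\phi\,\sigma_\ast X,\phi H]$ vanish — leaves only $-\tfrac12\Phi(\sigma_{\ast}X, {\sf h}\,\sigma_{\ast}Y)$. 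Since $\Phi$ is a $2$-form vanishing as soon as one entry is vertical, this equals $-\tfrac12\Phi(\sigma_{\ast}X,\sigma_{\ast}Y) = -\tfrac12\,\sigma^{\ast}\Phi(X,Y) = \tfrac12\,R(X,Y)_{\lie{m}_\sigma}$ by Lemma \ref{curvphi1}. Adding the two contributions gives the asserted formula. (Alternatively, one may compute $\phi(\nabla_X {\sf v}\,\sigma_{\ast})(Y)$ directly from \eqref{nn} using $\phi\,\sigma_{\ast} = -\xi^G$, obtaining $-(\nabla_X\xi^G)_Y - \tfrac12[\xi^G_X,\xi^G_Y]_{\lie{m}_\sigma} - [\xi^G_X,\xi^G_Y]_{\lie{g}_\sigma}$, and then substitute the expression for $R(X,Y)_{\lie{m}_\sigma}$ furnished by Lemma \ref{curvphi1}; the two routes differ only in elementary bracket bookkeeping.)

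Next I would extract the characterisation. Because $\phi$ restricts to an isomorphism of $\mathcal{V}$ onto $\lie{m}_{\mathcal{SO}(M)}$ and kills $\mathcal{H}$, one has $\phi(W) = \phi({\sf v}W)$, and $\phi({\sf v}W)=0$ if and only if ${\sf v}W = 0$; hence super-flatness, i.e. ${\sf v}(\nabla_X {\sf v}\,\sigma_{\ast})(Y) = 0$ for all $X,Y$, is equivalent to the vanishing of the right-hand side of the formula, that is, to $(\nabla_X\xi^G)_Y + (\nabla_Y\xi^G)_X + R(X,Y)_{\lie{m}_\sigma} = 0$ for all $X,Y$. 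The first two terms are symmetric and the third is skew under the interchange $X\leftrightarrow Y$, so this single identity is equivalent to the pair of conditions $(\nabla_X\xi^G)_Y + (\nabla_Y\xi^G)_X = 0$ and $R(X,Y)_{\lie{m}_\sigma}=0$, which by Theorem \ref{verthor}(a) and Lemma \ref{curvphi1} say exactly that $\sigma$ is vertically geodesic and flat. Since a flat $G$-structure is automatically horizontally geodesic by Theorem \ref{verthor}(b), the conjunction ``vertically geodesic and flat'' coincides with ``totally geodesic and flat'', and this yields the stated equivalence. Finally, if $\sigma$ is parallel then $\xi^G=0$, whence also $R(X,Y)_{\lie{m}_\sigma}=0$ by Lemma \ref{curvphi1}, so the right-hand side vanishes identically and $\sigma$ is super-flat.

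I do not expect a serious obstacle. The only point requiring care is the use of Lemma \ref{wood:lemma}(ii) (equivalently \eqref{nn}) along the section $\sigma$: one must replace the pullback covariant derivatives occurring in $\nabla\sigma_{\ast}$ by derivatives of suitable vertical or horizontal extensions and keep the $\lie{g}$- and $\lie{m}$-components of the various brackets straight, which is handled exactly as in the proofs of Theorems \ref{firstvar} and \ref{verthor}.
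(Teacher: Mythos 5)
Your proposal is correct and follows essentially the same route as the paper: decompose $\sigma_*$ into vertical and horizontal parts, evaluate $\phi(\nabla_X{\sf h}\,\sigma_*)(Y)$ via Lemma \ref{wood:lemma}(ii) (with $\phi$ of a horizontal field vanishing) to produce the $\tfrac12\Phi(\sigma_*X,\sigma_*Y)=-\tfrac12 R(X,Y)_{\lie{m}_\sigma}$ term, combine with Theorem \ref{verthor}(a) and Lemma \ref{curvphi1}, and then split the resulting identity into its symmetric part (vertically geodesic) and skew part (flat). Your explicit remarks that the term ${\sf h}\,\sigma_*(\nabla_XY)$ is killed by $\phi$ and that flatness already forces horizontal geodesy are details the paper leaves implicit, but the argument is the same.
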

\begin{proof}
Using Lemma \ref{wood:lemma}, we have
\[
\phi \left(\nabla_{X} {\sf v} \sigma_*\right) Y  =
\phi(\nabla_{X}\sigma_*)  Y - \phi \nabla^{q}_{X}{\sf
h}\sigma_{*}Y = \phi(\nabla_{X}\sigma_*)  Y   + \frac12 \Phi(
\sigma_* X , \sigma_* Y).
\]
Then, the identity follows using Lemma \ref{curvphi1} and Theorem
\ref{verthor}. Finally, note that if $\sigma$ is super-flat, then
the vanishing of the symmetric part for $X$ and $Y$ of $\phi
(\nabla_X {\sf v} \sigma_*)Y$ implies that $\sigma$ is vertically
geodesic. Meanwhile, the vanishing of the skew-symmetric part for
$X$ and $Y$ implies that $\sigma$ is flat.
\end{proof}

Relevant types of diverse $G$-structures are characterised by
saying that its intrinsic torsion $\xi^G$ is metrically equivalent
to a skew-symmetric three-form, that is, $\xi^G_X Y = - \xi^G_Y
X$. Now we will show some facts satisfied by  such $G$-structures.
\begin{proposition} \label{pro:skew}  For   a $G$-structure $\sigma$  such
that $\xi^G_X Y = - \xi^G_{Y} X$, we have:
\begin{enumerate}
 \item[{\rm (i)}] 
  If $[\xi^G_X  , \xi^G_Y] \in \lie{g}_{\sigma}$, for all $X,Y \in
  \mathfrak X(M)$,
  then
    $\langle R_{(X,Y)\lie{m}_{\sigma}} X , Y \rangle  =  2\langle
\xi_{X} Y , \xi_X Y \rangle$. Therefore, $\sigma$ is parallel  if
and only if  $\sigma$ is flat if and only if $\sigma$ is
super-flat.
 \item[{\rm (ii)}] If $\sigma$ is a harmonic $G$-structure, then
$\sigma$ is also a harmonic map.
\end{enumerate}
\end{proposition}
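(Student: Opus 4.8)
The plan is to deduce both parts from Lemma~\ref{curvphi1} and Theorem~\ref{harmmap1}, using throughout that the hypothesis $\xi^G_X Y=-\xi^G_Y X$, together with the skew-symmetry of each $\xi^G_X$, makes $T(X,Y,Z):=\langle\xi^G_X Y,Z\rangle$ a genuine $3$-form; in particular $\xi^G_X X=0$.

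For (i) I would begin from Lemma~\ref{curvphi1}. Under the extra hypothesis $[\xi^G_X,\xi^G_Y]\in\lie{g}_{\sigma}$ the $\lie{m}_{\sigma}$-component of $[\xi^G_X,\xi^G_Y]$ vanishes, so that lemma reads $R(X,Y)_{\lie{m}_{\sigma}}=(\nabla_X \xi^G)_Y-(\nabla_Y \xi^G)_X+2[\xi^G_X,\xi^G_Y]$. Applying this endomorphism to $X$ and taking the inner product with $Y$, the two covariant-derivative terms drop out: since $\nabla$ is metric, $(X,Y,Z)\mapsto\langle(\nabla_W \xi^G)_X Y,Z\rangle=(\nabla_W T)(X,Y,Z)$ is again a totally antisymmetric $3$-tensor, so $\langle(\nabla_X \xi^G)_Y X,Y\rangle=(\nabla_X T)(Y,X,Y)$ and $\langle(\nabla_Y \xi^G)_X X,Y\rangle=(\nabla_Y T)(X,X,Y)$ each have a repeated argument and vanish. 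A short computation with $\xi^G_X X=0$ and $\xi^G_Y X=-\xi^G_X Y$ gives $\langle[\xi^G_X,\xi^G_Y]X,Y\rangle=-\langle\xi^G_Y X,\xi^G_X Y\rangle=\langle\xi^G_X Y,\xi^G_X Y\rangle$, whence $\langle R(X,Y)_{\lie{m}_{\sigma}}X,Y\rangle=2\langle\xi_X Y,\xi_X Y\rangle$. The three-way equivalence then follows: if $\sigma$ is flat, then $R(X,Y)_{\lie{m}_{\sigma}}=0$ forces $\xi^G_X Y=0$ for all $X,Y$, so $\sigma$ is parallel; a parallel $G$-structure is super-flat and a super-flat $G$-structure is flat by Theorem~\ref{superflat}, which closes the cycle parallel $\Rightarrow$ super-flat $\Rightarrow$ flat $\Rightarrow$ parallel.

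For (ii), by Theorem~\ref{harmmap1} a harmonic $G$-structure is a harmonic map precisely when the one-form $X\mapsto\langle\xi^G_{e_i},R(e_i,X)\rangle=\sum_{i,j,k}T_{ijk}\langle R(e_i,X)e_j,e_k\rangle$ vanishes, where $T_{ijk}=\langle\xi^G_{e_i}e_j,e_k\rangle$ is totally antisymmetric. Since $T$ is invariant under cyclic permutations of $(i,j,k)$, I would replace the curvature factor by its average over those three cyclic permutations; by the pair symmetry of $R$ this cyclic sum equals $\langle R(e_j,e_k)e_i+R(e_k,e_i)e_j+R(e_i,e_j)e_k,X\rangle$, which is zero by the first Bianchi identity. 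Thus the one-form vanishes and, $\sigma$ being a harmonic $G$-structure by hypothesis, Theorem~\ref{harmmap1} gives that $\sigma$ is a harmonic map. Both arguments are essentially computational; the one point that needs care is the bookkeeping of the $\lie{g}_{\sigma}/\lie{m}_{\sigma}$ splitting in Lemma~\ref{curvphi1} — the $\lie{m}_{\sigma}$-projection there entangles the terms, but it becomes harmless once one contracts against $\langle\,\cdot\, X,Y\rangle$ and invokes the $3$-form property of $T$.
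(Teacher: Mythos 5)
Your argument is correct and follows essentially the same route as the paper: part (i) is obtained by contracting the expression for $R(X,Y)_{\lie{m}_{\sigma}}$ from Lemma~\ref{curvphi1} against $\langle\,\cdot\,X,Y\rangle$ and using that $\langle\xi^G_XY,Z\rangle$ and its covariant derivative are totally skew, and part (ii) is exactly the paper's first-Bianchi computation applied to the criterion of Theorem~\ref{harmmap1}. You in fact supply more detail than the paper does, in particular the explicit cycle parallel $\Rightarrow$ super-flat $\Rightarrow$ flat $\Rightarrow$ parallel, which the paper leaves implicit.
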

\begin{proof}
For (i). Because the condition $\xi^{G}_X Y= - \xi^{G}_Y X$
implies that $(\nabla_X \xi^{G})_Y Z = - (\nabla_X \xi^{G})_Z Y$
and $(\nabla_X^{G} \xi)_Y Z = - (\nabla^G_X \xi)_Z Y,$ we will get
the required identity in (i) by using the expression for
$R(X,Y)_{\lie{m}_{\sigma}}$ contained in Lemma \ref{curvphi1}.

For (ii). Applying the first Bianchi's identity, we have
$$
 \langle \xi^G_{e_i} ,
R{(e_i,X)} \rangle = \frac13  \langle  \xi^G_{e_i} e_j , e_k
\rangle \left( \langle R{(e_j,e_k)} e_i , X \rangle  + \langle
R{(e_k,e_i)} e_j , X \rangle + \langle R{(e_i,e_j)} e_k , X
\rangle \right)=0.
$$
\end{proof}

In next Section, we will  study  harmonicity of almost Hermitian
metric structures. Such structures are examples of $G$-structures
defined by means of one or several $(r,s)$-tensor fields $\Psi$
which are stabilised under the action of $G$, i.e.,  $g \cdot
\Psi= \Psi$, for all $g \in G$. Moreover, it will be possible
characterise the harmonicity of such $G$-structures by conditions
  given in terms of those tensors $\Psi$.
The \emph{connection Laplacian} (or {\it rough Laplacian})
\cite{LM} $\nabla^* \nabla \Psi$ will play a relevant r\^{o}le  in
such conditions. We recall that
$$
\nabla^* \nabla \Psi = -  \left( \nabla^2 \Psi \right)_{e_i,e_i},
$$
where $\{ e_1, \dots , e_n \}$ is an orthonormal  frame field and
$(\nabla^2\Psi)_{X,Y} = \nabla_X (\nabla_Y \Psi) -
\nabla_{\nabla_XY}\Psi$. Next Lemma provides an expression for
$\nabla^* \nabla \Psi$ in terms of $\nabla^G$ and $\xi^G$ which
will be useful in the sequel.
\begin{lemma} \label{lapstaten}
Let $(M,\langle \cdot , \cdot \rangle)$ be an oriented Riemannian
$n$-manifold  equipped with a $G$-structure, where the Lie group
$G$ is closed, connected and $G \subseteq \Lie{SO}(n)$. If
 $\Psi$ is a $(r,s)$-tensor field on $M$ which is stabilised under the action of $G$, then
$$
\nabla^* \nabla \Psi =  \left( \nabla^{\Lie{G}}_{e_i}
\xi^{\Lie{G}}\right)_{e_i} \Psi +
\xi^{\Lie{G}}_{\xi^{\Lie{G}}_{e_i}e_i} \Psi  - \xi^{\Lie{G}}_{e_i}
(\xi^{\Lie{G}}_{e_i} \Psi).
$$
Moreover, if the $G$-structure is harmonic, then $ \nabla^* \nabla
\Psi = -\xi^{\Lie{G}}_{e_i} (\xi^{\Lie{G}}_{e_i} \Psi). $
\end{lemma}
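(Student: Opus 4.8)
The plan is to expand $\nabla^*\nabla\Psi = -(\nabla^2\Psi)_{e_i,e_i}$ by substituting $\nabla = \nabla^G - \xi^G$ and carefully tracking all terms. First I would write, at a point $m$ with a local orthonormal frame $\{e_i\}$ that is $\nabla^G$-parallel at $m$ (so that $\nabla^G_{e_i}e_j = 0$ at $m$; such a frame exists since $\nabla^G$ is metric), and compute $(\nabla^2\Psi)_{e_i,e_i} = \nabla_{e_i}(\nabla_{e_i}\Psi) - \nabla_{\nabla_{e_i}e_i}\Psi$. Using $\nabla_X\Psi = \nabla^G_X\Psi - \xi^G_X\Psi$ (where $\xi^G_X$ acts on $\Psi$ as a derivation of the tensor algebra, which makes sense since $\xi^G_X\in\lie{so}(M)$ and $\Psi$ is a tensor) gives
\[
\nabla_{e_i}\nabla_{e_i}\Psi = \nabla^G_{e_i}(\nabla^G_{e_i}\Psi) - \nabla^G_{e_i}(\xi^G_{e_i}\Psi) - \xi^G_{e_i}(\nabla^G_{e_i}\Psi) + \xi^G_{e_i}(\xi^G_{e_i}\Psi),
\]
while the correction term contributes $-\nabla_{\nabla_{e_i}e_i}\Psi = -\nabla_{-\xi^G_{e_i}e_i}\Psi = \nabla^G_{\xi^G_{e_i}e_i}\Psi - \xi^G_{\xi^G_{e_i}e_i}\Psi$, since $\nabla^G_{e_i}e_i = 0$ at $m$ forces $\nabla_{e_i}e_i = -\xi^G_{e_i}e_i$ there.

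Next I would use that $\Psi$ is stabilised by $G$, hence by $\lie{g}$, so $\nabla^G\Psi = 0$: this is the key point, and it kills the terms $\nabla^G_{e_i}(\nabla^G_{e_i}\Psi)$, $\xi^G_{e_i}(\nabla^G_{e_i}\Psi)$, and $\nabla^G_{\xi^G_{e_i}e_i}\Psi$. The term $\nabla^G_{e_i}(\xi^G_{e_i}\Psi)$ I would rewrite via the Leibniz rule for $\nabla^G$ acting on the contraction of $\xi^G_{e_i}$ (a section of $\lie{so}(M)\subset\End(\mathrm{T}M)$) with $\Psi$: at $m$ this equals $((\nabla^G_{e_i}\xi^G)_{e_i})\Psi + \xi^G_{e_i}(\nabla^G_{e_i}\Psi) = ((\nabla^G_{e_i}\xi^G)_{e_i})\Psi$, again using $\nabla^G\Psi=0$ and $\nabla^G_{e_i}e_i = 0$ at $m$. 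Assembling the surviving pieces:
\[
\nabla^*\nabla\Psi = -\nabla_{e_i}\nabla_{e_i}\Psi + \nabla_{\nabla_{e_i}e_i}\Psi = (\nabla^G_{e_i}\xi^G)_{e_i}\Psi - \xi^G_{e_i}(\xi^G_{e_i}\Psi) + \xi^G_{\xi^G_{e_i}e_i}\Psi,
\]
which is exactly the claimed formula. For the "moreover" part, I would invoke Theorem~\ref{carharm}(iii): harmonicity of the $G$-structure means $(\nabla^G_{e_i}\xi^G)_{e_i} = -\xi^G_{\xi^G_{e_i}e_i}$, so the first and third terms cancel, leaving $\nabla^*\nabla\Psi = -\xi^G_{e_i}(\xi^G_{e_i}\Psi)$.

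The main obstacle is bookkeeping rather than conceptual: one must be careful that $\xi^G_X$ really does act on $\Psi$ as a derivation of tensor type $(r,s)$ — this uses that $\xi^G_X\in\lie{so}(M)$ acts on $\mathrm{T}M$ and $\mathrm{T}^*M$ compatibly with the pairing — and that the identity $\nabla^G_X(\xi^G_Y\Psi) = ((\nabla^G_X\xi^G)_Y)\Psi + \xi^G_{\nabla^G_XY}\Psi + \xi^G_Y(\nabla^G_X\Psi)$ is just the Leibniz rule applied to a composite contraction. I would also double-check that the choice of a $\nabla^G$-geodesic (normal) frame at $m$ is legitimate and that the final expression is frame-independent, which it manifestly is since every term is written invariantly after summing over $i$. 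Everything else is substitution and cancellation driven by the single structural fact $\nabla^G\Psi = 0$.
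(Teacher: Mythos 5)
Your proof is correct and follows essentially the same route as the paper: both arguments hinge on the single fact $\nabla^G\Psi=0$, substitute $\nabla=\nabla^G-\xi^G$, and apply the Leibniz rule to $\nabla^G_X(\xi^G_Y\Psi)$ before contracting. The only cosmetic difference is your use of a $\nabla^G$-normal orthonormal frame at $m$ (legitimate, since a frame parallel at a point exists for any affine connection and metricity of $\nabla^G$ keeps it orthonormal), whereas the paper keeps general $X,Y$ and absorbs the resulting $\xi^G_{\nabla^G_XY}\Psi-\xi^G_{\nabla_XY}\Psi=\xi^G_{\xi^G_XY}\Psi$ correction directly.
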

\begin{proof}
Since $\nabla^G$ is a $G$-connection and $\Psi$ is stabilised
under the action of $G$, then $\nabla^G \Psi = 0$. Taking
$\nabla^G = \nabla + \xi^G$ into account, this implies that
$\nabla \Psi = - \xi^G \Psi$. Therefore,
$$
(\nabla^2\Psi)_{X,Y} = - \nabla_X (\xi^G_Y \Psi) +
\xi^G_{\nabla_XY}\Psi =  - \nabla^G_X (\xi^G_Y \Psi) + \xi^G_X
(\xi^G_Y \Psi)+ \xi^G_{\nabla_XY}\Psi.
$$
Because the presence of the metric $\langle \cdot , \cdot
\rangle$, any $(r,s)$-tensor field on $M$ is metrically equivalent
to a $(0,r+s)$-tensor field. Therefore,  we have only to make the
proof for covariant tensors fields. Thus, we can assume that
$\Psi$ is a $(0,s)$-tensor field on $M$. By a straightforward
computation we get {\footnotesize
\begin{gather*}
\nabla^G_X (\xi^G_Y \Psi)(Z_1, \dots, Z_s) =  - \sum_{i=1}^s X
\left(\Psi(Z_1,\dots, \xi^G_Y  Z_i, \dots, Z_s)\right) +
\sum_{i=1}^s \Psi(Z_1,\dots, \xi^G_Y \nabla^G_X  Z_i, \dots,
Z_s)\\[-0mm]
 + \sum^s_{i,j=1\atop i\neq j}\Psi(Z_1,\dots, \xi^G_Y
Z_i, \dots , \nabla^G_X Z_j, \dots, Z_s).
\end{gather*}}
Now using $\nabla^G \Psi = 0$, we have
\begin{eqnarray*}
 \sum_{i=1}^s X \left(\Psi(Z_1,\dots,
\xi^G_Y  Z_i, \dots, Z_s)\right) & = & \sum_{i=1}^s
\Psi(Z_1,\dots,
\nabla^G_X \xi^G_Y  Z_i, \dots, Z_s) \\
&& +\sum^s_{i,j=1\atop i\neq j}\Psi(Z_1,\dots, \xi^G_Y Z_i, \dots
, \nabla^G_X Z_j, \dots, Z_s).
\end{eqnarray*}
 Taking this identity into account in the expression for
$\nabla^G_X (\xi^G_Y \Psi)$, we will obtain $\nabla^G_X (\xi^G_Y
\Psi) = (\nabla^G_X \xi^G)_Y \Psi + \xi^{G}_{\nabla^G_X Y} \Psi$.
Therefore, for the second covariant derivative we get
$$
(\nabla^2\Psi)_{X,Y} =  - (\nabla^G_X \xi^G)_Y \Psi -
\xi^{G}_{\nabla^G_X Y} \Psi + \xi^G_{\nabla_XY}\Psi +  \xi^G_X
(\xi^G_Y \Psi),
$$
which proves the required expression for $\nabla^* \nabla \Psi.$
\end{proof}

\section{Harmonic almost Hermitian structures}{\indent}
\label{sect:almherm}
 An almost Hermitian manifold is a
$2n$-dimensional Riemannian manifold $(M,\langle \cdot, \cdot
\rangle)$ equipped with an  almost complex structure $J$
compatible with the metric, that is, $ J^{2} = -{\rm Id}$ and
$\langle JX,JY \rangle = \langle X,Y \rangle$, for all vector
fields $X$, $Y$. Associated to the almost Hermitian structure, the
two-form $\omega= \langle \cdot, J \cdot\rangle$, called the {\em
K\"ahler form}, is usually considered. Using $\omega$, $M$ can be
oriented by fixing a constant multiple of $\omega^n = \omega
\wedge \dots^{(n}\wedge \omega$ as volume form. Likewise, the
presence of an almost Hermitian structure is equivalent to say
that $M$ is equipped with a $\Lie{U}(n)$-structure. It is well
known that $\Lie{U}(n)$ is a closed and connected subgroup of
$\SO(2n)$ and $\SO(2n)/ \Lie{U}(n)$ is reductive; in fact, it is a
Riemannian symmetric space . Moreover, we have the decomposition
into $\Lie{U}(n)$-modules $\lie{so}(M) = \lie{u}(n)(M) \oplus
\lie{m}(M)$. We will omit the subindex $\sigma$ used in  previous
Sections. Also, as in references, we shall simply denote
$\lie{u}(n)(M)$ and $\lie{m}(M)$ by $\lie{u}(n)$ and
$\lie{u}(n)^{\perp}.$ The bundle $\lie{u}(n)$ (resp.,
$\lie{u}(n)^{\perp}$) consists of those skew-symmetric
endomorphisms $A$ on tangent vectors  such that $AJ=JA$ (resp.,
$AJ=-JA$). The identification $b_A( \cdot , \cdot ) = \langle A
\cdot , \cdot \rangle$ implies $\Lambda^2 \mbox{T}^* M \cong
\lie{so}(M)$. Therefore, $\Lambda^2 \mbox{T}^* M = \lie{u}(n)
\oplus \lie{u}(n)^{\perp}$, where in this case $\lie{u}(n)$
(resp., $\lie{u}(n)^{\perp}$) consists of those two-forms on $M$
which are Hermitian (resp., anti-Hermitian), i.e.,  $b(J\cdot , J
\cdot) = b(\cdot ,  \cdot)$ (resp., $b(J\cdot , J \cdot) = -
b(\cdot ,  \cdot)$).

The minimal $\Un(n)$-connection is given by $\nabla^{\Lie{U}(n)} =
\nabla + \xi^{\Lie{U}(n)}$, with
\begin{equation} \label{torsion:xi}
  \xi^{\Lie{U}(n)}_X Y = - \tfrac12 J\left( \nabla_X J \right) Y,
\end{equation}
(see~\cite{Falcitelli-FS:aH}). Moreover,  $\xi^{\Lie{U}(n)} \in
\mbox{T}^* M \otimes \un(n)^\perp$ is equivalent to the condition
$$
 \xi^{\Lie{U}(n)} J + J \xi^{\Lie{U}(n)} = 0.
$$
Since $\Un(n)$~stabilises the K\"ahler form~$\omega$, it follows
that $\nabla^{\Lie{U}(n)} \omega = 0$. Taking this into account,
$\xi^{\Lie{U}(n)} \in \mbox{T}^* M \otimes \un(n)^\perp$ implies
$\nabla \omega = - \xi^{\Lie{U}(n)} \omega \in \mbox{T}^* M
\otimes \un(n)^\perp$. Thus,
    one can
identify the $\Un(n)$-components of $\xi^{\Lie{U}(n)}$ with the
$\Un(n)$-components of $\nabla\omega$:
\begin{enumerate}
\item if $n=1$, $ \xi^{\Lie{U}(1)} \in \mbox{T}^* M \otimes
\un(1)^\perp = \{ 0 \}$; \item if $n=2$, $ \xi^{\Lie{U}(2)} \in
\mbox{T}^* M \otimes \un(2)^\perp = \Wc2 \oplus
  \Wc4$;
\item if $n \geqslant 3$, $ \xi^{\Lie{U}(n)} \in \mbox{T}^* M
\otimes \un(n)^\perp =
  \Wc1 \oplus \Wc2 \oplus \Wc3 \oplus \Wc4$.
\end{enumerate}
Here the summands~$\Wc{i}$ are the irreducible $\Un(n)$-modules
given by Gray and Hervella in~\cite{Gray-H:16}. In the following,
we will merely write $\xi = \xi^{\Lie{U}(n)}$ and $\xi_{(i)}$~will
denote the component in~$\Wc{i}$ of the intrinsic torsion~$\xi$.
For one-forms $\theta$, we will stand $J \theta (X) = -
\theta(JX)$, for all $X \in \mathfrak X(M)$. The one-form $J
\mbox{\it d}^* \omega$ is a constant multiple of the Lee one-form
which determines the $\mathcal W_4$-part of the intrinsic torsion
$\xi$ \cite{Gray-H:16}. Moreover, from \eqref{torsion:xi}, we will
have $ 2 \langle \xi_X Y , Z \rangle  = - (\nabla_X
\omega)(Y,JZ)$. Now, using this last identity, it is obtained that
the vector field $\xi_{e_i} e_i$ which take part in the
harmonicity criteria (see Theorem \ref{carharm})  is given by $ 2
\xi_{e_i} e_i = - J (d^* \omega)^{\sharp}$.

\begin{theorem} \label{characharmherm1}
For an almost Hermitian $2n$-manifold $(M,\langle \cdot, \cdot
\rangle ,J)$ with K\"ahler form $\omega$, we have that the
following conditions are equivalent:
 \begin{enumerate}
 \item[{\rm (i)}] The almost Hermitian structure is harmonic.
  \item[{\rm (ii)}] $[J,\nabla^* \nabla J] = 0$, where $[\cdot,\cdot]$ denotes the commutator bracket for endomorphisms.
 \item[{\rm (iii)}] $\nabla^* \nabla \omega$ is a Hermitian two-form.
 \item[{\rm (iv)}] $ \nabla^* \nabla \omega (X,Y)  =  - 4  \omega (\xi_{e_i} X,
\xi_{e_i}Y)$, for all $X,Y \in \mathfrak{X}(M)$.
 \end{enumerate}
\end{theorem}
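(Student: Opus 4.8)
The plan is to compute $\nabla^{*}\nabla\omega$ explicitly in terms of the intrinsic torsion $\xi$ and then to read off all four conditions from the splitting of $\nabla^{*}\nabla\omega$ into its Hermitian and anti-Hermitian parts. Since $\Un(n)$ stabilises $\omega$, Lemma~\ref{lapstaten} applies with $\Psi=\omega$ and gives
\[
\nabla^{*}\nabla\omega
  = \bigl(\nabla^{\Lie{U}(n)}_{e_i}\xi\bigr)_{e_i}\!\cdot\omega
  + \xi_{\xi_{e_i}e_i}\!\cdot\omega
  - \xi_{e_i}\bigl(\xi_{e_i}\!\cdot\omega\bigr),
\]
where a skew endomorphism $A$ acts on a two-form $b$ by $(A\cdot b)(X,Y)=-b(AX,Y)-b(X,AY)$. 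By Lemma~\ref{coderxilem} the first two terms combine to $-\,d^{*}\xi\cdot\omega$, so
\[
\nabla^{*}\nabla\omega = -\,d^{*}\xi\cdot\omega - \xi_{e_i}\bigl(\xi_{e_i}\!\cdot\omega\bigr).
\]

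Next I would analyse the two summands separately. Since $d^{*}\xi$ is a section of $\un(n)^{\perp}$ (Lemma~\ref{coderxilem}) and the infinitesimal action of $\un(n)^{\perp}$ sends $\omega$ into anti-Hermitian two-forms, the first summand is anti-Hermitian; moreover the kernel of $A\mapsto A\cdot\omega$ on $\lie{so}(M)$ is exactly the stabiliser $\un(n)$, so this map restricts to a bundle isomorphism of $\un(n)^{\perp}$ onto the anti-Hermitian part of $\Lambda^{2}\mbox{T}^{*}M$, whence $d^{*}\xi\cdot\omega=0$ if and only if $d^{*}\xi=0$. For the second summand I would record the pointwise identity, valid for any skew endomorphism $A$ with $AJ=-JA$,
\[
\bigl(A\cdot(A\cdot\omega)\bigr)(X,Y)
 = \omega(A^{2}X,Y)+2\,\omega(AX,AY)+\omega(X,A^{2}Y)
 = 4\,\omega(AX,AY),
\]
the last equality obtained by using $AJ=-JA$, the skew-symmetry of $A$ and $\omega=\langle\cdot,J\cdot\rangle$ to rewrite $\omega(AX,AY)=\omega(A^{2}X,Y)=\omega(X,A^{2}Y)$. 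Taking $A=\xi_{e_i}$ (each $\xi_{e_i}\in\un(n)^{\perp}$ by~\eqref{torsion:xi}) and summing over $i$ gives $\xi_{e_i}(\xi_{e_i}\cdot\omega)(X,Y)=4\,\omega(\xi_{e_i}X,\xi_{e_i}Y)$, a Hermitian two-form. Combining,
\[
\nabla^{*}\nabla\omega(X,Y) = -\,(d^{*}\xi\cdot\omega)(X,Y) - 4\,\omega(\xi_{e_i}X,\xi_{e_i}Y),
\]
and the two terms on the right are precisely the anti-Hermitian and Hermitian components of $\nabla^{*}\nabla\omega$.

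With this identity in hand the equivalences are immediate. Condition (iii) says the anti-Hermitian component vanishes, i.e. $d^{*}\xi\cdot\omega=0$, which by the injectivity noted above is the same as $d^{*}\xi=0$, i.e. as (i) by Theorem~\ref{carharm}; it is also the same as saying $\nabla^{*}\nabla\omega$ reduces to its Hermitian term, i.e. as (iv). For (ii)$\Leftrightarrow$(iii) I would use that the Levi-Civita connection is metric, so $\nabla^{*}\nabla$ commutes with the musical isomorphisms and with the identification $\Lambda^{2}\mbox{T}^{*}M\cong\lie{so}(M)$; under it $\nabla^{*}\nabla\omega$ corresponds, up to sign, to the skew endomorphism $\nabla^{*}\nabla J$, and a two-form is Hermitian exactly when the associated skew endomorphism commutes with $J$, so $\nabla^{*}\nabla\omega$ is Hermitian if and only if $[J,\nabla^{*}\nabla J]=0$.

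The only genuinely computational point is the pointwise identity for $A\cdot(A\cdot\omega)$, together with the companion facts that $\un(n)^{\perp}$ acts on $\omega$ into anti-Hermitian forms and that $\omega(\xi_{e_i}\cdot,\xi_{e_i}\cdot)$ is Hermitian; each of these is a short manipulation with $AJ=-JA$, the skew-symmetry of $A$ and $\omega=\langle\cdot,J\cdot\rangle$, so I expect the main effort to be sign-bookkeeping rather than any conceptual difficulty.
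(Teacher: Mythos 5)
Your proof is correct and follows essentially the same route as the paper: apply Lemma~\ref{lapstaten} to $\Psi=\omega$, observe that $A\mapsto A\cdot\omega$ is a $\Un(n)$-isomorphism from $\un(n)^{\perp}\subseteq\lie{so}(M)$ onto the anti-Hermitian two-forms (so the anti-Hermitian part of $\nabla^{*}\nabla\omega$ vanishes iff $d^{*}\xi=0$), compute the Hermitian part as $-4\,\omega(\xi_{e_i}\cdot,\xi_{e_i}\cdot)$, and pass to (ii) via $(\nabla^{*}\nabla\omega)(X,Y)=\langle X,(\nabla^{*}\nabla J)Y\rangle$. The only difference is cosmetic: you package the first two terms of Lemma~\ref{lapstaten} as $-d^{*}\xi\cdot\omega$ via Lemma~\ref{coderxilem}, where the paper invokes Theorem~\ref{carharm}(iii), and you spell out the pointwise computation $A\cdot(A\cdot\omega)=4\,\omega(A\cdot,A\cdot)$ that the paper leaves implicit.
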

\begin{remark}
{\rm Condition (ii) represents the Euler-Lagrange equations given
in \cite{Wood1} for the harmonic almost Hermitian structure
determined by  $J$}.
\end{remark}
\begin{proof}
Using Theorem \ref{carharm}, Lemma \ref{lapstaten} and $\xi J = -J
\xi$, it follows that (i) implies (iv) and (iv) implies
$$
\left( ( \nabla^{\Lie{U}(n)}_{e_i} \xi)_{e_i} + \xi_{\xi_{e_i}e_i}
\right) \omega =0.
$$
But note that the map $A \to -\omega(A \cdot, \cdot) -  \omega(
\cdot, A
 \cdot)$ from $\lie{u}(n)^{\perp} \subseteq \lie{so}(2n)$ to
 $\lie{u}(n)^{\perp} \subseteq \Lambda^2 \mbox{T}^* M$  is an
 $U(n)$-isomorphism. Therefore, $( \nabla^{\Lie{U}(n)}_{e_i} \xi)_{e_i} +
\xi_{\xi_{e_i}e_i}=0$.

Taking into account that $(\nabla^{\Lie{U}(n)}_{e_i} \xi)_{e_i}
\omega, \; \xi^{\Lie{U}(n)}_{\xi_{e_i}e_i}  \omega $ belong to $
\lie{u}(n)^{\perp}$, the equivalence between (iii) and (iv) is an
immediate consequence of Lemma \ref{lapstaten} and $\xi J = -J
\xi$.

Because we have $(\nabla_{X}\omega)(Y,Z) = \langle Y,
(\nabla_{X}J)Y\rangle$, it follows that
$$
(\nabla^{*}\nabla \omega)(X,Y) = \langle  X , (\nabla^* \nabla
J)Y\rangle.
$$
This implies the equivalence between (ii) and (iii).
\end{proof}

Tricerri and Vanhecke \cite{Tricerri-Vanhecke:aH} gave a complete
decomposition of the Riemannian curvature tensor~$R$ of an almost
Hermitian manifold $M$ into irreducible $\Un(n)$-components. These
divide naturally into two groups, one forming the
space~$\Kah=\Cur(\un(n))$ of algebraic curvature tensors for a
K\"ahler manifold (characterised by $\xi=0$), and the other,
$\Kah^\perp$, being its orthogonal complement. Additionally,
Falcitelli et al.~\cite{Falcitelli-FS:aH} showed that the
components of~$R$ in~$\Kah^\perp$ are linearly determined by the
covariant derivative $\nabla \xi$.   By using the minimal
$\Un(n)$-connection $\nabla^{\Lie{U}(n)}$ of~$M$, Falcitelli et
al.\ display some tables which show whether or not the tensors
$\nabla^{\Lie{U}(n)} \xi_{(i)}$ and $\xi_{(i)}\odot\xi_{(j)}$
contribute to the components of~$R$ in~$\Kah^\perp$.  Some
variations of such tables have been given in \cite{FMCAS}.
Explanations for these variations are based in Equation
\eqref{d2omega:part20} given below. All of this has provided a
unified approach to many of the curvature results obtained by
Gray~\cite{Gray:curvature}.

For studying some components of $R$, it is necessary to consider
the usual Ricci curvature tensor  $\Ric$, associated to the metric
structure, and another tensor $\Ric^*$, called the {\it
$\ast$-Ricci curvature tensor}, associated to the almost Hermitian
structure and defined by $\Ric^* (X,Y) = \langle R_{X , e_i} JY ,
Je_i\rangle$.

 In general, $\Ric^*$ is not symmetric. However, because
$
 \Ric^*(JX,JY) = \Ric^*(Y,X)$,
  it
can be claimed that  its Hermitian part  coincides with its
symmetric part $\Ric^*_{\mbox{\footnotesize s}}$, and its
anti-Hermitian part is equal to its skew-symmetric part
$\Ric^*_{\mbox{\footnotesize alt}}$. Under the action of
$\Lie{U}(n)$, $\Ric^*$ is decomposed into $\Ric^* =
\Ric^*_{\mbox{\footnotesize s}} + \Ric^*_{\mbox{\footnotesize
alt}} $, where $\Ric^*_{\mbox{\footnotesize s}} \in \mathbb R
\langle \cdot , \cdot \rangle \oplus \lie{su}(n)_s \subseteq S^2
\mbox{T}^* M$ and $ \Ric^*_{\mbox{\footnotesize alt}} \in
\lie{u}(n)^{\perp} \subseteq \Lambda^2 \mbox{T}^* M$
\cite{Tricerri-Vanhecke:aH}. Because in the present work the
tensor $\Ric^*_{\mbox{\footnotesize alt}}$ will play a special
r{\^o}le, we recall the following result.
\begin{lemma}[\cite{FMCAS}] \label{astricciah}
 If $M$ be is an almost Hermitian $2n$-manifold with
  minimal $\Un(n)$-connection $\nabla^{\Lie{U}(n)}  = \nabla + \xi$, then
  the skew-symmetric part $\Ric^*_{\mbox{\footnotesize alt}}$ of the
$*$-Ricci tensor is given by
  \begin{equation}
    \label{otraricsh}
    \begin{split}
     \Ric^*_{\mbox{\rm \footnotesize alt}} (X,Y)
      & = - \inp{\xi_{J\xi_{e_i} e_i}JX}Y
      +  \inp{(\nabla^{\Lie{U}(n)}_{e_i}\xi)_{Je_i}JX}Y.
    \end{split}
\end{equation}
\end{lemma}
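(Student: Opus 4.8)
The plan is to start from the comparison between the Riemannian curvature $R$ and the curvature $R^{\Lie{U}(n)}$ of the minimal connection $\nabla^{\Lie{U}(n)}$. Repeating with $G=\Lie{U}(n)$ the computation carried out in the proof of Lemma~\ref{curvphi1}, one has
\[
R(X,Y) = R^{\Lie{U}(n)}(X,Y) + (\nabla^{\Lie{U}(n)}_X \xi)_Y - (\nabla^{\Lie{U}(n)}_Y \xi)_X + \xi_{\xi_X Y} - \xi_{\xi_Y X} - [\xi_X,\xi_Y],
\]
where $R^{\Lie{U}(n)}\in\Lambda^2\mbox{T}^{*}M\otimes\un(n)$. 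First I would substitute this into $\Ric^*(X,Y)=\langle R_{X,e_i}JY,Je_i\rangle$, thereby writing $\Ric^*$ as a sum of three pieces: a Ricci-type contraction of $R^{\Lie{U}(n)}$, a term linear in $\nabla^{\Lie{U}(n)}\xi$, and terms quadratic in $\xi$. For the first piece I would use that every endomorphism $R^{\Lie{U}(n)}_{X,e_i}$ lies in $\un(n)$ and hence commutes with $J$, so that $\langle R^{\Lie{U}(n)}_{X,e_i}JY,Je_i\rangle=\langle R^{\Lie{U}(n)}_{X,e_i}Y,e_i\rangle$; that is, the $R^{\Lie{U}(n)}$-contribution to $\Ric^*$ coincides, up to sign, with the Ricci tensor of the minimal connection.

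The next step is to pass to the skew-symmetric part, which by $\Ric^*(JX,JY)=\Ric^*(Y,X)$ is the anti-Hermitian part of $\Ric^*$; the Hermitian (symmetric) part plays no role and will be discarded. The key point is the first Bianchi identity for the \emph{metric connection with torsion} $\nabla^{\Lie{U}(n)}$, whose torsion is $T^{\Lie{U}(n)}(X,Y)=\xi_X Y-\xi_Y X$: contracting the cyclic sum of $R^{\Lie{U}(n)}$ against an orthonormal frame shows that the skew-symmetric part of the Ricci tensor of $\nabla^{\Lie{U}(n)}$ is itself linear in $\nabla^{\Lie{U}(n)}\xi$ and quadratic in $\xi$. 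Feeding this back, $\Ric^*_{\textrm{alt}}$ becomes expressed entirely through $\nabla^{\Lie{U}(n)}\xi$ and products of $\xi$'s.

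What remains is the algebraic simplification, and this I expect to be the main obstacle. It uses repeatedly the anticommutation $\xi J=-J\xi$ (so that all $\xi_{\bullet}$ and their $\nabla^{\Lie{U}(n)}$-derivatives anticommute with $J$), the skew-symmetry of the $\xi_{\bullet}$ as sections of $\lie{so}(M)$, the identity $2\,\xi_{e_i}e_i=-J(d^{*}\omega)^{\sharp}$ recalled just before Theorem~\ref{characharmherm1} (which is what produces the vector $J\xi_{e_i}e_i$ appearing in the statement), and the usual symmetries of the quadratic contractions $\langle\xi_{e_i}\,\cdot\,,\xi_{e_i}\,\cdot\,\rangle$, $\xi_{e_i}\xi_{e_i}$ and $\xi_{\xi_{e_i}e_i}$. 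Sorting the resulting terms into Hermitian-symmetric and anti-Hermitian ones and discarding the former, I expect everything to cancel except $-\langle\xi_{J\xi_{e_i}e_i}JX,Y\rangle$ and $\langle(\nabla^{\Lie{U}(n)}_{e_i}\xi)_{Je_i}JX,Y\rangle$, which is the asserted formula \eqref{otraricsh}.

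As a consistency check, both surviving terms are genuinely skew-symmetric in $X,Y$: for instance $\langle\xi_{J\xi_{e_j}e_j}JX,Y\rangle=-\langle\xi_{J\xi_{e_j}e_j}JY,X\rangle$, using the skew-symmetry of $\xi_{\bullet}$ together with $\xi J=-J\xi$, and similarly for the derivative term using in addition that $\nabla^{\Lie{U}(n)}$ preserves the $\Lie{U}(n)$-type of $\xi$, so that $(\nabla^{\Lie{U}(n)}_{e_i}\xi)_{\bullet}$ again lies in $\un(n)^{\perp}$. This confirms that the right-hand side of \eqref{otraricsh} indeed takes values in $\un(n)^{\perp}\subseteq\Lambda^2\mbox{T}^{*}M$, as it must.
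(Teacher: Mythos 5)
The paper itself offers no proof of this lemma --- it is imported verbatim from \cite{FMCAS} --- so you can only be judged against what a complete argument would require. Your overall strategy (substitute the comparison of Lemma~\ref{curvphi1} with $G=\Lie{U}(n)$ into $\Ric^*$, exploit that $\un(n)$-valued terms commute with $J$ while $\un(n)^{\perp}$-valued ones anticommute, discard the Hermitian-symmetric part) is sound, and your closing verification that both terms of \eqref{otraricsh} are skew-symmetric and anti-Hermitian is correct. The genuine gap is that the proof stops exactly where the lemma begins: the entire quantitative content --- that the surviving contractions reassemble into precisely $-\inp{\xi_{J\xi_{e_i}e_i}JX}{Y}+\inp{(\nabla^{\Lie{U}(n)}_{e_i}\xi)_{Je_i}JX}{Y}$, with those signs and those insertions of $J$ --- is deferred with ``I expect everything to cancel except\dots''. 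Note in particular that your route contracts $R(X,e_i)$ over a repeated $e_i$ in the second slot, which produces terms such as $\inp{(\nabla^{\Lie{U}(n)}_{e_i}\xi)_{X}Y}{e_i}$ and $\inp{\xi_{\xi_{X}e_i}Y}{e_i}$; nothing in the slot symmetries of $\xi$ alone converts these into the contraction $(\nabla^{\Lie{U}(n)}_{e_i}\xi)_{Je_i}$ with $Je_i$ in the \emph{subscript} slot, so the Bianchi identity with torsion that you invoke in passing is carrying all the weight, and none of that bookkeeping is done.

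There is a much shorter way to close the gap, which is worth recording because it shows the formula is a two-line consequence of Lemma~\ref{curvphi1}. The Riemannian first Bianchi identity gives $\Ric^*(X,Y)=\tfrac12\inp{R(e_i,Je_i)X}{JY}$ (this is the identity already used in the proof of Lemma~\ref{id:genera}). For $P\in\un(n)$ the form $\inp{PX}{JY}$ is symmetric and Hermitian, while for $Q\in\un(n)^{\perp}$ the form $\inp{QX}{JY}=\inp{QJX}{Y}$ is skew and anti-Hermitian; hence only the $\un(n)^{\perp}$-part of $\sum_iR(e_i,Je_i)$ contributes to $\Ric^*_{\mbox{\rm \footnotesize alt}}$. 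Contracting the formula of Lemma~\ref{curvphi1} over the pair $(e_i,Je_i)$ and using $\xi_{e_i}Je_i=-J\xi_{e_i}e_i$, $\sum_i(\nabla^{\Lie{U}(n)}_{Je_i}\xi)_{e_i}=-\sum_i(\nabla^{\Lie{U}(n)}_{e_i}\xi)_{Je_i}$ (reindex $e_i\mapsto Je_i$), everything quadratic in $\xi$ collapses and
\begin{equation*}
\sum_i R(e_i,Je_i)_{\un(n)^{\perp}}=2\,(\nabla^{\Lie{U}(n)}_{e_i}\xi)_{Je_i}-2\,\xi_{J\xi_{e_i}e_i},
\end{equation*}
which, inserted into $\Ric^*_{\mbox{\rm \footnotesize alt}}(X,Y)=\tfrac12\inp{\sum_iR(e_i,Je_i)_{\un(n)^{\perp}}JX}{Y}$, is exactly \eqref{otraricsh}. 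Until a computation of this kind is actually carried out, your proposal is a plausible plan rather than a proof.
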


From the fact $d^2 \omega=0$, writing $d^2 \omega$ by means of
$\nabla^{\Lie{U}(n)}$ and $\xi$, the identity
\begin{equation}
  \label{d2omega:part20}
  {\rm 
  \begin{array}{rl}
    0 =
    &3\inp{(\nabla^{\Lie{U}(n)}_{e_i}\xi_{(1)})_{e_i}X}Y
    - \inp{(\nabla^{\Lie{U}(n)}_{e_i}\xi_{(3)})_{e_i}X}Y
    + (n-2) \inp{(\nabla^{\Lie{U}(n)}_{e_i}\xi_{(4)})_{e_i}X}Y
    \\[2mm]
    &
    + \inp{{\xi_{(3)}}_Xe_i}{{\xi_{{(1)}e_i}}Y}
    - \inp{{\xi_{(3)}}_Ye_i}{{\xi_{(1)}}_{e_i}X}
    + \inp{{\xi_{(3)}}_Xe_i}{{\xi_{{(2)}e_i}}Y}
    - \inp{{\xi_{(3)}}_Ye_i}{{\xi_{{(2)}e_i}}X}
    \\[1mm]
    &
    - \displaystyle \frac{n-5}{n-1}\inp{{\xi_{{(1)} \xi_{{(4)} e_i} e_i}}X}Y
    - \displaystyle \frac{n-2}{n-1}\inp{{\xi_{{(2)}\xi_{{(4)} e_i} e_i}}X}Y
    +  \inp{{\xi_{{(3)}\xi_{{(4)} e_i} e_i}}X}Y
  \end{array} }
\end{equation}
was deduced in \cite{FMCAS}. Here, we will make use of
\eqref{d2omega:part20} below. Likewise, we need to point out that
${\xi_{{(4)} \, \xi_{e_i} e_i}} = 0$. In fact, this directly
follows from the expression for $\xi_{(4)}$ \cite{Gray-H:16} given
by
\begin{equation} \label{torsionw4}
 \langle   \xi_{{(4)}X} Y, JZ \rangle = -
\frac{1}{4(n-1)}
   \left\{     X^{\flat}  \wedge  d^* \omega (Y,Z)  -JX^{\flat} \wedge  Jd^* \omega (Y,Z)
   \right\}.
\end{equation}

Some results  proved in \cite{Wood1} are recovered in Theorem
\ref{classhermharm} below which is completed with other additional
results. In proving those results next Lemma will be useful.
\begin{lemma} \label{previo} For an almost Hermitian  $2n$-manifold $(M,\langle \cdot
,\cdot \rangle, J)$,  we have
\begin{eqnarray*}
2(n-1)   \langle (\nabla^{U(n)}_{e_i} \xi_{(4)})_{e_i} X , Y
\rangle & = & d (\xi^{\flat}_{e_i} e_i ) (X,Y) - d
(\xi^{\flat}_{e_i} e_i) (JX,JY) \\
 & & - 4 \langle \xi_{(1)\xi_{e_i} e_i } X , Y \rangle + 2 \langle  \xi_{(2)\xi_{e_i} e_i } X , Y
 \rangle .
\end{eqnarray*}
\end{lemma}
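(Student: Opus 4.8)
The plan is to compute the left-hand side $\langle (\nabla^{\Un(n)}_{e_i}\xi_{(4)})_{e_i}X,Y\rangle$ directly, and then recognise the result as the combination on the right-hand side after using the Gray--Hervella formula \eqref{torsionw4} for $\xi_{(4)}$ and the already-established identity $2\xi_{e_i}e_i = -J(d^*\omega)^\sharp$. Concretely, since $\xi_{(4)}$ is linearly determined by the one-form $d^*\omega$ (equivalently by the vector field $\xi_{e_i}e_i$) via \eqref{torsionw4}, the covariant derivative $\nabla^{\Un(n)}_{e_i}\xi_{(4)}$ is correspondingly determined by $\nabla^{\Un(n)}_{e_i}(\xi^\flat_{e_j}e_j)$, i.e.\ by the codifferential and differential of the one-form $\xi^\flat_{e_i}e_i$ computed with respect to the minimal connection. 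Because $\nabla^{\Un(n)}$ is metric and $U(n)$-reducible, the torsion correction between $\nabla$ and $\nabla^{\Un(n)}$ when expressing $d$ and $d^*$ of this one-form produces exactly the $\xi_{(1)}$ and $\xi_{(2)}$ terms $\langle\xi_{(1)\,\xi_{e_i}e_i}X,Y\rangle$ and $\langle\xi_{(2)\,\xi_{e_i}e_i}X,Y\rangle$; note that the $\xi_{(3)}$ and $\xi_{(4)}$ contributions to this correction vanish or cancel, the latter using the already-recorded fact $\xi_{(4)\,\xi_{e_i}e_i}=0$.

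First I would fix an orthonormal frame and substitute \eqref{torsionw4} to write, for a fixed pair $X,Y$,
\[
\langle (\nabla^{\Un(n)}_{e_i}\xi_{(4)})_{e_i}X,Y\rangle
= -\tfrac{1}{4(n-1)}\,(\nabla^{\Un(n)}_{e_i}\eta)\!\left(e_i^\flat\wedge(\,\cdot\,)\big|_{X,Y}\text{-type terms}\right),
\]
where $\eta = \xi^\flat_{e_j}e_j$; carrying out the contraction with the summed index $e_i$ turns the two wedge terms into $d^*\eta$-type and $d\eta$-type expressions plus lower-order curvature-free remainders. The alternating-in-$X,Y$ structure on the right (the appearance of both $(X,Y)$ and $(JX,JY)$) is forced by the anti-Hermitian projection built into $\xi_{(4)}$, so matching $d(\xi^\flat_{e_i}e_i)(X,Y) - d(\xi^\flat_{e_i}e_i)(JX,JY)$ is essentially automatic once the covariant exterior derivative is rewritten with the ordinary $d$. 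The bookkeeping step is to pass from $\nabla^{\Un(n)}$-derivatives to $\nabla$-derivatives of $\eta$: one uses $\nabla^{\Un(n)} = \nabla + \xi$ together with $\nabla^{\Un(n)}$ annihilating $J$ and $\omega$, so that the antisymmetrisation over $e_i$ and (say) $X$ produces $d\eta$ plus a term $\xi_{e_i}\eta$ contracted appropriately, and the latter is exactly where $\xi_{(1)}$ and $\xi_{(2)}$ enter through $\xi_{\,\xi_{e_i}e_i}$ acting on the relevant slots.

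The main obstacle I anticipate is purely combinatorial: keeping the numerical coefficients $-4$ and $+2$ straight through the antisymmetrisations and the two different projections ($d$ versus $d^*$), and correctly isolating which $\Wc{j}$-components of $\xi$ survive when $\xi_{\,\xi_{e_i}e_i}$ is decomposed. In particular one must verify that the $\xi_{(3)}$-contribution to the torsion correction genuinely cancels (this should follow from the symmetry $\xi_{(3)X}Y + \xi_{(3)Y}X$ patterns together with the trace being taken over $e_i$) and that $\xi_{(4)\,\xi_{e_i}e_i}=0$ removes the self-$\Wc4$ term. I would organise the computation so that the identity \eqref{d2omega:part20} — which already packages the $d^2\omega=0$ constraint in precisely these $\nabla^{\Un(n)}\xi_{(i)}$ and $\xi_{(i)}\odot\xi_{(j)}$ variables — can be invoked to eliminate any leftover $(\nabla^{\Un(n)}_{e_i}\xi_{(1)})_{e_i}$ and $(\nabla^{\Un(n)}_{e_i}\xi_{(3)})_{e_i}$ terms that appear, leaving only the $\xi_{(4)}$-derivative on the left and the stated right-hand side. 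This is routine but error-prone; no curvature input is needed, since the statement is an algebraic-differential identity in $\xi$ alone.
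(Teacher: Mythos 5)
Your strategy is essentially the one the paper uses: substitute the explicit Gray--Hervella expression \eqref{torsionw4} (equivalently \eqref{w4expre}) for $\xi_{(4)}$, take the divergence, and sort the correction terms coming from the discrepancy between $\nabla$ and $\nabla^{\Un(n)}$ into $\Wc{i}$-components using their $J$-symmetries; carried out, this does produce the stated coefficients $-4$ and $+2$, with the $\Wc3$ and $\Wc4$ contributions cancelling because $\xi_{(3)JX}JY=\xi_{(3)X}Y$ and likewise for $\xi_{(4)}$. The paper organises the bookkeeping the other way around — it first computes $(\nabla_{e_i}\xi_{(4)})_{e_i}$ with the Levi-Civita connection (whence the $\xi_{(1)}$, $\xi_{(2)}$ terms arise from differentiating $J$, via $\nabla J=2J\xi$, rather than from the torsion of $\nabla^{\Un(n)}$ as you suggest), and then shows that the correction $(\xi_{e_i}\xi_{(4)})_{e_i}$ exactly cancels the residual $\xi_{(3)}$ term — but this is only a difference of organisation, not of substance. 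Two caveats. First, your proposed appeal to \eqref{d2omega:part20} is a wrong turn: no terms $(\nabla^{\Un(n)}_{e_i}\xi_{(1)})_{e_i}$ or $(\nabla^{\Un(n)}_{e_i}\xi_{(3)})_{e_i}$ can arise here, since $\xi_{(4)}$ is built solely from the trace vector $\xi_{e_i}e_i$; the lemma uses no input from $d^2\omega=0$, and in any case that single identity could not eliminate two independent terms. Second, the entire content of the lemma is precisely the ``routine but error-prone'' bookkeeping you defer — the coefficients $-4$, $+2$ and the $\Wc3$ cancellation are asserted rather than derived — so as written the proposal is a correct plan rather than a proof.
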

\begin{proof} From the expression  \eqref{torsionw4} we have
\begin{eqnarray} \label{w4expre}
2(n-1)    \xi_{{(4)}X}  & = &      X^\flat \otimes   \xi_{e_i} e_i
-  \xi^{\flat}_{e_i} e_i   \otimes  X - JX^\flat \otimes  J
\xi_{e_i} e_i +  J \xi^{\flat}_{e_i} e_i \otimes  JX.
\end{eqnarray}
Now, fixing   a local orthonormal frame field $\{ e_1 , \ldots ,
e_{2n} \}$ such that $(\nabla_{e_i} e_j )_m =0$, for a given  $m
\in M$, we will compute $(\nabla_{e_i} \xi_{(4)})_{e_i} X)_m$. In
fact, by  a straightforward computation we will obtain
\begin{eqnarray*}
2(n-1)   \langle (\nabla_{e_i} \xi_{(4)})_{e_i} X , Y \rangle  & =
&  d (\xi^{\flat}_{e_i} e_i ) (X,Y) - d (\xi^{\flat}_{e_i} e_i)
(JX,JY)    \\
  &&  + 2 \langle  \xi_{JX} JY - \xi_{JY} J X  , \xi_{e_i} e_i ,
  \rangle.
\end{eqnarray*}
Then, taking the properties of $\xi_{(i)}$ given in
\cite{Gray-H:16} into account, we will get
\begin{eqnarray*}
  \langle \xi_{{(4)}X} Y - \xi_{{(4)}Y} X , \xi_{e_i} e_i
\rangle   & = & 0.
\end{eqnarray*}
Thus, we will obtain the identity
\begin{eqnarray*}
2(n-1)   \langle (\nabla_{e_i} \xi_{(4)})_{e_i} X , Y \rangle  & =
& d (\xi^{\flat}_{e_i} e_i ) (X,Y) - d (\xi^{\flat}_{e_i} e_i)
(JX,JY) - 4 \langle \xi_{(1)\xi_{e_i} e_i } X , Y \rangle
     \\
      &&   + 2 \langle  \xi_{(2)\xi_{e_i} e_i } X , Y  \rangle
        + 2 \langle  \xi_{(3)X } Y - \xi_{(3)Y } X, \xi_{e_i} e_i
        \rangle.
\end{eqnarray*}
Finally, it is not hard to show
$$
 2(n-1)   \langle (\xi_{e_i} \xi_{(4)})_{e_i} X ,
Y \rangle = - 2 \langle  \xi_{(3)X } Y - \xi_{(3)Y } X  ,
\xi_{e_i} e_i  \rangle .
$$
From the last two identities, the required identity in Lemma
follows.
\end{proof}

\begin{theorem} \label{classhermharm}
 For an almost Hermitian  $2n$-manifold $(M,\langle \cdot
,\cdot \rangle, J)$,  we have:
\begin{enumerate}
  \item[{\rm (i)}] If $M$ is of type $\mathcal{W}_1 \oplus
\mathcal{W}_2 \oplus \mathcal{W}_4$, then the almost Hermitian
structure is harmonic if and only if
\begin{eqnarray*}
 \qquad \quad (n-1) \Ric_{alt}^* (X,Y) & = &   d (\xi^{\flat}_{e_i} e_i ) (X,Y) - d (\xi^{\flat}_{e_i}
e_i) (JX,JY) + 2 (n-3) \langle  \xi_{(1)\xi_{e_i} e_i } X , Y
\rangle \\
 & &  + 2n  \langle  \xi_{(2)\xi_{e_i} e_i } X , Y \rangle .
\end{eqnarray*}
 \item[{\rm (ii)}] If $M$ is quasi-K{\"a}hler  $(\mathcal{W}_1 \oplus
\mathcal{W}_2)$, then the almost Hermitian structure  is harmonic
if and only if $\Ric^*_{\mbox{\rm \footnotesize alt}} = 0$.

\item[{\rm (iii)}]
 If $M$ is locally conformal  almost K{\"a}hler $(\mathcal{W}_2
\oplus \mathcal{W}_4)$, then the almost Hermitian structure is
harmonic
 if and only if
\begin{equation*}
(n-1) \Ric^*_{\mbox{\rm \footnotesize alt}}(X,Y) =  2n \langle
\xi_{\xi_{e_i} e_i} X, Y \rangle,
\end{equation*}
for all $X,Y \in \mathfrak X (M)$.

 \item[{\rm (iv)}] If $M$ is
of type $\mathcal{W}_1 \oplus \mathcal{W}_4$ and $n \neq 2$, then
the almost Hermitian structure is harmonic  if and only if
\begin{equation*} 
(n-1)(n-5) \Ric^*_{\mbox{\rm \footnotesize alt}}(X,Y) =  2(n+1)(
n-3)\langle \xi_{ \xi_{e_i} e_i} X, Y \rangle,
\end{equation*}
for all $X,Y \in \mathfrak X (M)$.

  \item[{\rm (v)}] If $M$  is
Hermitian $(\mathcal{W}_3 \oplus \mathcal{W}_4)$, then the almost
Hermitian structure  is harmonic
 if and only if
\begin{equation*}
 \Ric^*_{\mbox{\rm \footnotesize alt}} (X,Y) =  - 2 \langle
\xi_{\xi_{e_i} e_i} X , Y \rangle.
\end{equation*}
\end{enumerate}
 In particular:
 \begin{enumerate}
  \item[\rm (i)$^*$] A nearly K{\"a}hler structure
$(\mathcal{W}_1)$ is a harmonic map.
  \item[\rm (ii)$^*$] If the exterior derivative of the  Lee form is Hermitian $($in particular, if it is closed$)$, a Hermitian
  structure is harmonic if and only if $\Ric^*_{\mbox{\rm \footnotesize
  alt}}=0$.
 \item[\rm (iii)$^*$] A balanced Hermitian structure $(\mathcal{W}_3)$ is a harmonic almost Hermitian structure.
  \item[\rm (iv)$^*$] A locally conformal K{\"a}hler  structure $(\mathcal{W}_4)$ is
a harmonic  almost Hermitian structure. In such a case, the Lee
form is closed and, therefore,   $ \Ric^*_{\mbox{\rm \footnotesize
alt}} =0$.
   \end{enumerate}
\end{theorem}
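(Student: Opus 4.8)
The plan is to derive all five equivalences, and then the corollaries, from one central identity together with a couple of standing auxiliary relations. First I set up notation: write $\xi=\sum_i\xi_{(i)}$ for the Gray--Hervella decomposition, put $V:=\xi_{e_j}e_j$, and set $D_i:=(\nabla^{\Lie{U}(n)}_{e_j}\xi_{(i)})_{e_j}$ and $Q_i:=\xi_{(i)\,V}$; both are sections of $\lie{u}(n)^{\perp}$ because $\nabla^{\Lie{U}(n)}$ is a $\Lie{U}(n)$-connection and hence preserves $\lie{u}(n)^{\perp}$ and each $\Wc i$. By Theorem \ref{carharm}(iii) the structure is harmonic precisely when $\sum_i(D_i+Q_i)=0$. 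The bridge to the $*$-Ricci tensor is the fact that each $\Wc i$ has a fixed $J$-type in its first slot: $\xi_{(i)\,JX}=\epsilon_i\,J\,\xi_{(i)\,X}$ with $\epsilon_1=\epsilon_2=-1$ (the quasi-K\"ahler part, from $(\nabla_XJ)Y+(\nabla_{JX}J)JY=0$ together with $\xi J=-J\xi$) and $\epsilon_3=\epsilon_4=+1$ (the Hermitian part; for $\Wc4$ this is read off directly from \eqref{torsionw4}). Since $\nabla^{\Lie{U}(n)}J=0$ this relation is $\nabla^{\Lie{U}(n)}$-parallel, so $(\nabla^{\Lie{U}(n)}_{e_j}\xi)_{Je_j}=\sum_i\epsilon_i\,JD_i$ and $\xi_{JV}=\sum_i\epsilon_i\,JQ_i$; substituting into Lemma \ref{astricciah} and using $JAJ=A$ for $A\in\lie{u}(n)^{\perp}$ gives the unconditional identity
\[
\Ric^{*}_{\mbox{\footnotesize alt}}=\textstyle\sum_i\epsilon_i(D_i-Q_i)=-(D_1-Q_1)-(D_2-Q_2)+(D_3-Q_3)+(D_4-Q_4).
\]

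Two further \emph{unconditional} inputs complete the toolkit: Lemma \ref{previo}, in the form $2(n-1)D_4=\mathbf E-4Q_1+2Q_2$ where $\mathbf E(X,Y):=d(V^{\flat})(X,Y)-d(V^{\flat})(JX,JY)$; the $d^{2}\omega=0$ identity \eqref{d2omega:part20}; and the remark $Q_4=\xi_{(4)\,V}=0$. For part (i), $\Wc3$ is absent so $D_3=Q_3=0$: from the displayed identity and $\sum_i(D_i+Q_i)=0$ one eliminates $D_1+D_2$ to get $\Ric^{*}_{\mbox{\footnotesize alt}}=2(D_4+Q_1+Q_2)$, and feeding in $2(n-1)D_4=\mathbf E-4Q_1+2Q_2$ yields exactly $(n-1)\Ric^{*}_{\mbox{\footnotesize alt}}=\mathbf E+2(n-3)Q_1+2nQ_2$; for the converse one starts from that equation and, re-using the (still unconditional) displayed identity and Lemma \ref{previo} to re-substitute $\mathbf E$, recovers $\sum_i(D_i+Q_i)=0$.

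Parts (ii)--(v) are then specialisations of the same bookkeeping. In the quasi-K\"ahler class the Lee form vanishes, so $V=0$, every $Q_i=0$ and $\mathbf E=0$, and part (i)'s formula collapses to $\Ric^{*}_{\mbox{\footnotesize alt}}=0$, proving (ii). In the Hermitian class $\xi_{(1)}=\xi_{(2)}=0$, so the displayed identity reads $\Ric^{*}_{\mbox{\footnotesize alt}}=D_3+D_4-Q_3$ while harmonicity is $D_3+D_4+Q_3=0$ (using $Q_4=0$); subtracting gives $\Ric^{*}_{\mbox{\footnotesize alt}}=-2Q_3$, which is (v). For (iii), class $\Wc2\oplus\Wc4$, the identity \eqref{d2omega:part20} forces $(n-2)D_4=\tfrac{n-2}{n-1}Q_2$, hence $\mathbf E=0$ by Lemma \ref{previo}, so part (i)'s formula reduces to $(n-1)\Ric^{*}_{\mbox{\footnotesize alt}}=2nQ_2$. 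For (iv), class $\Wc1\oplus\Wc4$ with $n\neq 2$, \eqref{d2omega:part20} together with harmonicity determines $D_1$ and $D_4$ as multiples of $Q_1$, and the identity $4(n-2)+(n-1)(n-5)=(n+1)(n-3)$ gives $(n-1)(n-5)\Ric^{*}_{\mbox{\footnotesize alt}}=2(n+1)(n-3)Q_1$. The corollaries follow by setting all but one component to zero: in pure $\Wc3$ (resp.\ $\Wc4$) the identity \eqref{d2omega:part20} forces $D_3=0$ (resp.\ $D_4=0$), hence $\Ric^{*}_{\mbox{\footnotesize alt}}=0$ and harmonicity; a nearly-K\"ahler structure is harmonic (specialise to pure $\Wc1$, using the classical vanishing of $(\nabla^{\Lie{U}(n)}_{e_i}\xi)_{e_i}$, equivalently $\Ric^{*}_{\mbox{\footnotesize alt}}=0$, there) and, having totally skew torsion, a harmonic map by Proposition \ref{pro:skew}(ii); for a $\Wc4$-structure the Lee form is closed, so $\mathbf E=0$ and $\Ric^{*}_{\mbox{\footnotesize alt}}=0$, and (ii)$^{*}$ is the corresponding reading of (v).

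The main effort lies in the last two paragraphs: one must track, class by class, exactly which of $D_i,Q_i,\mathbf E$ survive, and verify that the rational coefficients produced by \eqref{d2omega:part20} and Lemma \ref{previo} collapse to the clean constants asserted (the $n$-dependent algebraic simplifications being the most delicate point); one must also isolate the degenerate dimensions $n=2$ (where $\Wc1=\Wc3=0$) and $n=5$ (where $(n-1)(n-5)$ vanishes in (iv), so there the equivalence reads $\xi_{(1)\,V}=0$). The single conceptual point is justifying the $J$-type bridge $\epsilon_i=\mp1$ cleanly --- equivalently, placing $\Wc3$ in the Hermitian eigenspace of the involution $\xi\mapsto(X\mapsto-J\,\xi_{JX})$.
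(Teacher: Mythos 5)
Your proposal is correct and follows essentially the same route as the paper: both arguments rest on Lemma \ref{astricciah} combined with the $J$-type of each Gray--Hervella component, the harmonicity criterion of Theorem \ref{carharm}, Lemma \ref{previo} and identity \eqref{d2omega:part20}, followed by the same class-by-class linear eliminations. The only divergences are minor: you package the computation into the single unconditional identity $\Ric^*_{\rm alt}=\sum_i\epsilon_i(D_i-Q_i)$ rather than rewriting Lemma \ref{astricciah} separately for each class, and in (iii) you derive the vanishing of the anti-Hermitian part of $d(\xi^{\flat}_{e_i}e_i)$ internally from \eqref{d2omega:part20} and Lemma \ref{previo} (an argument valid only for $n\neq 2$), where the paper instead invokes the closedness of the Lee form for locally conformal almost K\"ahler manifolds.
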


\begin{proof}
For (i). By  Lemma \ref{astricciah}, using  the properties of
$\xi_{(i)}$ given in \cite{Gray-H:16}, we have
 \begin{equation}
    \begin{split}
     \Ric^*_{\mbox{\rm \footnotesize alt}} (X,Y)
      & = \inp{\xi_{(1)\xi_{e_i} e_i}X}Y
           + \inp{\xi_{(2)\xi_{e_i} e_i}X}Y
      -  \inp{(\nabla^{\Lie{U}(n)}_{e_i}\xi_{(1)})_{e_i}X}Y\\
   & \quad
      -  \inp{(\nabla^{\Lie{U}(n)}_{e_i}\xi_{(2)})_{e_i}X}Y
      +  \inp{(\nabla^{\Lie{U}(n)}_{e_i}\xi_{(4)})_{e_i}X}Y.
    \end{split} \nonumber
\end{equation}
Now,  by Theorem \ref{carharm} and Lemma \ref{previo}, (i)
follows. In particular, if the structure is nearly K\"{a}hler,  by
Equation \eqref{d2omega:part20}, we have
 $(\nabla^{\Lie{U}(n)}_{e_i}\xi)_{e_i} =0$. Thus,
we get $\Ric^*_{\mbox{\rm \footnotesize alt}}=0$. Finally, by
Proposition \ref{pro:skew} (ii),  (i)$^*$ follows.

 Parts (ii) and (iii) are immediate consequences of (i).  We recall that,
 in case of locally conformal almost K\"{a}hler manifolds,  the Lee one-form is
closed. This fact is well known. In particular, if the structure
is locally conformal K\"{a}hler, then
$\inp{(\nabla^{\Lie{U}(n)}_{e_i}\xi)_{e_i}X}Y =0$ by Lemma
\ref{previo}. Moreover, we will also have $\inp{\xi_{(4)\xi_{e_i}
e_i}X}Y =0$. Then (iv)$^*$ follows.

 For (iv). Because the structure is of type $\mathcal{W}_1 \oplus
\mathcal{W}_4$, Equation \eqref{d2omega:part20} and Equation
\eqref{otraricsh} are respectively given by
\begin{equation}
  \label{d2omega:part20w1w4}
  \begin{array}{rl}
    0
    =& \;
3\inp{(\nabla^{\Lie{U}(n)}_{e_i}\xi_{(1)})_{e_i}X}Y +
     (n-2)
    \inp{(\nabla^{\Lie{U}(n)}_{e_i}\xi_{(4)})_{e_i}X}Y
      - \frac{n-5}{n-1}\inp{{\xi_{{(1)} \xi_{{(4)}e_i} e_i}}X}Y
   ,
  \end{array}
\end{equation}
\begin{equation}
    \label{otraricshw1w4}
    \begin{array}{rl}
      \Ric^*_{\mbox{\rm\footnotesize alt}} (X,Y)
      = &  \inp{\xi_{{(1)}\xi_{{(4)}e_i} e_i}X}Y
      -\inp{(\nabla^{\Lie{U}(n)}_{e_i}\xi_{(1)})_{e_i}X}Y
      +\inp{(\nabla^{\Lie{U}(n)}_{e_i}\xi_{(4)})_{e_i}X}Y      .
    \end{array}
\end{equation}
Likewise, the characterising  condition for harmonic almost
Hermitian  structures given in Theorem \ref{carharm} is expressed
by
\begin{equation} \label{characharmhermw1w4}
- \langle \xi_{{(1)} \xi_{{(4)}e_i} e_i} X , Y \rangle = \langle (
\nabla^{\Lie{U}(n)}_{e_i}
 \xi_{(1)})_{e_i} X , Y \rangle  + \langle (
\nabla^{\Lie{U}(n)}_{e_i}
 \xi_{(4)})_{e_i} X , Y \rangle .
\end{equation}
Now, for $n \geq 3$, it is straightforward to check that Equation
\eqref{d2omega:part20w1w4},  Equation \eqref{otraricshw1w4} and
Equation \eqref{characharmhermw1w4} imply the expression for
$\Ric^*_{\mbox{\rm \footnotesize alt}}$ required in (iv).

Reciprocally, it is also direct to see that such an expression for
$\Ric^*_{\mbox{\rm \footnotesize alt}}$, Equation
\eqref{d2omega:part20w1w4} and Equation \eqref{otraricshw1w4}
 imply Equation \eqref{characharmhermw1w4}. Therefore, the almost
 Hermitian structure is harmonic.

For (v). The intrinsic torsion $\xi$ for Hermitian structures is
such that $\xi_{JX} JY = \xi_{X} Y$ \cite{Gray-H:16}. Therefore,
the required identity in (v) is an immediate consequence of
Theorem \ref{carharm} and Lemma \ref{astricciah}.

For (ii)$^*$. By Lemma \ref{previo}, if the exterior derivative of
the  Lee form is Hermitian, then
             $(\nabla^{\Lie{U}(n)}_{e_i}\xi_{(4)})_{e_i} = 0$ in
             this case. But we also have
             $(\nabla^{\Lie{U}(n)}_{e_i}\xi_{(3)})_{e_i}= \xi_{{(3)} \xi_{e_i} e_i}$  by
             \eqref{d2omega:part20}. Therefore, the assertion is a
             consequence of (v).

For (iii)$^*$. Now, we have $\xi_{e_i} e_i=0$. Moreover, Equation
\eqref{d2omega:part20} implies $(
\nabla^{\Lie{U}(n)}_{e_i}\xi_{(3)})_{e_i} = 0$.
\end{proof}

\begin{example}{\rm It is well-known that a $3$-symmetric space
$(M,\langle\cdot,\cdot\rangle)$ admits a canonical almost complex
structure $J$ compatible with $\langle\cdot,\cdot\rangle$ and
$(M,\langle\cdot,\cdot\rangle, J)$ becomes into a quasi-K{\"a}hler
manifold. Further, the intrinsic torsion $\xi=
-\frac{1}{2}J(\nabla J)$ of the corresponding $U(n)$-structure is
a homogeneous structure (see for example \cite{Sato}). Hence,
$\xi$ is $\nabla^{U(n)}$-parallel and then we get
$\Ric^*_{\mbox{\rm \footnotesize alt}}= 0.$ Then, from Theorem
\ref{classhermharm} (ii), we can concluse that} the canonical
almost Hermitian structure of a $3$-symmetric space is harmonic.
\end{example}

If we write $\langle \xi_{(1)X} Y , Z \rangle = \Psi_{\xi}
(X,Y,Z)$, then  $\Psi_{\xi}$ is a skew-symmetric three-form such
that $\Psi_{\xi} (JX,JY,Z)= -\Psi_{\xi} (X,Y,Z)$ \cite{Gray-H:16}.
For $n \geq 3$, if we have a harmonic almost Hermitian structure
of type $\mathcal W_1 \oplus \mathcal W_4$, then it follows, using
Theorem \ref{characharmherm1} (iv) and Equation \eqref{torsionw4},
that the connection Laplacian of $\omega$ is given by
$$
\nabla^* \nabla \omega (X,Y) = 4  \langle X \lrcorner \Psi_{\xi} ,
JY \lrcorner \Psi_{\xi} \rangle + \frac{1}{4(n-1)^2} d^* \omega
\wedge J d^* \omega (X,Y).
$$
Note that, in general, the right side of this equality is not
collinear with $\omega$. In particular, if $n=3$, we obtain
$$
\nabla^* \nabla \omega = \frac{ \| \Psi_{\xi} \|^2}{36} \omega +
\frac{1}{16} d^* \omega \wedge  J d^* \omega.
$$

A harmonic  section $\sigma$ into a sphere  bundle  of a
Riemannian vector bundle is  characterised by the condition
$\nabla^*\nabla \sigma = \frac{\| \nabla \sigma\|^2}{\| \sigma
\|^2} \sigma$ or, equivalently, $\nabla^*\nabla \sigma$ is
collinear with $\sigma$ (see \cite{GMS}, \cite{Salvai}). From the
previous paragraphs, the first part of next result is immediate.
\begin{proposition} For six-dimensions,
the nearly K\"{a}hler structures are the only harmonic almost
Hermitian structures of type $\Wc1 + \Wc4$, such that $\omega$ is
also  a  harmonic  section into a sphere bundle in $\Lambda^2 T^*
M$. For four-dimensions, locally conformal K\"{a}hler structures
implies that  $\omega$ is   a  harmonic section into  a sphere
bundle in $\Lambda^2 T^* M$.
\end{proposition}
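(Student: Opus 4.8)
The plan is to read $\nabla^{*}\nabla\omega$ off the two computations of the connection Laplacian of $\omega$ displayed just before the statement (valid for $n=3$) and off the harmonicity formula of Theorem~\ref{characharmherm1}~(iv) (for $n=2$), and then, in each dimension, to decide exactly when $\nabla^{*}\nabla\omega$ is pointwise collinear with $\omega$; by the characterisation of harmonic sections of a sphere bundle recalled above, and since $\|\omega\|$ is constant (so that $\omega$ is genuinely a section of a sphere bundle in $\Lambda^{2}T^{*}M$), this collinearity is exactly the assertion to be proved.

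For the six-dimensional case, a harmonic almost Hermitian structure of type $\Wc1\oplus\Wc4$ satisfies $\nabla^{*}\nabla\omega=\tfrac{1}{36}\|\Psi_{\xi}\|^{2}\,\omega+\tfrac{1}{16}\,d^{*}\omega\wedge J d^{*}\omega$, so $\nabla^{*}\nabla\omega$ is collinear with $\omega$ if and only if $d^{*}\omega\wedge J d^{*}\omega$ is. The key step is a pointwise linear-algebra argument: $d^{*}\omega\wedge J d^{*}\omega$ is decomposable (a wedge of two one-forms), hence if it equals $c\,\omega$ at a point then $(c\,\omega)\wedge(c\,\omega)=0$; since $\omega\wedge\omega\neq 0$ in dimension six this forces $c=0$, whereupon $d^{*}\omega$ and $J d^{*}\omega$ are linearly dependent, which is impossible (as $J^{2}=-{\rm Id}$ has no real eigenvalue) unless $d^{*}\omega=0$ at that point. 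Thus $\omega$ is a harmonic section precisely when $d^{*}\omega\equiv 0$, and by \eqref{torsionw4} this is equivalent to $\xi_{(4)}=0$, i.e.\ to the structure being nearly K\"ahler. Conversely a nearly K\"ahler structure lies in $\Wc1\subseteq\Wc1\oplus\Wc4$, has $d^{*}\omega=0$ so that $\nabla^{*}\nabla\omega=\tfrac{1}{36}\|\Psi_{\xi}\|^{2}\,\omega$ is collinear with $\omega$, and is a harmonic almost Hermitian structure --- indeed a harmonic map --- by Theorem~\ref{classhermharm}. This gives the first assertion.

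For the four-dimensional case, a locally conformal K\"ahler structure is of type $\Wc4$ with closed Lee form, hence a harmonic almost Hermitian structure by Theorem~\ref{classhermharm}~(iv)$^{*}$; so Theorem~\ref{characharmherm1}~(iv) applies and yields $\nabla^{*}\nabla\omega(X,Y)=-4\,\omega(\xi_{e_{i}}X,\xi_{e_{i}}Y)=4\sum_{i}\langle\xi_{e_{i}}X,\xi_{e_{i}}JY\rangle$, using $\xi J=-J\xi$. Writing $V=(d^{*}\omega)^{\sharp}$ and inserting the explicit $\Wc4$-torsion \eqref{torsionw4}, which for $n=2$ reads $\xi_{X}Y=-\tfrac14\big(\langle X,Y\rangle JV-\langle V,Y\rangle JX-\langle X,JY\rangle V+\langle V,JY\rangle X\big)$, one contracts over an orthonormal frame using $\sum_{i}\langle e_{i},A\rangle\langle e_{i},B\rangle=\langle A,B\rangle$ and $\langle V,JV\rangle=0$. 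The main obstacle is exactly this contraction: the terms carrying a factor $\langle V,X\rangle$ or $\langle V,Y\rangle$ collect with coefficients proportional to $2n-4$ and $4-2n$, which vanish precisely when $n=2$, so that what remains is $\sum_{i}\langle\xi_{e_{i}}X,\xi_{e_{i}}JY\rangle=\tfrac18|d^{*}\omega|^{2}\langle X,JY\rangle=\tfrac18|d^{*}\omega|^{2}\,\omega(X,Y)$. Hence $\nabla^{*}\nabla\omega=\tfrac12|d^{*}\omega|^{2}\,\omega$ is collinear with $\omega$, so $\omega$ is a harmonic section into a sphere bundle in $\Lambda^{2}T^{*}M$; this is the second assertion.
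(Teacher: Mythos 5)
Your proof is correct. For the six-dimensional claim you follow essentially the paper's route: the identity $\nabla^*\nabla\omega=\tfrac{1}{36}\|\Psi_{\xi}\|^{2}\omega+\tfrac{1}{16}\,d^*\omega\wedge Jd^*\omega$ is quoted from the preceding paragraphs, and the paper simply declares the conclusion ``immediate''; your wedge-square argument (a decomposable two-form collinear with $\omega$ must vanish because $\omega\wedge\omega\neq0$ in dimension six, and $d^*\omega\wedge Jd^*\omega=0$ then forces $d^*\omega=0$ since $J$ has no real eigenvalue) is precisely the linear algebra the paper leaves unsaid, so this part is a faithful filling-in rather than a new argument. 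For the four-dimensional claim you genuinely diverge. The paper computes $\nabla^*\nabla\omega$ from scratch, starting from $\nabla_X\omega=X^{\flat}\wedge(\theta^{\sharp}\lrcorner\omega)-\theta\wedge(X\lrcorner\omega)$ with $\theta=\tfrac12 Jd^*\omega$ the closed Lee form, and contracting with pointwise wedge identities; it never invokes harmonicity of the structure. You instead use Theorem \ref{classhermharm} (iv)$^*$ to know the structure is harmonic, pass through the equivalence (i)$\Leftrightarrow$(iv) of Theorem \ref{characharmherm1} to write $\nabla^*\nabla\omega(X,Y)=4\langle\xi_{e_i}X,\xi_{e_i}JY\rangle$, and contract the explicit $\Wc4$-torsion \eqref{torsionw4}; the cross terms carry the factor $2n-4$, which vanishes exactly in dimension four, and you land on $\tfrac12\|d^*\omega\|^{2}\omega=2\|\theta\|^{2}\omega$, in agreement with the paper. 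Your route has the merit of exposing the dimensional coincidence $2n-4=0$ (which explains why the second assertion is special to four dimensions) and of recycling machinery already established; the paper's direct computation is more self-contained, needing only the closedness of the Lee form.
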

\begin{proof} Let $M$ be a locally conformal K\"{a}hler four-manifold.
 In order to compute $(\nabla^* \nabla \omega)_m$, for $m \in M$, we
will consider a local orthonormal frame field $\{ e_1 , \ldots ,
e_4 \}$ such that $(\nabla_{e_i} e_j )_m =0$. Thus, because in
this case,  $\nabla_X \omega = X^\flat \wedge (\theta^\sharp
\lrcorner \omega) - \theta \wedge (X \lrcorner \omega)$, where
$\theta  = \frac12 J d^* \omega = - \xi_{e_i} e_i$
\cite{Gray-H:16}, we have
$$
(\nabla^* \nabla \omega)_m = - e_i^\flat \wedge  (\theta^\sharp
\lrcorner (\nabla_{e_i} \omega)) - e_i^\flat \wedge (
(\nabla_{e_i} \theta)^\sharp \lrcorner \omega) + \nabla_{e_i}
\theta \wedge (e_i \lrcorner \omega) - \theta \wedge d^* \omega.
$$
Now, using the expression for $\nabla \omega$ and the identities
$e_i \wedge (e_i \lrcorner \omega) = 2 \omega$ and $e_i^\flat
\wedge \theta \wedge (\theta^\sharp \lrcorner ( e_i \lrcorner
\omega))=  \theta \wedge (\theta^\sharp \lrcorner \omega)$, we
obtain
$$
 - e_i^\flat \wedge  (\theta^\sharp
\lrcorner (\nabla_{e_i} \omega)) = - 2 \theta \wedge (
\theta^\sharp \lrcorner \omega) + 2 \| \theta \|^2 \omega.
$$
Moreover, because $\theta$ is closed, we have $(\nabla_X
\theta)(Y) = (\nabla_Y \theta)(X)$ and  it is not hard to see
$$
 e_i^\flat \wedge ( (\nabla_{e_i}
\theta)^\sharp \lrcorner \omega) = \nabla_{e_i} \theta \wedge (e_i
\lrcorner \omega).
$$
Finally, from all of this and $d^* \omega =- 2 \theta^\sharp
\lrcorner\omega$, we get $\nabla^* \nabla \omega = 2 \| \theta
\|^2 \omega $.
\end{proof}

\begin{remark}{\rm
For nearly K\"{a}hler connected six-manifolds which are not K\"{a}hler,
 if $5 \alpha$ denotes the Einstein constant and using
\cite[Equation (3.10)]{FMCAS}, we have $$ \nabla^* \nabla \omega
(X,Y) =
  4  \langle \xi_{e_i} X,
\xi_{e_i}JY \rangle = 4 \alpha \, \omega(X,Y).
$$
Therefore, $\| \Psi_{\xi} \|^2= 144 \alpha$.

On the other hand, for locally conformal K\"{a}hler four-manifolds, we
have  $\nabla^* \nabla \omega = 2 \| \theta \|^2 \omega $.
Therefore, $ \frac1{16}  \| \nabla \omega \|^2 =  \frac12 \|
\theta \|^2 = \frac12 \| \xi_{e_i} e_i \|^2 = \frac18  \| J d^*
\omega\|^2 $ that, in general,  it is not constant. }
\end{remark}

 In \cite{BHLS}, Bor et al. have shown diverse results relative to the energy of almost
Hermitian structures defined on  certain compact Riemannian
manifolds. Concretely, they prove the following
\begin{theorem}[\cite{BHLS}] \label{bor-Hlam-Salva} Let
$(M^{2n},\langle \cdot , \cdot \rangle )$ be a compact Riemannian
manifold such that
\begin{enumerate}
 \item[$\bullet$] $n \geq 3$ and $(M,\langle \cdot , \cdot \rangle
 )$is conformally flat, or
 \item[$\bullet$] $n =2$ and $(M,\langle \cdot , \cdot \rangle
 )$ is anti-self-dual.
\end{enumerate}
Then an orthogonal almost complex structure $J$ on $M$ is an
energy minimiser in each one of the following three cases:
\begin{enumerate}
\item[\rm (i)] $n=3$ and $J$ is of type $\mathcal W_1 \oplus
\mathcal W_4$.
 \item[\rm (ii)] $n=2$ or $n\geq 4$ and $J$ is of
type $\mathcal W_4$.
 \item[\rm (iii)] $n$ arbitrary and $J$ is of type $\mathcal W_2$.
\end{enumerate}
\end{theorem}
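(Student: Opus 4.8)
The plan is to combine the identity of Theorem~\ref{siginttor} with the curvature restrictions coming from conformal flatness, following the argument of Bor et al.~\cite{BHLS}. By Theorem~\ref{siginttor}, for an orthogonal almost complex structure $J$ on the compact manifold $M$ one has $\mathcal{E}(J)=n\,\mathrm{Vol}(M)+\tfrac12\int_M\|\xi\|^2\,dv$, where $\xi=\xi^{\Lie{U}(n)}$ is the intrinsic torsion; since $M$ and $\langle\cdot,\cdot\rangle$ are fixed and only $J$ varies, minimising $\mathcal{E}$ amounts to minimising $\int_M\|\xi\|^2\,dv=\sum_{i=1}^4\int_M\|\xi_{(i)}\|^2\,dv$, the last equality by orthogonality of the Gray--Hervella decomposition, with $\xi_{(i)}=0$ in the ranges of $n$ in which the module $\mathcal{W}_i$ does not occur. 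The strategy is then to produce, for suitable real numbers $a_1,\dots,a_4$, a \emph{universal} equality
\[
\sum_{i=1}^4 a_i\int_M\|\xi_{(i)}\|^2\,dv = T(M) + \int_M Q(\RC)\,dv ,
\]
valid for every orthogonal $J$, where $T(M)$ depends only on $\langle\cdot,\cdot\rangle$ and on the topology of $M$ --- a fixed combination of $\int_M s\,dv$, of $\chi$, of the signature when $\dim M=4$, and of the Chern numbers $c_1^2$, $c_2$ of the stable almost complex structure --- while $Q(\RC)$ is a curvature quadratic form in which the Weyl tensor and the trace-free Ricci tensor enter.

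Such an identity should be obtained by writing the Chern--Weil representatives of $c_1^2$ and $c_2$ for the canonical $\Un(n)$-connection in terms of $\RC$ and $\nabla\omega$, expanding $\RC$ along the Tricerri--Vanhecke decomposition~\cite{Tricerri-Vanhecke:aH} and $\nabla\omega$ (equivalently $\xi$) along the Gray--Hervella decomposition~\cite{Gray-H:16}, and then eliminating the part of $\RC$ lying in the K\"ahler space $\Kah$ by subtracting the Gauss--Bonnet integrand and, when $\dim M=4$, the Hirzebruch signature integrand; the tables of Falcitelli et al.~\cite{Falcitelli-FS:aH}, their refinements in~\cite{FMCAS}, and the identity~\eqref{d2omega:part20} issuing from $d^2\omega=0$ are what control the mixed terms $\nabla^{\Lie{U}(n)}\xi_{(i)}$ and $\xi_{(i)}\odot\xi_{(j)}$. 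At this point the hypothesis enters: for $\dim M=2n\ge6$ conformal flatness forces $W=0$, so $Q(\RC)$ reduces to a form in the trace-free Ricci tensor alone whose integral is again controlled topologically via Gauss--Bonnet, and for $\dim M=4$ anti-self-duality $W^+=0$ has the same effect once one uses that $J$ is compatible with the canonical orientation; in either case the right-hand side becomes a constant $T'(M)$ independent of $J$.

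Granting the identity, the theorem follows by an elementary optimisation: each $\int_M\|\xi_{(i)}\|^2\,dv\ge0$, and the sign pattern of the $a_i$ forced by the vanishing of $W$ (resp.\ $W^+$) turns the constraint $\sum_i a_i\int_M\|\xi_{(i)}\|^2\,dv=T'(M)$ into a lower bound $\int_M\|\xi\|^2\,dv\ge\mu(M)$, attained precisely when the components not permitted in the statement vanish, i.e.\ when $J$ is of type $\mathcal{W}_1\oplus\mathcal{W}_4$ for $\dim M=6$, of type $\mathcal{W}_4$ for $\dim M=4$ or $\dim M\ge8$, and of type $\mathcal{W}_2$ in case (iii). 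Hence any $J$ of the stated type realises the minimal value $n\,\mathrm{Vol}(M)+\tfrac12\mu(M)$ and is an energy minimiser; by Theorem~\ref{classhermharm} it is then in particular a harmonic almost Hermitian structure with $\Ric^*_{\mathrm{alt}}=0$, which is the precise sense in which Theorem~\ref{classhermharm} generalises the present statement. The hard part is the bookkeeping that yields the universal identity with the correct coefficients $a_i$, together with checking that conformal flatness (resp.\ anti-self-duality) is exactly the condition under which the residual term $\int_M Q(\RC)\,dv$ has the sign needed to turn that identity into the asserted lower bound; this is the step that genuinely goes beyond the variational machinery of Sections~\ref{charactgstruc} and~\ref{sect:almherm}, since minimality, unlike criticality, is not detected by the Euler--Lagrange equation alone.
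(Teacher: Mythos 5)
This statement is not proved in the paper at all: it carries the citation \cite{BHLS} in its header and is imported verbatim from Bor, Hern\'andez Lamoneda and Salvai precisely so that the authors can remark that their Theorem~\ref{classhermharm} is consistent with it (via the observation that $\Ric^*_{\mbox{\rm \footnotesize alt}}$ is a $\Un(n)$-component of the Weyl tensor, hence vanishes under conformal flatness or anti-self-duality). So there is no in-paper proof to compare yours against, and nothing in Sections~\ref{charactgstruc} or~\ref{sect:almherm} could supply one: those sections only characterise \emph{critical} points of the energy, whereas the statement asserts \emph{minimality}, which, as you yourself note, is not detected by the Euler--Lagrange equation.

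Judged on its own terms, your proposal is a strategy outline rather than a proof, and the gap is exactly where you locate it. The entire argument hinges on the existence of a universal integral identity $\sum_i a_i\int_M\|\xi_{(i)}\|^2\,dv = T(M)+\int_M Q(R)\,dv$ with a topological left-over $T(M)$ and with coefficients $a_i$ whose signs, after the Weyl tensor is removed, turn the identity into a lower bound attained exactly on the Gray--Hervella types listed in (i)--(iii). You never produce this identity: the coefficients $a_i$ are not computed, the quadratic form $Q(R)$ is not identified, and no verification is given that the trace-free Ricci contribution surviving after $W=0$ (resp.\ $W^+=0$) can really be absorbed into a $J$-independent constant --- this last point is delicate, since the Ricci tensor couples to $J$ through $\Ric^*$ and is not obviously controlled by Gauss--Bonnet alone. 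Without these computations the ``elementary optimisation'' in your last step has nothing to optimise, so the conclusion does not follow from what is written. The sketch is a reasonable description of the kind of Chern--Weil/Bochner argument used in \cite{BHLS}, but as a proof it is missing its analytic core; the only rigorous content that survives is the (correct but much weaker) observation that the structures in question are harmonic with $\Ric^*_{\mbox{\rm \footnotesize alt}}=0$, which is a consequence of minimality plus Theorem~\ref{classhermharm}, not a step towards proving it.
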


 Because $\Ric_{\mbox{\footnotesize alt}}^*$ determines certain
$\Lie{U}(n)$-component, $n \geq 2$, of the Weyl curvature tensor
$W$ on almost Hermitian  (see \cite{Falcitelli-FS:aH,FMCAS}), then
we have $\Ric_{\mbox{\footnotesize alt}}^*=0$  for almost
Hermitian $2n$-manifolds which  are locally conformal flat. In
particular, for $n=2$, if we  consider the action $\SO(4)$
determined by the volume form given  by $Vol =  \frac12 \omega
\wedge \omega$, the Weyl curvature tensor is decomposed into two
components, that is, $W= W^+ + W^-$. If $W^+=0$ ($W^-=0$), the
manifold is called {\it anti-self-dual} ({\it self-dual}). More
details can be found in \cite{Salamon,Falcitelli-FS:aH}.
 Since $\Ric_{\mbox{\footnotesize alt}}^*$
determines certain $\Lie{U}(2)$-component of $W^+$, if the
manifold is anti-self-dual, then we will also have
$\Ric_{\mbox{\footnotesize alt}}^*=0$. Therefore, it follows that
the results here presented are in agreeing with Theorem
\ref{bor-Hlam-Salva}. \vspace{2mm}

Now, we focus  attention on harmonicity as a map of almost
Hermitian structures. Results in that direction were already
obtained in \cite{Wood2}, we will complete such results by using
tools here presented. In next Lemma, $s^*$ will denote the {\it
$*$scalar  curvarture} defined by $s^* = \Ric^* (e_i, e_i)$. If
$\Ric^* (X,Y) = \frac{1}{2n} s^* \langle X,Y \rangle$, then the
almost Hermitian manifold is said to be {\it weakly $*$Einstein}.
If $s^*$ is constant, a weakly-$*$Einstein manifold is called {\it
$*$Einstein}.

 In Riemannian geometry, it is satisfied $2 d^* \Ric + d s =0$,
 where $s$ is the scalar curvature. The $*$analogue in almost
 Hermitian geometry is false. In fact, this is  clarified by the
 following two results.
\begin{lemma} \label{id:genera}
For  almost Hermitian manifolds, we have {\rm
$$
 2 d^* \Ric^{*t}(X) + ds^* (X) =   2 \langle R{(e_i, X)} ,    \xi_{Je_i} J \rangle
        -  4 \Ric^* ( X,   \xi_{e_i} e_i ) + 4 \langle \Ric^*  , \xi^{\flat}_X  \rangle,
$$}
where $\Ric^{*t}(X,Y) = \Ric^{*}(Y,X)$ and $\xi_X^{\flat} (Y,Z) =
\langle \xi_X Y, Z \rangle$. In particular, if the manifold is
weakly $\ast$Einstein, then
$$
 \frac{n-1}{n} d s^*(X) =    2 \langle R{( e_i, X)} ,    \xi_{Je_i} J
 \rangle - 2 s^* \langle \xi_{e_i} e_i , X \rangle.
$$
\end{lemma}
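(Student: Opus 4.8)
The plan is to reduce everything to a pointwise computation at a fixed $m\in M$ with a local orthonormal frame $\{e_1,\dots,e_{2n}\}$ satisfying $(\nabla e_i)_m=0$. At such a point $d^*\Ric^{*t}(X)=-(\nabla_{e_i}\Ric^{*t})(e_i,X)=-(\nabla_{e_i}\Ric^*)(X,e_i)$ and $ds^*(X)=(\nabla_X\Ric^*)(e_i,e_i)$, with $s^*=\Ric^*(e_i,e_i)$, so the whole statement amounts to evaluating
\[
-2(\nabla_{e_i}\Ric^*)(X,e_i)+(\nabla_X\Ric^*)(e_i,e_i).
\]
Writing $\Ric^*(A,B)=\langle R(A,e_k)JB,Je_k\rangle$ and differentiating by the Leibniz rule, each covariant derivative of $\Ric^*$ splits, at $m$, into a term carrying $\nabla R$ and two terms carrying $\nabla J$; for the latter I would use $(\nabla_ZJ)W=2J\xi_ZW=-2\xi_ZJW$, which is a rewriting of \eqref{torsion:xi}.

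First I would extract the obvious contribution: in $-2(\nabla_{e_i}\Ric^*)(X,e_i)$, the $\nabla J$ term in which $\nabla_{e_i}J$ hits the frame vector $e_i$ — namely $(\nabla_{e_i}J)e_i=2J\xi_{e_i}e_i$ — produces precisely $-4\Ric^*(X,\xi_{e_i}e_i)$. The remaining $\nabla J$ terms are the ones where $\xi$ acts on the \emph{contracted} frame vector; these I would reorganise using that $\xi_Z$ is skew-symmetric, the block and pair symmetries of $R$, the first Bianchi identity, and the relation $\Ric^*(JA,JB)=\Ric^*(B,A)$. Part of this reorganisation yields the term $4\langle\Ric^*,\xi_X^\flat\rangle$, and part of it feeds into the genuine curvature term $2\langle R(e_i,X),\xi_{Je_i}J\rangle$ alongside the output of the next step.

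The heart of the matter is the $\nabla R$ part,
\[
-2\langle(\nabla_{e_i}R)(X,e_k)Je_i,Je_k\rangle+\langle(\nabla_XR)(e_i,e_k)Je_i,Je_k\rangle ,
\]
to which I would apply the second Bianchi identity $(\nabla_{e_i}R)(X,e_k)=-(\nabla_XR)(e_k,e_i)-(\nabla_{e_k}R)(e_i,X)$, together with the block symmetry of $R$ and a relabelling of summation indices. In the untwisted (Riemannian) situation this is exactly the manipulation behind $2d^*\Ric+ds=0$; the twist here is that the contracted slots are entangled with $J$, so when the Bianchi rearrangement forces one to differentiate $J$-rotated frame vectors such as $Je_k$ — which, unlike $e_k$, are \emph{not} parallel at $m$ — a further $\nabla J$, hence a further $\xi$, is generated. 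Tracking these extra $\xi$-contributions and combining them with the leftover $\nabla J$ terms of the previous step produces $2\langle R(e_i,X),\xi_{Je_i}J\rangle$, while the genuinely second-order pieces cancel, as they must for a divergence-type identity; assembling the three pieces gives the asserted formula.

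The main obstacle is precisely this bookkeeping: one must keep straight which frame vectors are parallel at $m$ (the $e_i$) and which are not (the $Je_i$), so that no $\nabla J=2J\xi$ term is silently dropped during the Bianchi manipulations, and one must handle the curvature symmetries with care, since with $J$ inserted in two slots the tensor $\langle R(\cdot,\cdot)J\cdot,J\cdot\rangle$ retains the antisymmetries in the first and last pairs and the block symmetry, but \emph{not} the first Bianchi identity in the naive slots. Once the general identity is established, the weakly $*$Einstein case is immediate: substituting $\Ric^*=\tfrac1{2n}s^*\langle\cdot,\cdot\rangle$ makes $\langle\Ric^*,\xi_X^\flat\rangle$ vanish, a symmetric tensor pairing to zero with the skew tensor $\xi_X^\flat$, and turns $d^*\Ric^{*t}$ into $-\tfrac1{2n}ds^*$, so the left-hand side collapses to $\tfrac{n-1}{n}ds^*(X)$ and the $\Ric^*$-term becomes the stated multiple of $\langle\xi_{e_i}e_i,X\rangle$.
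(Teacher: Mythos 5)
Your plan is correct and follows essentially the same route as the paper: both compute $2\,d^*\Ric^{*t}+ds^*$ directly via the Leibniz rule, convert every $\nabla J$ into a $\xi$-term through $\xi=-\tfrac12 J(\nabla J)$, apply the second Bianchi identity to the $\nabla R$ contribution, and observe that the surviving second-order $\nabla_X R$ traces cancel between $2\,d^*\Ric^{*t}$ and $ds^*$, leaving exactly the three $\xi$-terms you identify (including the vanishing of $\langle\Ric^*,\xi_X^\flat\rangle$ and the collapse of $d^*\Ric^{*t}$ in the weakly $*$Einstein case). The only cosmetic difference is that the paper first rewrites $\Ric^{*t}(X,Y)=\tfrac12\langle R(e_i,Je_i)Y,JX\rangle$ so that the contracted indices sit together in one curvature block, which lightens precisely the bookkeeping hazard you flag (the non-parallel frame vectors $Je_i$) and isolates one residual term that dies for the structural reason that it pairs a skew-symmetric with a Hermitian-symmetric matrix.
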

\begin{proof}
Note that $ \Ric^{*t}(X,Y) = \frac12  \langle R{(e_i,Je_i)} Y,
JX\rangle$.  Then, we get
\begin{eqnarray*}
d^* \Ric^{*t}(X) & = & - (\nabla_{e_j} \Ric^{*t}) ( e_j ,X)
\\
&=& -\frac12  e_j \langle R{(e_i,Je_i)} X ,Je_j \rangle
   + \frac12 \langle R{(e_i,Je_i)}  \nabla_{e_j} X , Je_j \rangle
   + \frac12 \langle R{(e_i,Je_i)}   X,   J\nabla_{e_j} e_j \rangle
   \\
   &=& - \frac12  \langle ( \nabla_{e_j} R){(e_i,Je_i)} X ,Je_j \rangle
      -  \langle R{(\nabla_{e_j} e_i,Je_i)}   X , Je_j \rangle
        - \frac12 \langle R{(e_i,Je_i)}   X,   (\nabla_{e_j}J) e_j \rangle
\end{eqnarray*}
Now,  by symmetric properties of $R$ and $\xi = - \frac12 J(\nabla
J)$, it follows that
\begin{eqnarray*}
d^* \Ric^{*t}(X) & = & - \frac12  \langle ( \nabla_{e_j} R){(X
,Je_j)}  e_i,Je_i\rangle
      +  \langle R{(X , e_j)} e_i ,    \nabla_{Je_j} Je_i) \rangle
        -  \langle R{(e_i,Je_i)}   X,  J \xi_{e_j} e_j \rangle.
\end{eqnarray*}
Using second Bianchi's identity and taking
$$
 \langle R{(X , e_j)} e_i ,    \nabla_{Je_j} Je_i) \rangle =
   \langle R{(X , e_j)} e_i ,    \nabla^{\Lie{U}(n)}_{Je_j} Je_i) \rangle
 -  \langle R{(X , e_j)} e_i ,    \xi_{Je_j} Je_i) \rangle
$$
into account, we get
\begin{eqnarray*}
d^* \Ric^{*t}(X) & = & - \frac14  \langle ( \nabla_{X} R){(e_j,
Je_j )}  e_i,Je_i\rangle
      -  \langle R{(X , e_j)} ,    \xi_{Je_j} J \rangle
        -  2 \Ric^* ( X,   \xi_{e_j} e_j ) .
\end{eqnarray*}
Note that
$$
 \langle R{(X , e_j)} e_i ,    \nabla^{\Lie{U}(n)}_{Je_j} Je_i) \rangle =  \langle R{(X , e_j)} e_i , e_k \rangle
     \langle \nabla^{\Lie{U}(n)}_{Je_j} Je_i , e_k \rangle =0,
$$
because it is a scalar product of a skew-symmetric matrix by a
Hermitian symmetric matrix.

Finally, it is obtained
\begin{eqnarray} \label{uno}
\qquad 2d^* \Ric^{*t}(X) & = & - \frac12  \langle ( \nabla_{X}
R){(e_j, Je_j )}  e_i,Je_i\rangle
      - 2 \langle R{(X , e_j)} ,    \xi_{Je_j} J \rangle
        -  4 \Ric^* ( X,   \xi_{e_j} e_j ) .
\end{eqnarray}

In a  second instance, $ ds^* (X) =  \frac12  X \langle
R{(e_i,Je_i)} e_j, Je_j \rangle$. Hence, we get
\begin{eqnarray*}
 ds^* (X) & = & \frac12   \langle (\nabla_X R){(e_i,Je_i)} e_j, Je_j \rangle
  + 2 \langle R{(e_i,Je_i)} e_j, \nabla_X Je_j \rangle.
\end{eqnarray*}
But we have also that
\begin{eqnarray*}
 \langle R{(e_i,Je_i)} e_j, \nabla_X Je_j \rangle & = &  \langle R{(e_i,Je_i)} e_j, e_k \rangle \langle
 \nabla^{\Lie{U}(n)}_X Je_j , e_k \rangle - \langle R{(e_i,Je_i)} e_j, e_k \rangle \langle
 \xi_X Je_j , e_k \rangle \\
 & = &  \langle R{(e_i,Je_i)} e_j, J\xi_X e_j \rangle =  2
 \Ric^* (e_i , \xi_X e_i) =  2 \langle \Ric^*  , \xi_X  \rangle.
\end{eqnarray*}
Thus, it follows that
 \begin{equation} \label{dos}
  ds^* (X)  =  \frac12   \langle (\nabla_X
R){(e_i,Je_i)} e_j, Je_j \rangle + 4 \langle \Ric^*  , \xi_X
\rangle.
 \end{equation}
 From \eqref{uno} and \eqref{dos}, the required identity is
 obtained.
\end{proof}

\begin{theorem}
 For an almost Hermitian  $2n$-manifold $(M,\langle \cdot
,\cdot \rangle, J)$,  we have:
\begin{enumerate}
 \item[{\rm (i)}] If $M$ is of type $\mathcal{W}_1 \oplus
\mathcal{W}_2 \oplus \mathcal{W}_4$, then the almost Hermitian
structure is a harmonic map  if and only if the almost Hermitian
structure is harmonic and
\begin{eqnarray*}
\qquad  (n-1) d^* \Ric^{*t}(X) + \frac{n-1}{2} ds^* (X) & = & \Ric
( X, \xi_{e_i} e_i )
        -  (2n-1) \Ric^* ( X,   \xi_{e_i} e_i )\\
        &&  + 2 (n-1) \langle \Ric^*  , \xi^{\flat}_X  \rangle,
\end{eqnarray*}
 for all  $X \in \mathfrak{X}(M)$.
 \item[{\rm (ii)}] If $M$ is quasi-K{\"a}hler  $(\mathcal{W}_1 \oplus
\mathcal{W}_2)$, then the almost Hermitian structure  is a
harmonic map  if and only if $\Ric^*$ is symmetric and $2 d^*
\Ric^{*}+ ds^*=0$. In particular,
 if the quasi-K{\"a}hler
manifold is weakly-$\ast$Einstein, then the almost Hermitian
structure is a  harmonic map  if and only if $s^*$ is constant.

  \item[{\rm (iii)}] If $M$  is
Hermitian $(\mathcal{W}_3 \oplus \mathcal{W}_4)$, then the almost
Hermitian structure  is a harmonic map
 if and only if  $\Ric^*_{\mbox{\rm \footnotesize alt}}  =  - 2 \xi^{\flat}_{\xi_{e_i}
 e_i}$
and
$$
2 d^* \Ric^{*t}(X) + ds^* (X) +  4 \Ric^* ( X,   \xi_{e_j} e_j ) -
4 \langle \Ric^*  , \xi^{\flat}_X  \rangle =0,
$$
for all $X \in \mathfrak X (M)$. In particular:
  \begin{enumerate}
  \item[{\rm (a)$^*$}] If the exterior derivative of the  Lee form is Hermitian $($in particular, if it is closed$)$,
   then the Hermitian
  structure is a harmonic map if and only if $\Ric_{\mbox{\footnotesize \rm alt}}^* =0$ and
   $2 d^* \Ric^{*} + ds^* +  4 \xi_{e_i} e_i  \lrcorner
  \Ric^* = 0$.
  \item[{\rm (b)$^*$}] If $\Ric^*$ is symmetric,  then the Hermitian
  structure is a harmonic map if and only if   $\xi_{\xi_{e_i} e_i}=0$  and
   $2 d^* \Ric^{*} + ds^* +  4 \xi_{e_i} e_i  \lrcorner
  \Ric^* = 0$. In particular, if the manifold is weakly-$\ast$Einstein, then the Hermitian
  structure is a harmonic map if and only if $\xi_{\xi_{e_i} e_i}=0$ and
   $(n-1) ds^* +  2 s^*  \xi^{\flat}_{e_i} e_i  = 0$.
   \item[{\rm (c)$^*$}] If the manifold is balanced Hermitian
   $(\mathcal{W}_3)$, then the almost Hermitian structure is a
   harmonic map if and only if $2 d^* \Ric^{*} + ds^*=0$.
   Furthermore, if the  balanced Hermitian manifold is
   weakly-$*$Einstein, the almost Hermitian
structure is a  harmonic map  if and only if $s^*$ is constant.
    \item[{\rm (d)$^*$}] If the manifold is locally conformal K\"{a}hler
   $(\mathcal{W}_4)$, then the almost Hermitian structure is a
   harmonic map if and only if $2 d^* \Ric^{*} + ds^* +  4 \xi_{e_i} e_i  \lrcorner
  \Ric^*=0$ if and only if, for all $X \in \mathfrak{X}(M)$, $(\Ric -\Ric^{*}) (X , \xi_{e_i} e_i) =0$.
  \end{enumerate}
\end{enumerate}
\end{theorem}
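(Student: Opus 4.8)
The plan is to reduce everything to the criterion of Theorem~\ref{harmmap1}: the almost Hermitian structure determines a harmonic map precisely when it is a harmonic almost Hermitian structure \emph{and} the one-form $X\mapsto\sum_i\langle\xi_{e_i},R(e_i,X)\rangle$ vanishes identically (the remark following Theorem~\ref{harmmap1} lets us keep this as the definition even without compactness or orientability). Since Theorem~\ref{classhermharm} already records, class by class, when the structure is harmonic, the whole problem is to rewrite $\sum_i\langle\xi_{e_i},R(e_i,X)\rangle$ in terms of the Ricci-type data $\Ric$, $\Ric^{*}$, $\nabla^{\Lie{U}(n)}\xi$ and $\xi$, and then to specialise.

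First I would prove a general identity, valid on any almost Hermitian $2n$-manifold, of the shape
\[
(n-1)\,\langle\xi_{e_i},R(e_i,X)\rangle
 = (n-1)\,\langle R(e_i,X),\xi_{Je_i}J\rangle
 - (\Ric-\Ric^{*})(X,\xi_{e_i}e_i)
 + \bigl(\nabla^{\Lie{U}(n)}\xi\bigr)\mbox{-terms}.
\]
The derivation mimics that of Lemma~\ref{id:genera}: substitute $\xi=-\tfrac12 J(\nabla J)$, fix at a point a local orthonormal frame with $(\nabla_{e_i}e_j)=0$, integrate by parts, use the second Bianchi identity, and split $R(e_i,X)$ along $\lie{so}(M)=\lie{u}(n)\oplus\lie{u}(n)^{\perp}$; the first Bianchi identity is what controls the non-skew components of $\xi$ (for $\mathcal W_1$, where $\xi$ is totally skew, this is exactly Proposition~\ref{pro:skew}(ii)). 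Combining with Lemma~\ref{id:genera}, I would trade $\langle R(e_i,X),\xi_{Je_i}J\rangle$ for $2d^{*}\Ric^{*t}(X)+ds^{*}(X)$, $\Ric^{*}(X,\xi_{e_i}e_i)$ and $\langle\Ric^{*},\xi^{\flat}_{X}\rangle$, so that $\sum_i\langle\xi_{e_i},R(e_i,X)\rangle=0$ becomes an equation among $d^{*}\Ric^{*t}$, $ds^{*}$, $\Ric$, $\Ric^{*}$ and $\xi$, up to the $(\nabla^{\Lie{U}(n)}\xi)$-terms.

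The remaining work is to discharge those $(\nabla^{\Lie{U}(n)}\xi)$-terms in each Gray--Hervella class, and this is where Theorem~\ref{classhermharm}, Equation~\eqref{d2omega:part20} and Lemma~\ref{previo} enter. For part~(i), with structure of type $\mathcal W_1\oplus\mathcal W_2\oplus\mathcal W_4$, I would insert the harmonicity hypothesis in the form of Theorem~\ref{classhermharm}(i), together with \eqref{d2omega:part20}, Lemma~\ref{previo} and the identity $\xi_{(4)\,\xi_{e_i}e_i}=0$, to express $(\nabla^{\Lie{U}(n)}_{e_i}\xi)_{e_i}$ through $\Ric^{*}_{\mbox{\rm\footnotesize alt}}$, $d(\xi^{\flat}_{e_i}e_i)$ and the terms $\xi_{(1)\,\xi_{e_i}e_i},\xi_{(2)\,\xi_{e_i}e_i}$; after cancellation, the vanishing of $\sum_i\langle\xi_{e_i},R(e_i,X)\rangle$ is exactly the displayed equation. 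Parts~(ii) and~(iii) then follow by restriction: for quasi-K\"ahler structures $\xi_{e_i}e_i=0$ and, by Theorem~\ref{classhermharm}(ii), harmonicity means $\Ric^{*}_{\mbox{\rm\footnotesize alt}}=0$, so the formula collapses to $2d^{*}\Ric^{*}+ds^{*}=0$, the weakly-$*$Einstein consequence coming from the second identity in Lemma~\ref{id:genera}; for Hermitian structures one uses $\xi_{JX}JY=\xi_{X}Y$ and Theorem~\ref{classhermharm}(v) (harmonicity $\Leftrightarrow \Ric^{*}_{\mbox{\rm\footnotesize alt}}=-2\xi^{\flat}_{\xi_{e_i}e_i}$), which leaves the stated divergence condition. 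The sub-cases~(a)$^{*}$--(d)$^{*}$ are then read off by feeding in the extra hypotheses: if the exterior derivative of the Lee form is Hermitian then $(\nabla^{\Lie{U}(n)}_{e_i}\xi_{(4)})_{e_i}=0$ by Lemma~\ref{previo} and $(\nabla^{\Lie{U}(n)}_{e_i}\xi_{(3)})_{e_i}=\xi_{(3)\,\xi_{e_i}e_i}$ by~\eqref{d2omega:part20}; if $\Ric^{*}$ is symmetric then $\xi_{\xi_{e_i}e_i}=0$ by Theorem~\ref{classhermharm}(v); in the balanced case $\xi_{e_i}e_i=0$; and in the locally conformal K\"ahler case the Lee form is closed, $\Ric^{*}_{\mbox{\rm\footnotesize alt}}=0$ and $\xi_{(4)\,\xi_{e_i}e_i}=0$, which is what makes the two stated forms of the condition coincide.

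The step I expect to be the genuine obstacle is the general identity for $\sum_i\langle\xi_{e_i},R(e_i,X)\rangle$: the bookkeeping of the $J$-factors through the second-Bianchi/integration-by-parts argument, and above all the way the non-skew components of $\xi$ interact with the first Bianchi identity, must be handled carefully, since it is precisely there that the numerical coefficients appearing in parts~(i)--(iii) are produced.
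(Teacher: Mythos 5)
Your skeleton is the paper's: reduce via Theorem \ref{harmmap1} to ``harmonic $G$-structure plus $\langle\xi_{e_i},R(e_i,X)\rangle=0$'', dispose of the first condition with Theorem \ref{classhermharm}, and convert the second into Ricci-type data through Lemma \ref{id:genera}. The gap is exactly the step you yourself flag as the obstacle: your identity for $\langle\xi_{e_i},R(e_i,X)\rangle$ is only stated ``in shape'', with unspecified $\nabla^{\Lie{U}(n)}\xi$-terms, and the route you propose for deriving it (rerun the second-Bianchi/integration-by-parts argument of Lemma \ref{id:genera}, then discharge the $\nabla^{\Lie{U}(n)}\xi$-terms class by class using \eqref{d2omega:part20} and Lemma \ref{previo}) is misdirected. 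No further differential identities are needed beyond Lemma \ref{id:genera}; what remains is pointwise algebra. Concretely: (a) each Gray--Hervella component satisfies $\xi_{(k)JX}JY=\varepsilon_k\,\xi_{(k)X}Y$ with $\varepsilon_1=\varepsilon_2=-1$ and $\varepsilon_3=\varepsilon_4=+1$, so in each class of the statement the quantity $\langle R(e_i,X),\xi_{Je_i}J\rangle$ appearing in Lemma \ref{id:genera} equals $\pm\langle R(e_i,X),\xi_{e_i}\rangle$ up to a multiple of $\langle R(e_i,X),\xi_{(4)e_i}\rangle$; and (b) that last term is evaluated by contracting the explicit formula \eqref{w4expre} for $\xi_{(4)}$ against $R(e_i,X)$, which yields the purely algebraic identity
\begin{equation*}
(n-1)\,\langle \xi_{(4)e_i},R(e_i,X)\rangle=(\Ric-\Ric^{*})(X,\xi_{e_i}e_i),
\end{equation*}
with no Bianchi identity, no normal frame, no integration by parts and no leftover $\nabla^{\Lie{U}(n)}\xi$-terms. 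This identity is the only place where $\Ric$ (as opposed to $\Ric^{*}$) enters the theorem, and without it none of the displayed coefficients in (i)--(iv) can be produced; since you leave it unproved and sketch a derivation that would not lead to it, the proof is incomplete at its central point.

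Once (a) and (b) are in hand, your specialisations do go through as you describe and agree with the paper: in the quasi-K\"ahler case $\xi_{e_i}e_i=0$, $\xi_{Je_i}J=-\xi_{e_i}$, and $\langle\Ric^{*},\xi^{\flat}_X\rangle$ only detects the skew part of $\Ric^{*}$, which vanishes for harmonic quasi-K\"ahler structures by Theorem \ref{classhermharm}(ii); in the Hermitian case $\xi_{Je_i}J=\xi_{e_i}$, so Lemma \ref{id:genera} gives the displayed divergence condition verbatim; and in the locally conformal K\"ahler case the equivalence of the two stated forms of the condition is precisely identity (b) again. Note, however, that Lemma \ref{previo} and Equation \eqref{d2omega:part20} play no role in this theorem beyond their earlier use inside the proof of Theorem \ref{classhermharm}, so invoking them here to ``discharge $\nabla^{\Lie{U}(n)}\xi$-terms'' signals that the intended computation has not actually been carried out.
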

\begin{proof} All results contained in Theorem are immediate
consequences of Theorem \ref{classhermharm},  Lemma
\ref{id:genera}, and the following  consequence of the expression
for $\xi_{(4)}$ given by \eqref{w4expre}
$$
(n-1) \langle \xi_{(4) e_i} , R_{(e_i,X)} \rangle  = (\Ric
-\Ric^{*}) (X , \xi_{e_i} e_i).
$$
\end{proof}

\begin{example}{\rm
Hopf manifolds are diffeomorphic to $S^1 \times S^{2n-1}$ and
admit a locally conformal K\"{a}hler structure with parallel Lee form
$\frac{1}{2(n-1)} \xi_{e_i}^\flat e_i$ \cite{Va}. Furthermore,
$\xi_{e_i} e_i$ is nowhere zero and tangent to $S^1$. The metric
on $S^1 \times S^{2n-1}$ is the product metric of constant
multiples of the metrics on $S^1$ and $S^{2n-1}$ induced by the
respective Euclidean metrics on $\mathbb R^2$ and $\mathbb
R^{2n}$. The set $\mathcal L ( S^{2n-1})$ will consist of those
vector fields on $S^1 \times S^{2n-1}$ which are lifts of vector
fields on $S^{2n-1}$. The Riemannian curvature tensor $R$ is such
that
 \begin{gather*} \langle R{(X,Y)} Z_1
, Z_2 \rangle = k ( \langle X,Z_1 \rangle \langle Y,Z_2 \rangle -
\langle X,Z_2 \rangle \langle Y,Z_1 \rangle), \qquad R{(X,
\xi_{e_i}e_i)} =0,
 \end{gather*}
for all $X,Y,Z_1,Z_2 \in \mathcal L ( S^{2n-1})$, where $k$ is a
constant. Therefore, $\langle R{(e_i ,  \xi_{e_i} e_i)} ,
\xi_{e_i} \rangle =0$. Moreover, using the expression given by
\eqref{w4expre}, for all $X \in \mathcal L ( S^{2n-1})$, we have
 $$
 \langle R{(e_i , X)} , \xi_{e_i}
\rangle = 2k  \langle \xi_{e_i} e_i , X\rangle  =0.
 $$
 Additionally, it can be checked that
 $$
 \frac{n-1}{k} \langle \xi_X , R{(Y,Z)} \rangle = - J X^\flat \wedge J  \xi_{e_i}^\flat e_i
 (Y,Z),
 $$
 for all $X,Y,Z$ orthogonal to $ \xi_{e_i} e_i$. Therefore, the almost
 Hermitian structure is not horizontally geodesic. As a consequence, it is also not a flat structure.

Finally, using again the expression  \eqref{w4expre} and the fact
that $\xi_{e_i} e_i$ is parallel, it is obtained
\begin{eqnarray*} 
2(n-1)^2    (\nabla_X  \xi)_X
   & = &
      J \xi^\flat_{e_i} e_i (X) \; (X^\flat \otimes J \xi_{e_i} e_i
      -  J \xi^\flat_{e_i} e_i  \otimes  X + JX^\flat \otimes    \xi_{e_i} e_i
 -       \xi^\flat_{e_i} e_i \otimes  JX).
\end{eqnarray*}
Note that this expression is not vanished for all $X$.

In conclusion}, the locally conformal K\"ahler structure on $S^1
\times
 S^{2n-1}$ is a harmonic map which is neither horizontally geodesic, nor vertically
 geodesic.
\end{example}

\begin{example}
{\rm In general, locally conformal K\"{a}hler structures are not
harmonic maps. In fact, one can consider the K\"ahler structure on
$\mathbb R^{2n}$ determined by the Euclidean metric $\langle \cdot
, \cdot \rangle$ and the standard almost complex structure $J$. If
we do a conformal change of metric  using a function $f$ on
$\mathbb R^{2n}$, the new metric $\langle \cdot , \cdot \rangle_o
= e^f \langle \cdot , \cdot \rangle$ and $J$ determine a new
almost Hermitian structure which is locally conformal K\"ahler.
The  Lee form for the new structure is $df$ and the Riemannian
curvature tensor is given by
\begin{eqnarray*}
 -2 e^{-f} \langle R_o (X,Y)Z,W \rangle_o & = &   L(X,Z) \langle Y, W \rangle +
L(Y,W) \langle X , Z \rangle \\
&&
 - L(X,W) \langle Y, Z \rangle - L(Y,Z) \langle X, W \rangle
\\
&& + \frac{\|d f \|^2}{2} \{ \langle X, Z \rangle \langle Y, W
\rangle - \langle Y, Z \rangle \langle X, W \rangle \},
\end{eqnarray*}
where $L(X,Y)= (\nabla_X df) (Y) - \frac12 df(X) df(Y)$ and
$\nabla$ is the Levi-Civita connection associated to $\langle
\cdot , \cdot \rangle$ (see \cite{Tricerri-Vanhecke:aH}). If
$\xi_o$ denotes the intrinsic torsion of the structure $(J,
\langle \cdot , \cdot \rangle_o)$, an straightforward computation
shows that
\begin{equation*}
16 e^f  \langle R_o (e_{o \,i} ,X) , \xi_{o e_{o \,i}} \rangle_o =
- \frac{2n-3}{2}  d (\| df \|^2)(X) + d^* (df) df(X)
 +  ( \nabla_{JX} df )(J \mbox{grad}\,f ),
\end{equation*}
where $\{ e_{o \,1} , \dots , e_{o \,2n} \}$ is an orthonormal
basis for vectors with respect to $\langle \cdot , \cdot
\rangle_o$ and the terms in the right side, the norm $\| \cdot
\|$, $\mbox{grad}$, etc.,  are considered  with respect to the
Euclidean metric $\langle \cdot , \cdot \rangle$. Therefore, it is
not hard to find  functions $f$ such that $\langle R_o (e_{o \,i}
,X) , \xi_{o e_{o \,i}} \rangle_o \neq 0$. For instance, if $f=
\sin x_1$, then $\langle R_o (e_{o \,i} ,X) , \xi_{o e_{o \,i}}
\rangle_o = \frac{n-1}{8}\,  e^{-\sin x_1} \sin x_1 \, \cos x_1 \,
d x_1$.

If we take the function $f$ such that $(x^{i}\comp f)(x) =
(x^{i}\comp f)(x + 2\pi),$ $i=1,\dots ,2n,$ then
$\langle\cdot,\cdot\rangle_{o}$ determines a Riemannian metric on
the torus $T^{2n} = S^{1}\times \dots \times S^{1}$ and the
natural projection of $\mathbb R^{2n}$ on $T^{2n}$ becomes into a
local isometry. Hence, we also get} locally conformal K\"{a}hler
structures which are not harmonic maps on the torus
$(T^{2n},\langle\cdot,\cdot\rangle_{o}).$
\end{example}

\subsection{Nearly K\"{a}hler manifolds} For completeness, here we will
give a detailed and self-contained explanation of the situation
for nearly K\"{a}hler manifolds. Thus, we will recover results
already known originally proved, some of them, by Gray and,
others, by Wood. However, we will display alternative proofs in
terms of the intrinsic torsion $\xi$. Additionally, it is also
shown that, for nearly K\"{a}hler manifolds, $\xi$ is parallel
with respect to the minimal connection $\nabla^{\Lie{U}(n)}$,
i.e.,  $\nabla^{\Lie{U}(n)} \xi =0$. This last result is
originally due to Kirichenko \cite{Kir}.

The intrinsic torsion $\xi$ of a nearly K\"{a}hler manifold is
characterised by the condition $\xi_X Y= - \xi_Y X$. Because this
property is preserved by the action of $\Lie{O}(2n)$, then we have
also  $(\nabla_X \xi)_Y Z = - (\nabla_X  \xi)_Z Y$ and $(\nabla_X
\xi)^{\Lie{U}(n)}_Y Z = - (\nabla^{\Lie{U}(n)}_X \xi)_Z Y$.
Moreover, with respect to the almost complex structure $J$, it is
also satisfied $\xi_{JX} JY= - \xi_X Y$. Therefore, $(\nabla_X
\xi)^{\Lie{U}(n)}_{JY} JZ = - (\nabla^{\Lie{U}(n)}_X \xi)_{Y} Z$.

For nearly K\"{a}hler manifolds, Gray \cite{Gray:spheres} showed that
the following identities are satisfied
\begin{eqnarray} \label{ecxy}
\langle R{(X,Y)} X, Y \rangle - \langle R{(X,Y)} JX, JY \rangle  & = & 4 \| \xi_X Y \|^2  \\
\langle R{(JX,JY)} JZ, JW \rangle & = & \langle R{(X,Y)} Z, W
\rangle. \label{ecjxjy}
\end{eqnarray}
In fact, since $\langle (\nabla_{X} \xi)_Y X , Y \rangle = 0 =
\langle (\nabla_{Y} \xi)_X X , Y \rangle$, it is immediate that
$$
\langle R{(X,Y)\lie{u}(n)^{\perp}} X , Y \rangle  = \frac12 \left(
\langle R{(X,Y)} X , Y \rangle - \langle R{(X,Y)} JX , JY \rangle
\right) = 2 \langle [\xi_X , \xi_Y] X , Y \rangle.
$$
From this, \eqref{ecxy} follows. Also \eqref{ecxy} follows from
Proposition  \ref{pro:skew}, because $[ \lie{u}(n)^{\perp},
\lie{u}(n)^{\perp}] \subseteq  \lie{u}(n)$.

 For \eqref{ecjxjy}.  Using
\eqref{ecxy}, it is not hard to prove $\langle R{(JX,JY)} JX, JY
\rangle  = \langle R{(X,Y)} X, Y \rangle$. Then, by linearizing,
we will have \eqref{ecjxjy}.
\begin{theorem} Nearly K\"{a}hler structures are vertically geodesic
harmonic maps. Moreover, for nearly K\"{a}hler manifolds,  we have
\begin{eqnarray} \label{ecxyzw}
\langle R{(X,Y)} Z, W \rangle - \langle R{(X,Y)} JZ, JW \rangle  & = & 4 \langle \xi_X Y , \xi_Z W \rangle, \\
 \nabla^{\Lie{U}(n)}_X \xi & = &0. \label{nparallel}
\end{eqnarray}
 In particular, if the nearly K\"{a}hler structure is
flat, then is K\"{a}hler.
\end{theorem}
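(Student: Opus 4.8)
The plan is to prove the curvature identity \eqref{ecxyzw} and the parallelism \eqref{nparallel} first --- they are equivalent for nearly K\"ahler manifolds --- since the two remaining assertions (vertically geodesic harmonic map; flat implies K\"ahler) then follow quickly from results already established. Recall that $\xi=\xi^{\Lie{U}(n)}$ is totally skew, so $\psi(X,Y,Z):=\langle\xi_XY,Z\rangle$ is a $3$-form; since $\Lambda^3\mbox{T}^{*}M$ is preserved by every metric connection, $\nabla^{\Lie{U}(n)}\xi$ is of the same type, so that $\langle(\nabla^{\Lie{U}(n)}_X\xi)_YZ,W\rangle$ is skew-symmetric in $Y,Z,W$. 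On the other hand, Lemma \ref{curvphi1} together with $[\lie{u}(n)^{\perp},\lie{u}(n)^{\perp}]\subseteq\lie{u}(n)$ gives, in the nearly K\"ahler case, $R(X,Y)_{\lie{u}(n)^{\perp}}=(\nabla^{\Lie{U}(n)}_X\xi)_Y-(\nabla^{\Lie{U}(n)}_Y\xi)_X+2\xi_{\xi_XY}$, and also $2\langle R(X,Y)_{\lie{u}(n)^{\perp}}Z,W\rangle=\langle R(X,Y)Z,W\rangle-\langle R(X,Y)JZ,JW\rangle$, because under the $J$-twist the $\lie{u}(n)$-part of $R(X,Y)$ cancels and the $\lie{u}(n)^{\perp}$-part doubles. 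Since $\langle\xi_{\xi_XY}Z,W\rangle=\langle\xi_XY,\xi_ZW\rangle$ by the total skewness of $\psi$, it follows that \eqref{ecxyzw} holds if and only if $(\nabla^{\Lie{U}(n)}_X\xi)_Y=(\nabla^{\Lie{U}(n)}_Y\xi)_X$; but a tensor that is symmetric in $X,Y$ while $\langle(\nabla^{\Lie{U}(n)}_X\xi)_YZ,W\rangle$ is skew-symmetric in $Y,Z,W$ must vanish, so this is exactly \eqref{nparallel}.

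It therefore suffices to prove \eqref{ecxyzw}, which is a polarization of Gray's identity \eqref{ecxy}. The subtlety is that, regarding both sides as symmetric bilinear forms on $\Lambda^2\mbox{T}^{*}M$, the data in \eqref{ecxy} only determine the component of each side lying in the space of algebraic curvature tensors (an algebraic curvature tensor is determined by its sectional curvatures), while the left-hand side of \eqref{ecxyzw} carries a nonzero $\Lambda^4$-component --- equivalently, $\langle\xi_XY,\xi_ZW\rangle$ does not satisfy the first Bianchi identity (on $S^6$ the cyclic sum is a nonzero multiple of $\omega\wedge\omega$). To match this extra piece I would evaluate the cyclic sum $\langle R(X,Y)JZ,JW\rangle+\langle R(Y,Z)JX,JW\rangle+\langle R(Z,X)JY,JW\rangle$, using the first Bianchi identity for the Levi-Civita connection applied to triples involving $J$, the identity \eqref{ecjxjy}, and the fact that $\nabla\omega$ is a $3$-form; in terms of the intrinsic torsion this amounts to the second Bianchi identity for the minimal connection $\nabla^{\Lie{U}(n)}$ --- whose torsion is the totally skew tensor $2\xi$ --- together with $(\nabla^{\Lie{U}(n)}_{e_i}\xi)_{e_i}=0$, which \eqref{d2omega:part20} yields at once for type $\mathcal{W}_1$ since there $\xi=\xi_{(1)}$. \textbf{This Bianchi bookkeeping is the main obstacle}: it is Gray's curvature computation, and its corollary \eqref{nparallel} is Kirichenko's theorem.

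Granting \eqref{ecxyzw}, and hence \eqref{nparallel}, the remaining statements are immediate. From $\nabla^{\Lie{U}(n)}\xi=0$ and $\nabla^{\Lie{U}(n)}=\nabla+\xi$ we get $(\nabla_X\xi)_X=(\nabla^{\Lie{U}(n)}_X\xi)_X-[\xi_X,\xi_X]+\xi_{\xi_XX}=0$, so by Theorem \ref{verthor}(a) the nearly K\"ahler structure is vertically geodesic, hence a harmonic $\Lie{U}(n)$-structure; and since $\xi$ is totally skew, Proposition \ref{pro:skew}(ii) shows it is moreover a harmonic map. Finally, if the structure is flat then \eqref{ecxy} forces $\|\xi_XY\|^{2}=0$ for all $X,Y$, i.e.\ $\xi=0$, so the structure is K\"ahler. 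In the write-up I would record these reductions first and devote the bulk of the argument to the Bianchi computation proving \eqref{ecxyzw}.
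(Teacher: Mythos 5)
Your reduction of the theorem to the single identity \eqref{ecxyzw} is sound, and your passage from \eqref{ecxyzw} to \eqref{nparallel} is in fact slightly cleaner than the paper's: you use that $\langle(\nabla^{\Lie{U}(n)}_X\xi)_YZ,W\rangle$ is automatically skew in $(Y,Z,W)$ (since $\nabla^{\Lie{U}(n)}$ is metric and $\langle\xi_\cdot\cdot,\cdot\rangle$ is a three-form), so symmetry in $(X,Y)$ forces vanishing, whereas the paper derives $(\nabla^{\Lie{U}(n)}_X\xi)_X=0$ separately from the vertically geodesic property and then combines it with the symmetry. Your derivations of the vertically geodesic property, of the harmonic-map conclusion via Proposition \ref{pro:skew}(ii), and of ``flat implies K\"{a}hler'' via \eqref{ecxy} are all correct and agree with the paper.

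The genuine gap is that \eqref{ecxyzw} itself is never proved: you explicitly defer ``Gray's curvature computation,'' and that computation is precisely the content of the paper's proof. Moreover, the ingredients you list would not obviously suffice: the trace identity $(\nabla^{\Lie{U}(n)}_{e_i}\xi)_{e_i}=0$ coming from \eqref{d2omega:part20} is a single contraction and cannot control the full $\Lambda^4$-component that you yourself identify as the obstruction, and the second Bianchi identity plays no role. What the paper actually does is start from the pointwise identity $\tfrac12\left(\langle R(X,Y)Z,W\rangle-\langle R(X,Y)JZ,JW\rangle\right)=\langle(\nabla^{\Lie{U}(n)}_X\xi)_YZ,W\rangle-\langle(\nabla^{\Lie{U}(n)}_Y\xi)_XZ,W\rangle+2\langle\xi_XY,\xi_ZW\rangle$ (your equation too), apply to it the symmetrisation ${\sf s}$ followed by the Bianchi-type operator ${\sf b}$, and use the first Bianchi identity, \eqref{ecjxjy}, and the skew-symmetry of $(\nabla^{\Lie{U}(n)}_X\xi)_YZ$ in $Y,Z$ to eliminate all $\nabla^{\Lie{U}(n)}\xi$ terms, arriving at \eqref{sbianchi}; it then performs two successive $J$-substitutions ($Z,W\to JZ,JW$, then $Y,Z\to JY,JZ$) and takes a linear combination with coefficient $1/5$ to isolate $\langle R(X,Y)Z,W\rangle-\langle R(X,Y)JZ,JW\rangle$. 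Without this (or an equivalent argument) the theorem is not established; and, as your own setup makes clear, trying to get \eqref{nparallel} directly from Lemma \ref{curvphi1} is circular, since the part of $(\nabla^{\Lie{U}(n)}_X\xi)_Y$ antisymmetric in $(X,Y)$ equals $\tfrac12 R(X,Y)_{\lie{u}(n)^{\perp}}-\xi_{\xi_XY}$, whose vanishing is exactly \eqref{ecxyzw}.
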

\begin{remark}{\rm Equation \eqref{ecxyzw} is due to Gray
\cite{Gray:nearly}. On the other hand,  Wood proved  in
\cite{Wood2} that nearly K\"{a}hler structures are vertically geodesic
harmonic maps.}
\end{remark}
\begin{proof} Since $\xi_X Y = - \xi_Y X$ and $\nabla \xi =
\nabla^{\Lie{U}(n)}\xi - \xi \xi$, it is direct to show that
\begin{eqnarray} \label{rjjxixi}
\langle R{(X,Y)\lie{u}(n)^{\perp}} Z  , W \rangle
 & = &  \frac12
\left( \langle R{(X,Y)} Z , W \rangle - \langle R{(X,Y)} JZ , JW
\rangle \right) \\
& = & \langle (\nabla^{\Lie{U}(n)}_X \xi)_Y Z , W \rangle -
\langle (\nabla^{\Lie{U}(n)}_Y \xi)_X Z , W \rangle
   + 2 \langle  \xi_X Y , \xi_{Z} W \rangle. \nonumber
\end{eqnarray}
Now, we consider the map ${\sf s} : \Lambda^2 T^* M \otimes
\Lambda^2 T^* M \to S^2 (\Lambda^2 T^* M)$ defined by ${\sf
s}(a\otimes b) = a \otimes b + b \otimes a$ and the map ${\sf b} :
S^2 (\Lambda^2 T^* M) \to S^2 (\Lambda^2T^* M)$ defined by
$$
{\sf b}(\Upsilon)(X,Y,Z,W) = 2 \Upsilon(X,Y,Z,W) -
\Upsilon(Z,X,Y,W) - \Upsilon(Y,Z,X,W).
$$
Applying the composition ${\sf b} \circ {\sf s}$ to both sides of
Equation \eqref{rjjxixi} and, then, making use of \eqref{ecjxjy}
and first Bianchi's identity, we will obtain
\begin{eqnarray}  \label{sbianchi}
&& \\
 3 \langle R{(X,Y)} Z  , W \rangle - 2 \langle R{(X,Y)} JZ , JW
\rangle && \nonumber \\
  +  \langle R{(Z, X)} JY  , JW \rangle
    +  \langle R{(Y, Z)} JX  , JW \rangle
  & = &
    8 \langle \xi_{Z} W , \xi_X Y \rangle
   +4 \langle \xi_{Y} W , \xi_X Z \rangle
   -4 \langle \xi_{X} W , \xi_Y Z \rangle. \nonumber
\end{eqnarray}
Note that we have also taken $(\nabla_X^{\Lie{U}(n)} \xi)_Y Z = -
(\nabla_X^{\Lie{U}(n)} \xi)_Z Y $ into account. Now, if  we
replace $Z$ and $W$ by $JZ$ and $JW$, subtract the result from
\eqref{sbianchi} and  use $\xi_{JX} JY= - \xi_X Y$, then we get
\begin{eqnarray} \label{ultimopaso}
5 \langle R{(X,Y)} Z  , W \rangle - 5\langle R{(X,Y)} JZ , JW
\rangle   & &
\\
 -  \langle R{(X,JY)} Z  , JW \rangle  -  \langle R{(X, JY)} JZ  , W \rangle
  & = &
   16  \langle \xi_{X} Y , \xi_Z W \rangle. \nonumber
\end{eqnarray}
Finally,  replacing in \eqref{ultimopaso} $Y$ and $Z$ by $JY$ and
$JZ$, multiplying by $1/5$ the resulting equation  and adding the
final result to \eqref{ultimopaso}, the required identity
\eqref{ecxyzw} is obtained. \vspace{2mm}

Now, from the following identity
$$
\frac12 \left( \langle R{(X,Y)} X, Z \rangle - \langle R{(X,Y)}
JX, JZ \rangle \right) = \langle (\nabla_{X} \xi)_Y X , Z \rangle
- \langle (\nabla_{Y} \xi)_X X , Z \rangle +  2 \langle [\xi_X ,
\xi_Y] X , Z \rangle,
$$
using \eqref{ecxyzw}, we get $(\nabla_{X} \xi)_X = 0$. Hence the
nearly K\"{a}hler structure is vertically geodesic. \vspace{2mm}

For \eqref{nparallel}. Since
\begin{eqnarray*}
 \langle R{(X,Y)} Z , W \rangle - \langle R{(X,Y)} JZ , JW
\rangle & = & 2 \langle (\nabla^{\Lie{U}(n)}_X \xi)_Y Z , W
\rangle - 2 \langle (\nabla^{\Lie{U}(n)}_Y \xi)_X Z , W \rangle
\\
&& + 4 \langle \xi_X Y , \xi_{Z} W \rangle
\\ & = & 4 \langle \xi_X
Y , \xi_{Z} W \rangle,
\end{eqnarray*}
we have $(\nabla^{\Lie{U}(n)}_X \xi)_Y = (\nabla^{\Lie{U}(n)}_Y
\xi)_X$. Moreover, from the identity
\begin{eqnarray*}
\langle  (\nabla_X \xi)_Y  Z , W \rangle
 & = & \langle (\nabla^{\Lie{U}(n)}_X \xi)_Y Z , W \rangle   + \langle \xi_{Z} W , \xi_X Y \rangle
   - \langle
  \xi_X \xi_Y Z , W \rangle   + \langle \xi_Y \xi_X Z , W \rangle,
\end{eqnarray*}
taking $(\nabla_{X} \xi)_X = 0$ into account, it follows
$(\nabla^{\Lie{U}(n)}_{X} \xi)_X = 0$.  Therefore,
$(\nabla^{\Lie{U}(n)}_X \xi)_Y = (\nabla^{\Lie{U}(n)}_Y \xi)_X = -
(\nabla^{\Lie{U}(n)}_X \xi)_Y =0$.

 The final remark contained in Theorem follows from Proposition
 \ref{pro:skew} (i).
\end{proof}

\end{document}